\documentclass[10pt,letterpaper]{amsart}

%%%%%%%%%%%%%%%%%%%%%%%%%%%%%%%%
%% My Preambles
%%%%%%%%%%%%%%%%%%%%%%%%%%%%%%%%
\usepackage{graphicx,hyperref}
\usepackage[hmargin=1in, vmargin=1in]{geometry}
\renewcommand{\theequation}{\thesection.\arabic{equation}}

\theoremstyle{plain}
\newtheorem*{main}{Main Theorem}
\newtheorem{thm}{Theorem}[section]
\newtheorem{lem}{Lemma}[section]
\newtheorem{prop}[lem]{Proposition}

\theoremstyle{definition}
\newtheorem{defn}{Definition}
\newtheorem{rmk}{Remark}

\renewcommand{\[}{\begin{equation}\notag\begin{aligned}}
\renewcommand{\]}{\end{aligned}\end{equation}}
\newcommand{\beq}[1]{\begin{equation}\label{#1}\begin{aligned}}
\newcommand{\beqeps}[1]%
  {\addtocounter{equation}{1}\begin{equation}\label{#1}\tag{$\theequation\epsilon$}\begin{aligned}}

\newcommand{\p}{\begin{pmatrix}}
\newcommand{\pp}{\end{pmatrix}}

\newcommand{\din}{\mathrm{in}}
\newcommand{\dout}{\mathrm{out}}

\newcommand{\myflow}[1]{\underset{\eqref{#1}}{\bullet}}

%%%%%%%%%%%%%%%%%%%%%%%%%%%%%%%%
%% Front Matters
%%%%%%%%%%%%%%%%%%%%%%%%%%%%%%%%
\author{Ting-Hao Hsu}
\address{Department of Mathematics\\ The Ohio State University\\ Columbus, OH 43210}
\email{hsu.296@osu.edu}
\title{Viscous  singular shock profiles for the Keyfitz-Kranzer system}

\keywords{
Conservation laws;
Singular shocks;
Viscous profiles;
Dafermos regularization;
Geometric Singular Perturbation Theory.
}
\subjclass[2010]{35L65, 35L67, 34E15, 34C37}

%%%%%%%%%%%%%%%%%%%%%%%%%%%%%%%%
%% Abstract
%%%%%%%%%%%%%%%%%%%%%%%%%%%%%%%%
\begin{document}
\maketitle
\begin{center}
\today
\end{center}

\begin{abstract}
It was shown by Schecter \cite{Schecter:2004},
using the methods of Geometric Singular Perturbation Theory,
that the Dafermos regularization $u_t+f(u)_x= \epsilon tu_{xx}$
for the Keyfitz-Kranzer system
admits an unbounded family of solutions.
Inspired by that work,
in this paper we provide a more intuitive approach
which leads to a stronger result.
In addition to the existence of viscous profiles,
we also prove the weak convergence
and show that the maximum of the solution is of order $\epsilon^{-2}$.
This asymptotic behavior is distinct from
that obtained in the author's recent work \cite{Hsu:2015a})
on a system modeling two-phase fluid flow,
for which the maximum of the viscous solution is of order $\exp(\epsilon^{-1})$.
\end{abstract}

%%%%%%%%%%%%%%%%%%%%%%%%%%%%%%%%%%%
\section{Introduction}
\label{sec_intro}
%%%%%%%%%%%%%%%%%%%%%%%%%%%%%%%%%%%
The Keyfitz-Kranzer system \beq{claw_u12}
  &u_{1,t}+ (u_1^2-u_2)_x= 0\\
  &u_{2,t}+ (\tfrac13u_1^3-u_1)_x= 0
\] was first introduced in \cite{Keyfitz:1989,Kranzer:1990}.
It is a strictly hyperbolic, genuinely nonlinear system of conservation laws.
A significant feature is that this model
provides an example for \emph{singular shocks}.
A singular shock,
roughly speaking,
is a measure which contains delta functions
and is the weak limit of some approximate solutions.
For details of the definition, we refer to \cite{Sever:2007,Keyfitz:2011}.

The existence of singular shocks for \eqref{claw_u12}
was proved by Keyfitz and Kranzer \cite{Keyfitz:1995}.
In that work, 
for certain Riemann data \beq{ic_riemann_u12}
  (u_1,u_2)(x,0)
  = \begin{cases}
    (u_{1L},u_{2L}),&x<0,\\
    (u_{1R},u_{2R}),&x>0,
  \end{cases}
\]
they construct approximate solutions of
the regularized system via Dafermos regularization \beqeps{dafermos_u12}
  &u_{1,t}+ (u_1^2-u_2)_x= \epsilon t u_{1,xx}\\
  &u_{2,t}+ (\tfrac13u_1^3-u_1)_x=  \epsilon t u_{2,xx}.
\]
In particular, they proved that there are approximate solution of \eqref{dafermos_u12}
that converges to a step function away from the discontinuity as $\epsilon\to 0$,
and approaches a combination of delta functions near the discontinuity.

A family of exact solutions of \eqref{dafermos_u12}, rather than approximate solutions,
is called a \emph{viscous profile} of \eqref{claw_u12}.
The existence of viscous profiles of \eqref{claw_u12}
was proved in \cite{Schecter:2004}
using \emph{Geometric Singular Perturbation Theory} (GSPT).
In that work,
existence of solutions of \eqref{dafermos_u12} and \eqref{ic_riemann_u12} were proved,
and the solutions approach infinity near the discontinuity as $\epsilon\to 0$,
but convergence of solutions was not considered.
We enhance that pioneering work in the following respects:
First, we simplify the process of blowing-up in \cite{Schecter:2004},
and construct solutions in a more intuitive way.
Second, we prove the weak convergence of the solutions,
which confirms the conjecture in \cite{Kranzer:1990}.

The system \eqref{claw_u12} can be derived
from a single space dimensional model for isentropic gas dynamics equations \beq{claw_gas}
  &\rho_t+ (\rho u)_x= 0\\
  &(\rho u)_t+ (\rho u^2+ \rho^\gamma)_x= 0
\] with $\gamma=1$,
which corresponds to isothermal gas dynamics.
By subtracting $u$ times the first equation in \eqref{claw_gas}
from the second equation,
one obtains \eqref{claw_u12}
with $u_1= u$
and $u_2= \tfrac12u^2- \log\rho$ (see \cite{Keyfitz:2011})      .
This means that \eqref{claw_u12} is equivalent to
the isothermal gas dynamics \eqref{claw_gas} for smooth solutions,
but conservation of mass and momentum has been replaced by
conservation of velocity and a quantity that is an entropy for the original system.

The system \eqref{claw_gas} with
any $\gamma$ between $1$ and $5/3$ was considered in \cite{Keyfitz:2012},
and the existence of viscous profiles for singular shock was also proved.
Some other generalizations of \eqref{claw_u12}
were systematically analyzed in \cite{Sever:2007}.

In Section \ref{sec_main}, we state our main result.
In Sections \ref{sec_cpt}
we sketch the construction of the solutions.
In Section \ref{sec_GSPT},
we recall some tools in geometric singular perturbation theory,
including Fenichel's Theorems and the Exchange Lemma.
In Sections \ref{sec_singular_config}
we verify that the conditions of GSPT for our construction.
The proof for the Main Theorem is given in Section \ref{sec_pf_main}.

%%%%%%%%%%%%%%%%%%%%%%%%%%%%%%%%%%%
\section{Main Result}
\label{sec_main}
%%%%%%%%%%%%%%%%%%%%%%%%%%%%%%%%%%%
In standard notation for conservation laws,
we write \eqref{claw_u12} as
\beq{claw_u}
  u_t+f(u)_x=0,
\] where $u=(\beta,v)$,
and write Riemann data for Riemann problems
in the form \beq{bc_riemann}
  u(x,0)= u_L+ (u_R-u_L) \mathrm{H}(x),
\]
where $\mathrm{H}(x)$ is the step function taking value $0$ if $x<0$; $1$ if $x>0$.

We study the systems that approximate \eqref{claw_u}
via the Dafermos regularization: \beqeps{dafermos_u}
  u_t+f(u)_x=\epsilon t u_{xx}
\] for small $\epsilon> 0$.
Using the self-similar variable $\xi = x/t$,
the system is converted to \beqeps{dafermos_similar}
  -\xi \frac{d}{d\xi} u+ \frac{d}{d\xi}\big(f(u)\big)
  = \epsilon \frac{d^2}{d\xi^2}u,
\] and the initial condition \eqref{bc_riemann} becomes \beq{bc_u_infty}
  u(-\infty)=u_L,\; u(+\infty)=u_R.
\]
The system \eqref{dafermos_similar} is equivalent to \beqeps{dafermos_similar_xi}
  &-\epsilon u_\xi= f(u)- \xi u- w\\
  &w_\xi= - u\\
\] or, up to a rescaling of time, \beqeps{sf_u}
  &\dot u= f(u)- \xi u- w\\
  &\dot{w}= -\epsilon u\\
  &\dot\xi= \epsilon.
\] The time variable in \eqref{sf_u}
is implicitly defined by the equation of $\dot\xi$.
When $\epsilon=0$, \eqref{sf_u} is reduced to \beq{fast_u}
  &\dot u= f(u)-\xi u- w\\
  &\dot{w}= 0,\;
  \dot\xi= 0.
\]

Returning to the $(u_1,u_2)$ notation, 
the system \eqref{sf_u} is written as \beqeps{sf_u12}
  &\dot{u}_1= u_1^2-u_2-\xi u_1-w_1\\
  &\dot{u}_2= \tfrac13u_1^3-u_1-\xi u_2- w_2\\
  &\dot{w}_1=- \epsilon u_1,\;
  \dot{w}_2= -\epsilon u_2,\;
  \dot\xi= \epsilon.
\] and \eqref{fast_u} becomes \beq{fast_u12}
  &\dot{u}_1= u_1^2-u_2-\xi u_1-w_1\\
  &\dot{u}_2= \tfrac13u_1^3-u_1-\xi u_2- w_2\\
  &\dot{w}_1=0,\;
  \dot{w}_2= 0,\;
  \dot\xi= 0.
\] At any equilibrium $u_0=(u_{10},u_{20})$ of \eqref{fast_u12},
the eigenvalues for the linearized system are \beq{def_lambda_pm}
  \lambda_-(u_0)= u_{10}-1,\quad
  \lambda_+(u_0)= u_{10}+1.
\]

\begin{figure}[t]
\centering
\begin{parbox}{.48\textwidth}{\centering
\includegraphics[trim = 2cm 6.5cm 2cm 8.2cm, clip, width=.46\textwidth]{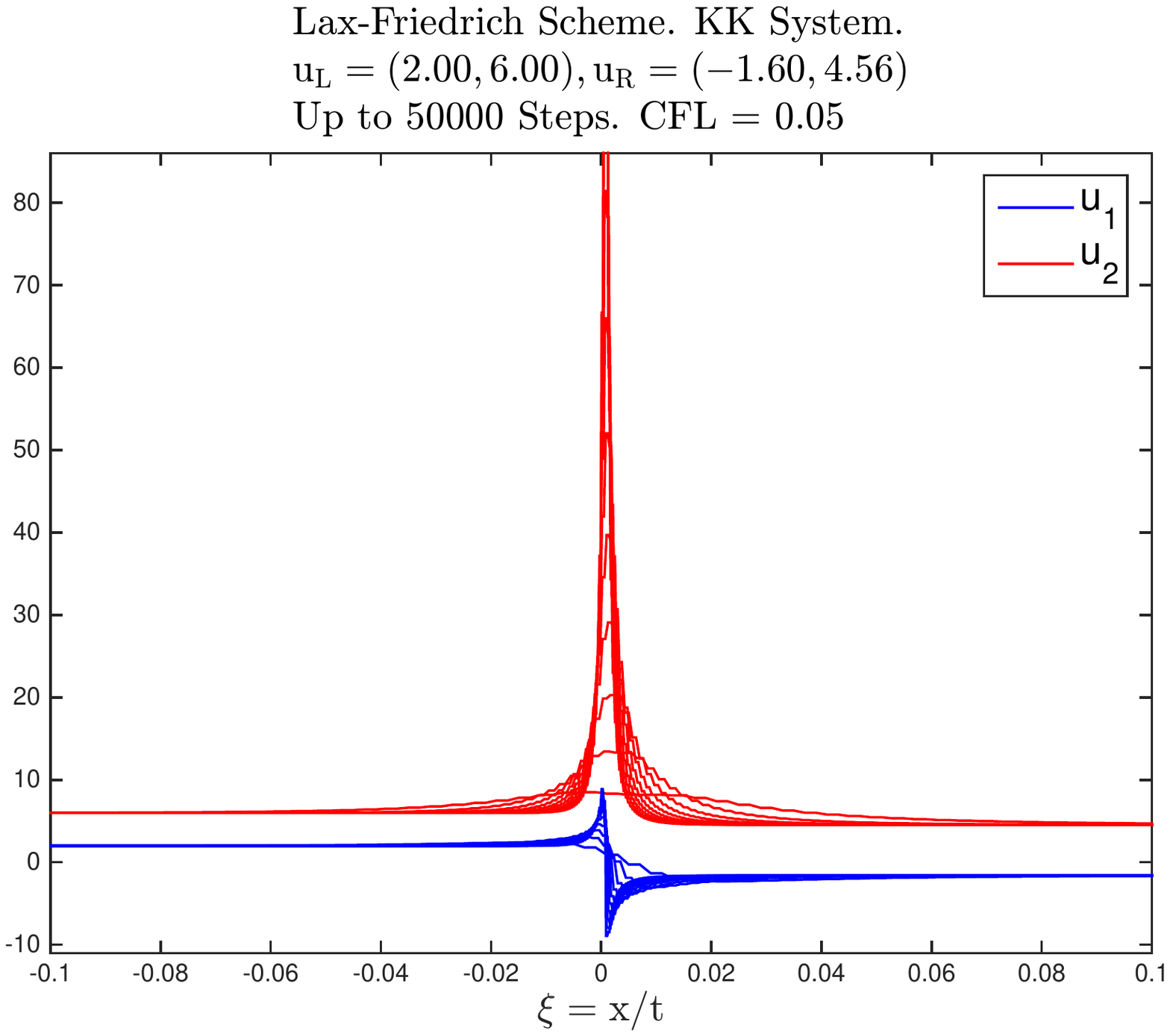}
}\end{parbox}
\begin{parbox}{.48\textwidth}{\centering
\includegraphics[trim = 2cm 6.5cm 2cm 8.2cm, clip, width=.46\textwidth]{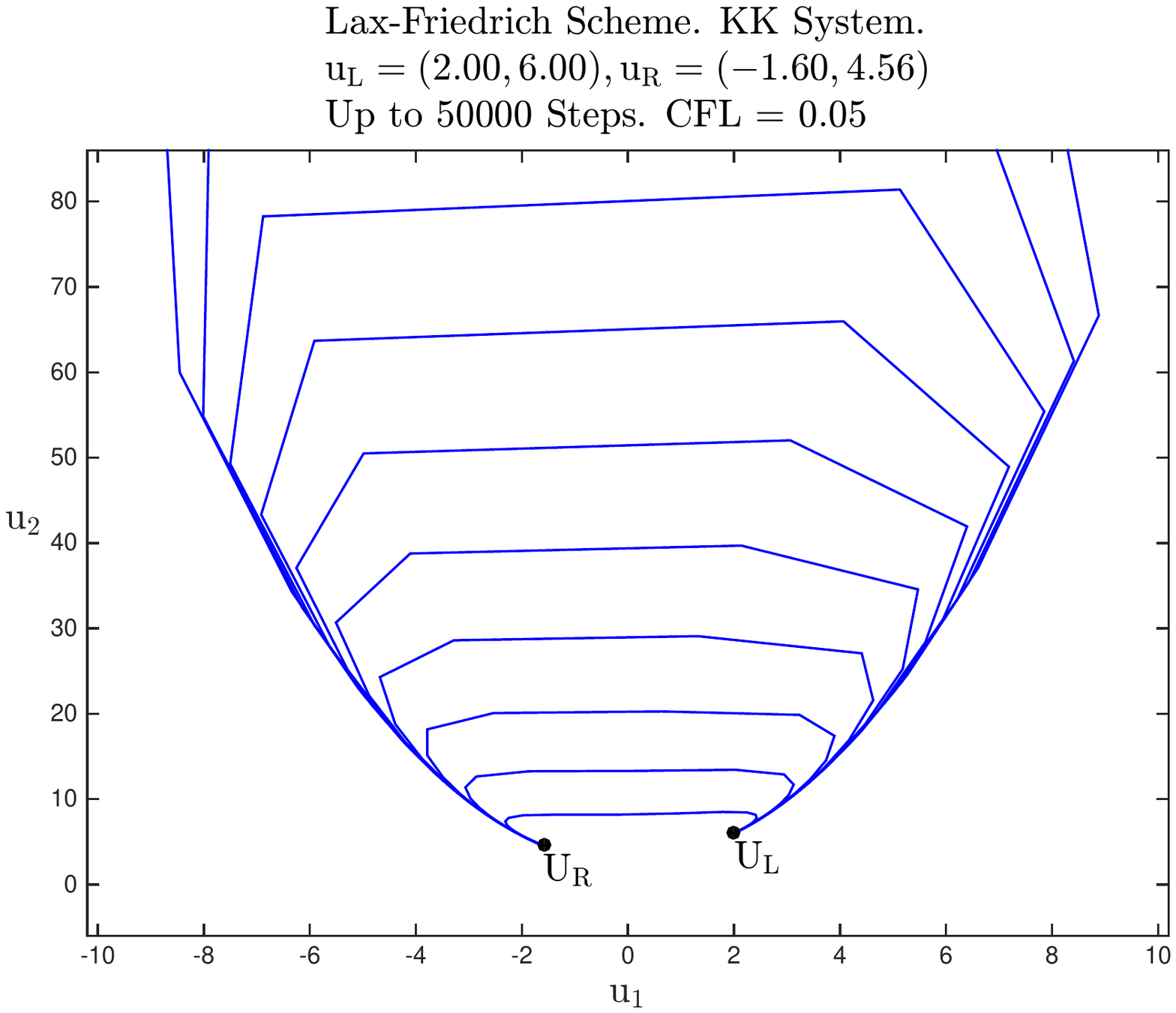}
}\end{parbox}
\caption{
Numerical solutions for the Riemann data
$u_L=(1,6)$ and $u_R=(-1.6.4.56)$ using a Lax-Friedrichs scheme
up to $50,000$ steps with $\mathrm{CFL}=0.05$.}
\label{fig_LF}
\end{figure}

\begin{main}
Consider the Riemann problem \eqref{claw_u12} and \eqref{ic_riemann_u12}.
Let \begin{subequations}\label{def_speed}\begin{align}
  &s=\frac{f_1(u_L)-f_1(u_R)}{u_{1L}-u_{1R}}\label{def_s}\\
  &w_L= f(u_L)- su_L,\; w_R= f(u_R)- su_R\label{def_wLwR}\\
  &e_0= w_{2L}- w_{2R}.\label{def_e0}
\end{align}\end{subequations}
Assume \begin{enumerate}
  \item[$\mathrm{(H1)}$]
  $\mathrm{Re}(\lambda_\pm(u_R))< s< \mathrm{Re}(\lambda_\pm(u_L))$.
  \item[$\mathrm{(H2)}$]
  $e_0>0$.
\end{enumerate}
Then there exists a Dafermos profile for a singular shock from $u_L$ to $u_R$.
That is, for each small $\epsilon>0$,
there is a solution $\tilde{u}_\epsilon(\xi)$ of \eqref{dafermos_similar} and \eqref{bc_u_infty},
and this solution becomes unbounded as $\epsilon\to 0$.
Indeed, \begin{subequations}\begin{align}
  &\max_{\xi}\pm\tilde{u}_{1\epsilon}(\xi)= \Big(
    \omega_0+ o(1)
  \Big)\epsilon^{-1}
  \label{est_u1max}\\
  &\max_{\xi}\tilde{u}_{2\epsilon}(\xi)
  = \left(\kappa_0^2+o(1)\right)\epsilon^{-2},\label{est_u2max}
\end{align}\label{est_umax}
\end{subequations}
as $\epsilon\to 0$,
where $\kappa_0$ and $\omega_0$ are positive constant
defined later in \eqref{def_kappa0} and \eqref{def_omega0}.
Moreover, if we set $u_\epsilon(x,t)=\tilde{u}_\epsilon(x/t)$,
then $u_\epsilon(x,t)$ is a solution of \eqref{dafermos_u12}
and \begin{subequations}\begin{align}
  &u_{1\epsilon}
  \rightharpoonup u_{1L}+ (u_{1R}-u_{1L})\mathrm{H}(x-st)\label{limit_u1}\\
  &u_{2\epsilon}
  \rightharpoonup u_{2L}+ (u_{2R}-u_{2L})\mathrm{H}(x-st)
  + \tfrac{e_0}{\sqrt{1+s^2}}t\delta_{\{x=st\}}\label{limit_u2}
\end{align}\label{limit_u12}\end{subequations}
in the sense of distributions.
\end{main}

The notation $t\delta_{\{x=st\}}$ in \eqref{limit_u2}
denotes the linear functional defined by \beq{def_delta}
  \langle t\delta_{\{x=st\}},\varphi\rangle
  = \int_0^\infty t\varphi(st,t)\sqrt{1+s^2}\;dt.
\] The weight $\sqrt{1+s^2}$ is the arc length of the parametrized line $\{x=st\}$,
so that the definition of the functional
is independent of parametrizations.

A set of sample data for which $\mathrm{(H1)}$ and $\mathrm{(H2)}$ hold is \beq{data_sample}
  u_L= (2,6),\; u_R= (-1.6,4.56),
\] for which \eqref{def_speed} gives $s=0$ and $e_0=0.423$.
A numerical solution
for this Riemann data
using a finite difference scheme
is shown in Fig \ref{fig_LF}.
Observe that both $u_1$ and $u_2$ appear to grow unboundedly near the shock.
This is consistent with the theorem.

%%%%%%%%%%%%%%%%%%%%%%%%%%%%%%%%%%%
\section{Compactification and Desingularization}
\label{sec_cpt}
%%%%%%%%%%%%%%%%%%%%%%%%%%%%%%%%%%%

To find solutions of \eqref{sf_u12} connecting $u_L$ and $u_R$,
we first consider the limiting system \eqref{fast_u12}
with $(w_1,w_2,\xi)= (w_{1L},w_{2L},s)$ and $(w_{1R},w_{2R},s)$,
where $s$, $w_L$ and $w_R$ are as defined in \eqref{def_speed}.

\begin{prop}\label{prop_sss}
Assume $\mathrm{(H1)}$.
Then there exists
a unique solution of \eqref{fast_u12}
of the form $\gamma_1(\sigma)=(u^{(1)}(\sigma),w_L,s)$ satisfying \[
  \lim_{\sigma\to-\infty}u^{(1)}(\sigma)= u_L,\quad
  \lim_{\sigma\to0^-}\left(
    u_2^{(1)},
    \frac{u_1^{(1)}}{\sqrt{u_2^{(1)}}}
  \right)(\sigma)
  = \left(+\infty,\sqrt{3-\sqrt{3}}\right)
\] and a unique solution
of the form $\gamma_2(\sigma)=(u^{(2)}(\sigma),w_R,s)$ satisfying \[
  \lim_{\sigma\to+\infty}u^{(1)}(\sigma)= u_R,\quad
  \lim_{\sigma\to0^+}\left(
    u_2^{(2)},
    \frac{u_1^{(2)}}{\sqrt{u_2^{(2)}}}
  \right)(\sigma)
  = \left(+\infty,-\sqrt{3-\sqrt{3}}\right).
\]
\end{prop}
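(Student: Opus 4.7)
The plan is to view each of the two connection problems as a heteroclinic orbit in a blown-up planar system, exploiting the fact that $w$ and $\xi$ are first integrals of the fast system \eqref{fast_u12}.

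\textbf{Reduction and blow-up.} Fixing $(w_1,w_2,\xi) = (w_{1L},w_{2L},s)$ reduces \eqref{fast_u12} to a planar vector field in $(u_1,u_2)$, for which $u_L$ is an equilibrium by the definition of $w_L$. A direct computation of the Jacobian at $u_L$ gives eigenvalues $(u_{1L}\pm 1) - s$, both positive by (H1); hence $u_L$ is a source. To resolve the blow-up as $u_2 \to +\infty$, I introduce
\[
  r = u_2^{-1/2}, \qquad U_1 = u_1\, u_2^{-1/2},
\]
and rescale time by $d\tau = r\,d\sigma$. The resulting vector field extends smoothly across the invariant line $\{r=0\}$, with leading-order form
\[
  U_1' = -\tfrac16\bigl(U_1^4 - 6U_1^2 + 6\bigr) + O(r), \qquad r' = -\tfrac{U_1^3}{6}\, r + O(r^2),
\]
so $\{r=0\}$ carries four equilibria at the roots $U_1 = \pm\sqrt{3\pm\sqrt{3}}$ of the quartic.

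\textbf{Heteroclinic construction.} The inner roots $p_\pm := (\pm\sqrt{3-\sqrt{3}},0)$ are saddles: the Jacobians are upper triangular with one positive and one negative eigenvalue, and a short computation shows that the stable eigenvector of $p_+$ (resp.\ the unstable eigenvector of $p_-$) points into the interior half-plane $\{r>0\}$. I identify $\gamma_1$ with the interior branch of $W^s(p_+)$ and $\gamma_2$ with the interior branch of $W^u(p_-)$; these are one-dimensional curves, which immediately yields uniqueness. Since $u_L$ is a source of the planar flow and $u_R$ is a sink of the analogous flow with $(w,\xi)=(w_R,s)$ (again by (H1)), it remains to show that the $\alpha$-limit of $W^s(p_+)$ is $u_L$ and the $\omega$-limit of $W^u(p_-)$ is $u_R$. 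I would do this by a Poincar\'e--Bendixson argument on the compactified half-disk: the candidate limit sets are an equilibrium, a periodic orbit, or a polycycle; the recurrent alternatives are ruled out by a Dulac multiplier on the relevant sector together with the sign pattern of the flow on the boundary $\{r=0\}$, leaving only $u_L$ (resp.\ $u_R$) as possible limit. The prescribed asymptotics $u_2^{(i)} \to +\infty$ and $u_1^{(i)}/\sqrt{u_2^{(i)}} \to \pm\sqrt{3-\sqrt{3}}$ as $\sigma \to 0^{\mp}$ are then immediate from $(U_1,r) \to p_\pm$ together with $\sigma = \int r\,d\tau$, whose finiteness as $\tau \to \pm\infty$ is guaranteed by the linear rate $r \sim e^{-c^3|\tau|/6}$ near $p_\pm$.

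\textbf{Main obstacle.} The essential technical difficulty is the global phase-plane step above: confirming that the interior branches of $W^s(p_+)$ and $W^u(p_-)$ really do limit on $u_L$ and $u_R$, and not on another interior equilibrium of the Rankine--Hugoniot relation $f(u)-su = w_L$ (or $w_R$), on a limit cycle, or on a heteroclinic cycle along $\{r=0\}$. This will require a careful examination of the nullclines $u_2 = u_1^2 - s u_1 - w_{1L}$ and $\tfrac13 u_1^3 - u_1 = s u_2 + w_{2L}$, and a Dulac function tailored to the Keyfitz--Kranzer nonlinearity, together with (H1) to pin down the position of $u_L$ and $u_R$ relative to those nullclines.
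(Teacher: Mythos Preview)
The paper does not actually prove this proposition: its entire proof is the one-line citation ``See \cite[Theorem 3.1]{Schaeffer:1993}.'' So there is no in-paper argument to compare against, only the external reference.

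Your outline is the natural approach and is fully consistent with the paper's own framework: the blow-up variables $(U_1,r)=(u_1/\sqrt{u_2},\,1/\sqrt{u_2})$ you introduce are exactly the compactifying coordinates $(\beta,r)$ the paper uses in Section~\ref{sec_cpt}, and your leading-order equations match \eqref{sf_br} at $r=0$. The identification of $\gamma_1$ and $\gamma_2$ with the interior branches of $W^s(p_+)$ and $W^u(p_-)$ is correct, and the uniqueness argument via one-dimensionality of these invariant curves is sound. Your computation that $\sigma=\int r\,d\tau$ converges near $p_\pm$ (because $r$ decays at the linear rate $-\rho_3^3/6$) is also right and gives the finite-time blow-up in the original variable.

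The obstacle you flag is genuine and is precisely the substantive content of the cited theorem in \cite{Schaeffer:1993}. Note in particular that the Rankine--Hugoniot relation $f(u)-su=w_L$ reduces to a cubic in $u_1$ after eliminating $u_2$, so there can be up to three finite equilibria; ruling out the extra ones (and any recurrent behavior) as $\alpha$- or $\omega$-limits is real work. A Dulac-type argument is plausible here, but you would need to exhibit the multiplier explicitly and check it on the whole relevant region, including the compactified boundary $\{r=0\}$, where the outer equilibria $\pm\sqrt{3+\sqrt3}$ also sit. In short: your plan is correct and matches the paper's setup, but what you have is a reduction to, rather than a replacement for, the cited result.
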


\begin{proof}
See \cite[Theorem 3.1]{Schaeffer:1993}.
\end{proof}

Motivated by Proposition \ref{prop_sss},
we compactify the state space by defining \beq{def_beta}
  \beta=\frac{u_1}{\sqrt{u_2}},\quad
  r= \frac1{\sqrt{u_2}}.
\] In this definition
we have assumed $u_2$ to be positive.
This is just for convenience
and has no loss of generality.
In general cases, since the value of $u_2$
is bounded from below along $\gamma_1$ and $\gamma_2$,
say $u_2>-M$,
we may replace $u_2$ by $u_2+M$.

In $(\beta,r,w_1,w_2,\xi,\epsilon)$-coordinates, \eqref{sf_u12} becomes,
after multiplying by $r$, 
\beq{sf_br}
  &\dot\beta
  = \tfrac{-1}{6}(\beta^4-6\beta^2+6)
  + r\left(
      \tfrac{-\beta\xi}2
      +r\left(\tfrac{\beta^2}2-w_1\right)
      +\tfrac{r^2}2\beta w_2
  \right)\\
  &\dot r= -\tfrac{\beta^3}6r
  +\tfrac{r^2}2\big(\xi+r\beta+r^2w_2\big)\\
  &\dot{w}_1= -\beta\epsilon\\
  &\dot{w}_2= \tfrac{-\epsilon}r\\
  &\dot{\xi}= r\epsilon\\
  &\dot{\epsilon}= 0.
\]
Note that the time scale in \eqref{sf_br} is different from that of \eqref{sf_u12},
but we use the same notation $\cdot$ to denote derivatives in time.
This should cause no ambiguity since
the time scales can be distinguished by the equations for $\dot\xi$.

In \eqref{sf_br}, the equation for $\dot{w}_2$ is not defined when $r=0$.
To make sense of it,
one naive way is to multiply the system by $r$,
but this will make the set $\{r=0\}$ non-normally hyperbolic.
To avoid this degeneracy,
our remedy is to replace $\epsilon$ by $\kappa= \epsilon/r$.
Then the system \eqref{sf_br} becomes
\beq{deq_brk}
  &\dot\beta
  = \tfrac{-1}{6}(\beta^4-6\beta^2+6)
  + r\left(
      \tfrac{-\beta\xi}2
      +r\left(\tfrac{\beta^2}2-w_1\right)
      +\tfrac{r^2}2\beta w_2
  \right)\\
  &\dot r= -\tfrac{\beta^3}6r
  +\tfrac{r^2}2\big(\xi+r\beta+r^2w_2\big)\\
  &\dot{w}_1= -\kappa \beta r\\
  &\dot{w}_2= -\kappa\\
  &\dot{\xi}= \kappa r^2\\
  &\dot\kappa
  = \tfrac{\beta^3}6\kappa
  + \tfrac{r}2\big(
    -\kappa \xi- r\beta\kappa- r^2\kappa w_2
  \big)
\]
Note that the first two equations in \eqref{sf_br} and \eqref{deq_brk} are identical.

The sets $\{u_2=+\infty\}$ and $\{\epsilon=0\}$
correspond to $\{r=0\}$ and $\{\kappa=0\}$.
Taking $r=0$ and $\kappa=0$,
the system \eqref{deq_brk} reduces to a single equation for $\beta$,
namely \beq{deq_beta}
  \dot\beta= \tfrac{-1}{6}(\beta^4-6\beta^2+6).
\] For this equation,
the equilibria are $\beta=\rho_j$, $j=1,\ldots,4$, where \beq{def_rho}
  \rho_1= -\sqrt{3+\sqrt3},\;
  \rho_2= -\sqrt{3-\sqrt3},\;
  \rho_3= \sqrt{3-\sqrt3},\;
  \rho_4= \sqrt{3+\sqrt3}.
\] Let \begin{align}
  &\mathcal P_L
  = \{(\beta,r,\kappa,w_1,w_2,\xi): \beta=\rho_3, r=0,\kappa=0\} \label{def_pl}\\
  &\mathcal P_R
  = \{(\beta,r,\kappa,w_1,w_2,\xi): \beta=\rho_2, r=0,\kappa=0\}. \label{def_pr}
\end{align} The trajectory $\gamma_1$ given in Proposition \ref{prop_sss}
connects $u_L$ and $\mathcal P_L$,
and $\gamma_2$
connects $u_R$ and $\mathcal P_R$.
Next we shall find connections
between $\mathcal P_L$ and $\mathcal P_R$.

We will find a trajectory on $\{r=0\}$
connecting $(\beta,\kappa,w_1,w_2,\xi)=(\rho_3,0,w_{1L},w_{2L},s)$
and $(\rho_2,0,w_{1R},w_{2R},s)$.
When $r=0$, the system
reduces to \begin{subequations}\label{deq_brk0}\begin{align}
  &\dot\beta
  = \tfrac{-1}{6}(\beta^4-6\beta^2+6)\label{deq_brk0_beta}\\
  &\dot\kappa
  = \tfrac{\beta^3}6\kappa\label{brk0_kappa}\\
  &\dot{w}_2= -\kappa\label{deq_brk0_w2}\\
  &\dot{w}_1= 0,\;
  \dot{\xi}=0.\label{deq_brk0_xi}
\end{align}\end{subequations}
Observe that the system \eqref{deq_brk0} is only weakly coupled,
so we can solve it by integration:

\begin{figure}[t]
\centering{
\includegraphics[trim = 2.4cm 7cm 1.4cm 7.4cm, clip, width=.66\textwidth]{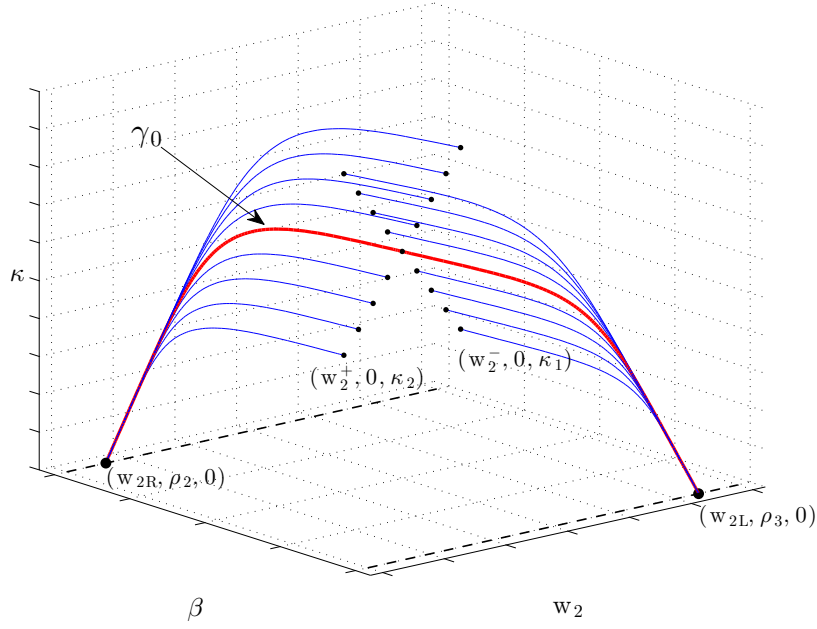}
}
\caption{
Trajectories for \eqref{deq_brk0}
starting at $(\rho_3,0,w_{1L},w_{2L},s)$,
and those ending at $(\rho_2,0,w_{1R},w_{2R},s)$.
}
\label{fig_gamma0}
\end{figure}

\begin{figure}[t]
\centering{
\includegraphics[trim = 2.4cm 7cm 1.4cm 7.4cm, clip, width=.66\textwidth]{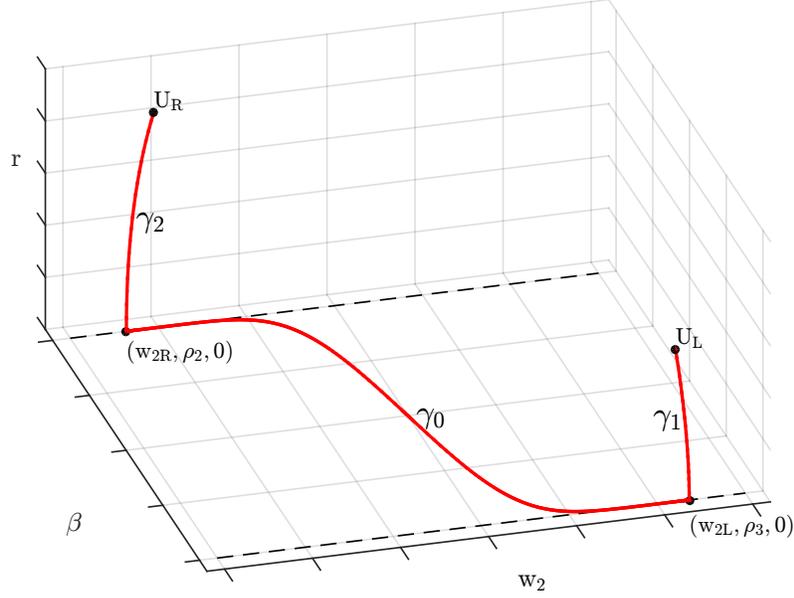}
}
\caption{
$\gamma_1$, $\gamma_2$ and $\gamma_0$
displayed in $(\beta,r,w_2)$-space.
}
\label{fig_gamma012}
\end{figure}

\begin{figure}[t]
\centering
\begin{parbox}{.68\textwidth}{\centering
{\includegraphics[trim = 2.2cm 7.2cm 1.2cm 7.2cm, clip, width=.66\textwidth]{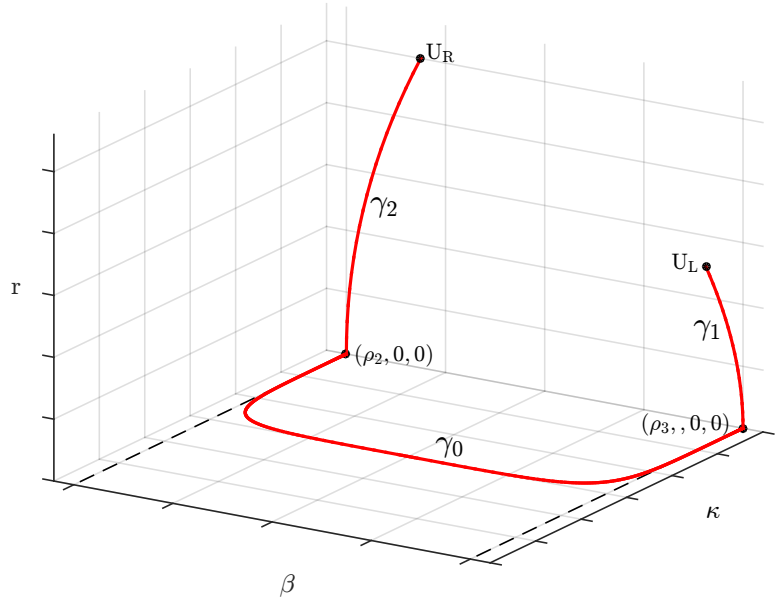}
}}\end{parbox}
\caption{
Near the singular configuration $\gamma_1\cup \gamma_0\cup \gamma_2$,
we will find trajectories for \eqref{deq_brk}
lying in the hyper-surface $\{rk=\epsilon\}$ for each small $\epsilon>0$.
}
\label{fig_gamma_kr}
\end{figure}

\begin{prop}\label{prop_wbk}
There exist positive smooth functions ${\iota_1}$, ${\iota_2}$ and ${\iota_3}$
which satisfy the following:
For any parameters $(\bar\kappa,\bar{w}_1,\bar{w}_2,\bar\xi)$,
the system \eqref{deq_brk0} with boundary conditions \beq{bc_kappa_minus}
  (\beta,\kappa)(0)=(0,\bar\kappa),\quad
  (w_1,w_2,\xi)(-\infty)= (\bar{w}_1,\bar{w}_2,\bar{\xi}),
\] has a unique solution \beq{def_kappa_minus}
  (\beta^-,\kappa^-,w_1^-,w_2^-,\xi^-)(\sigma)
  = \big(
    {\iota_1}(\sigma),\bar\kappa {\iota_2}(\sigma),\bar{w}_1,\bar{w}_2+\bar\kappa {\iota_3}(\sigma),\bar\xi
  \big).
\] For any parameters $(\hat\kappa,\hat{w}_1,\hat{w}_2,\hat\xi)$, 
the system \eqref{deq_brk0} with boundary conditions \beq{bc_kappa_plus}
  (\beta,\kappa)(0)=(0,\hat\kappa),\quad
  (w_1,w_2,\xi)(+\infty)= (\hat{w}_1,\hat{w}_2,\hat{\xi}),
\] has a unique solution \beq{def_kappa_plus}
  (\beta^+,\kappa^+,w_1^+,w_2^+,\xi^+)(\sigma)
  = \big(
    {\iota_1}(-\sigma),\hat\kappa {\iota_2}(-\sigma),
    \hat{w}_1,\hat{w}_2-\hat\kappa {\iota_3}(\sigma), \hat{\xi}
  \big).
\]
\end{prop}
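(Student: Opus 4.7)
The system \eqref{deq_brk0} has a cascading structure that I plan to exploit: $w_1$ and $\xi$ are constants of the motion, $\beta$ evolves autonomously through \eqref{deq_brk0_beta}, $\kappa$ is then driven linearly by $\beta$ via \eqref{brk0_kappa}, and finally $w_2$ comes from a single quadrature of $\kappa$ through \eqref{deq_brk0_w2}. So I would integrate layer by layer and read off $\iota_1$, $\iota_2$, $\iota_3$ from the resulting formulas.

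For the first layer, set $g(\beta):=-\tfrac16(\beta^4-6\beta^2+6)$. The zeros of $g$ are $\rho_1,\ldots,\rho_4$; on the open interval $(\rho_2,\rho_3)$ one has $g<0$, and at the endpoints $g'(\rho_3)>0$ and $g'(\rho_2)<0$, so $\rho_3$ is a source and $\rho_2$ a sink for this scalar equation. Hence $\dot\beta=g(\beta)$, $\beta(0)=0$ has a unique smooth solution $\iota_1(\sigma)$ defined for all real $\sigma$, and a standard phase-line argument produces $\iota_1(\sigma)\to\rho_3$ as $\sigma\to-\infty$ and $\iota_1(\sigma)\to\rho_2$ as $\sigma\to+\infty$, with exponential rates. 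Because $g$ is even and $\rho_3=-\rho_2$, uniqueness forces $\iota_1$ to be odd, and this symmetry is exactly what lets the $+$ profile be obtained from the $-$ profile by $\sigma\mapsto-\sigma$; it is the origin of the arguments $\iota_1(-\sigma)$, $\iota_2(-\sigma)$ appearing in \eqref{def_kappa_plus}.

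For the second layer, \eqref{brk0_kappa} is linear in $\kappa$ with coefficient $\iota_1^3/6$ already known, so its unique solution with $\kappa(0)=\bar\kappa$ is $\kappa=\bar\kappa\,\iota_2(\sigma)$ with
\[
\iota_2(\sigma):=\exp\!\left(\tfrac16\int_0^\sigma \iota_1(\tau)^3\,d\tau\right).
\]
Since $\iota_1(\tau)^3$ tends to $\rho_3^3>0$ as $\tau\to-\infty$ and to $\rho_2^3<0$ as $\tau\to+\infty$, in both cases $\int_0^\sigma \iota_1^3\,d\tau\to-\infty$ linearly in $|\sigma|$, so $\iota_2$ is positive, smooth, and exponentially small as $|\sigma|\to\infty$. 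Finally, \eqref{deq_brk0_w2} becomes a single quadrature: the boundary value $w_2(-\infty)=\bar w_2$ selects the unique antiderivative, and exponential decay of $\iota_2$ makes the resulting improper integral convergent. Setting
\[
\iota_3(\sigma):=\int_{-\infty}^{\sigma}\iota_2(\tau)\,d\tau
\]
(up to an overall sign to match \eqref{def_kappa_minus}) gives a positive, smooth function, and the formula in \eqref{def_kappa_minus} follows by direct substitution. The $+$ case is produced identically by integrating from $+\infty$, or more compactly by applying the $(\sigma,\beta)\mapsto(-\sigma,-\beta)$ symmetry of the first two equations in \eqref{deq_brk0} to the $-$ solution.

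The only step carrying real content is the asymptotic analysis of $\iota_1$ near $\rho_2$ and $\rho_3$. Since the equation is scalar and separable and each equilibrium is hyperbolic, one can either integrate $d\sigma=d\beta/g(\beta)$ near each equilibrium or appeal to the stable/unstable manifold theorem for scalar flows to obtain $|\iota_1(\sigma)-\rho_3|=O(e^{g'(\rho_3)\sigma})$ as $\sigma\to-\infty$ and $|\iota_1(\sigma)-\rho_2|=O(e^{g'(\rho_2)\sigma})$ as $\sigma\to+\infty$. These exponential rates are precisely what make the exponent in the definition of $\iota_2$ diverge linearly and the integral defining $\iota_3$ converge. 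Uniqueness is automatic at every layer because each layer is an IVP with locally Lipschitz right-hand side, and smoothness of $\iota_j$ in $\sigma$ follows from standard ODE regularity.
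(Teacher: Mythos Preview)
Your approach is essentially identical to the paper's: both define $\iota_1$ as the solution of \eqref{deq_brk0_beta} with $\iota_1(0)=0$, set $\iota_2(\sigma)=\exp\bigl(\tfrac16\int_0^\sigma\iota_1(\tau)^3\,d\tau\bigr)$ and $\iota_3(\sigma)=\int_{-\infty}^\sigma\iota_2(\tau)\,d\tau$, and then verify the formulas by direct substitution. You supply considerably more detail than the paper does---the phase-line analysis giving $\iota_1(\pm\infty)=\rho_{2,3}$ with exponential rates, the resulting exponential decay of $\iota_2$, the convergence of the integral defining $\iota_3$, and the oddness of $\iota_1$---whereas the paper simply writes down the three definitions and asserts that a direct calculation finishes the job.
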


\begin{proof}
First we solve \eqref{deq_brk0_beta}
by setting \beq{def_q1}
  \text{
    ${\iota_1}(\sigma)$ to be
    the solution of \eqref{deq_brk0_beta} satisfying ${\iota_1}(0)=0$.
  }
\] Let \beq{def_q2q3}
  {\iota_2}(\sigma)
  = \exp\left(\int_0^\sigma \frac{{\iota_1}(\tau)^3}6\;d\tau\right),\quad
  {\iota_3}(\sigma)
  = \int_{-\infty}^\sigma {\iota_2}(\tau)\;d\tau.
\]
Then a direct calculation shows that
\eqref{def_kappa_minus} and \eqref{def_kappa_plus}
are solutions of \eqref{deq_brk0} satisfying \eqref{bc_kappa_minus} and \eqref{bc_kappa_plus}.
\end{proof}

See Fig \ref{fig_gamma0} for the trajectories given in Proposition \ref{prop_wbk}.
Note that ${\iota_1}(\sigma)$ defined in \eqref{def_q1}
satisfies ${\iota_1}(-\infty)=\rho_3$ and ${\iota_2}(+\infty)=\rho_2$.

\begin{prop}\label{prop_gamma0}
If we set \beq{set_kappa_bar}
  (\bar\kappa,\bar{w}_1,\bar{w}_2,\bar\xi)
  = (\kappa_0,w_{1L},w_{2L},s),\quad
  (\tilde\kappa,\tilde{w}_1,\tilde{w}_2,\tilde\xi)
  = (\kappa_0,w_{1R},w_{2R},s),
\] with \beq{def_kappa0}
  \kappa_0= \frac{w_{2R}-w_{2L}}{2{\iota_3}(0)},
\] where ${\iota_3}(\sigma)$ is as defined in \eqref{def_q2q3},
then \eqref{def_kappa_minus} and \eqref{def_kappa_plus} coincide,
and this gives a solution of \eqref{deq_brk0},
denoted by $\gamma_0(\sigma)$,
satisfying \beq{bc_gamma0}
  \gamma_0(-\infty)
  = (\rho_3,0,w_{1L},w_{2L},s),\quad
  \gamma_0(+\infty)
  = (\rho_2,0,w_{1R},w_{2R},s).
\]
\end{prop}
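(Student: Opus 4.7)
The plan is to piece together the two branches from Proposition \ref{prop_wbk}. Applied with the parameters $(\bar\kappa,\bar w_1,\bar w_2,\bar\xi) = (\kappa_0, w_{1L}, w_{2L}, s)$ and $(\hat\kappa,\hat w_1,\hat w_2,\hat\xi) = (\kappa_0, w_{1R}, w_{2R}, s)$, that proposition provides two explicit solutions of \eqref{deq_brk0}: one on $(-\infty, 0]$ with the correct limit at $-\infty$, and one on $[0, +\infty)$ with the correct limit at $+\infty$. To prove that they fit together into a single trajectory $\gamma_0$, it suffices to verify that they agree at $\sigma = 0$; ODE uniqueness for the smooth system \eqref{deq_brk0} then forces the concatenation to be a genuine $C^\infty$ trajectory, and the asymptotic conditions \eqref{bc_gamma0} follow directly from the limiting properties of $\iota_1$, $\iota_2$, $\iota_3$.

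To check the matching at $\sigma = 0$ coordinate by coordinate: the $\beta$- and $\kappa$-components are equal by construction, since both branches satisfy $(\beta,\kappa)(0) = (0,\kappa_0)$. The $\xi$-component is constant along each branch, equal to $s$ on both sides, so this matches trivially. The $w_1$-component is also constant along each branch, so its agreement amounts to showing $w_{1L} = w_{1R}$. This is the first Rankine-Hugoniot identity: a direct manipulation of \eqref{def_s}--\eqref{def_wLwR} gives $w_{1L} - w_{1R} = (u_{1L}^2 - u_{1R}^2) - (u_{2L} - u_{2R}) - s(u_{1L}-u_{1R})$, and by the very definition of $s$ the last term cancels the first two.

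The only component whose matching actually constrains the free parameter is $w_2$. Evaluating $w_2^\pm(0)$ from \eqref{def_kappa_minus}--\eqref{def_kappa_plus} yields two affine expressions in $\kappa_0$ whose difference is proportional to $\iota_3(0)$; setting them equal produces a single linear equation in $\kappa_0$, whose unique solution is precisely \eqref{def_kappa0}. With this choice, the two branches coincide at $\sigma = 0$ and hence define a single trajectory $\gamma_0$. To read off the endpoints in \eqref{bc_gamma0}, I would invoke $\iota_1(-\infty) = \rho_3$ and $\iota_1(+\infty) = \rho_2$ (from the one-dimensional phase portrait of \eqref{deq_brk0_beta} on $(\rho_2,\rho_3)$), the decay $\iota_2(\pm\infty) = 0$ (since the exponent $\int_0^\sigma \iota_1^3/6\,d\tau$ tends to $-\infty$ at both ends), and $\iota_3(-\infty) = 0$ by definition, together with the constancy of $w_1$ and $\xi$ along $\gamma_0$.

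There is no substantial obstacle in the argument: every component except $w_2$ matches either by construction or for algebraic-kinematic reasons, and the $w_2$-matching is designed to be the equation defining $\kappa_0$. The only computation external to the machinery of Proposition \ref{prop_wbk} is the short Rankine-Hugoniot identity $w_{1L} = w_{1R}$, and that is the one place worth writing out explicitly to make the proof self-contained.
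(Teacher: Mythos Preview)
Your proposal is correct and follows essentially the same approach as the paper: match the two branches from Proposition~\ref{prop_wbk} at $\sigma=0$, use the Rankine--Hugoniot identity $w_{1L}=w_{1R}$ coming from \eqref{def_s}--\eqref{def_wLwR}, and solve the single remaining $w_2$-matching equation to recover $\kappa_0$. Your write-up is more explicit in checking each component and in spelling out the endpoint asymptotics via the behavior of $\iota_1,\iota_2,\iota_3$, but the argument is the same.
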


\begin{proof}
First we set \[
  (\bar{w}_1,\bar{w}_2,\bar{\xi})
  = (w_{1L},w_{2L},s),\quad
  (\tilde{w}_1,\tilde{w}_2,\tilde{\xi})
  = (w_{1R},w_{2R},s).
\] From the definitions in \eqref{def_s} and \eqref{def_wLwR}
we have $w_{1L}=w_{1R}$.
Solving \[
  (\beta^-,\kappa^-,w_1^-,w_2^-,\xi^-)(0)
  = (\beta^+,\kappa^+,w_1^+,w_2^+,\xi^+)(0)
\] in \eqref{def_kappa_minus} and \eqref{def_kappa_plus}
for $\bar\kappa$ and $\tilde\kappa$,
we obtain the solution $\bar\kappa=\tilde\kappa=\kappa_0$
as defined in \eqref{def_kappa0}.
This gives a trajectory $\gamma_0(\sigma)$ satisfying \eqref{bc_gamma0}.
From the uniqueness of solutions of boundary value problems,
this trajectory is unique.
\end{proof}

We will show that the for the system \eqref{deq_brk}
there are trajectories close to $\gamma_1\cup \gamma_0\cup \gamma_2$
lying in hyper-surfaces $\{rk=\epsilon\}$, $\epsilon>0$.
See Fig \ref{fig_gamma012} and \ref{fig_gamma_kr}.

For solutions $(u_{1\epsilon},u_{2\epsilon})(\xi)$ of \eqref{dafermos_similar} and \eqref{bc_u_infty},
from the equation for $\dot{\xi}$ in \eqref{sf_u12},
we know the $\xi$-interval corresponding to any compact segment of $\gamma_1$ or $\gamma_2$
has length of order $O(\epsilon)$.
We will see at the end of Section \ref{sub{KK_sec_complete}istence} that
the length of the $\xi$-interval corresponding to any compact segment of $\gamma_0$
is of order $O(\epsilon^2)$.

%%%%%%%%%%%%%%%%%%%%%%%%%%%%%%%%%%%
\section{Geometric Singular Perturbation Theory}
\label{sec_GSPT}
%%%%%%%%%%%%%%%%%%%%%%%%%%%%%%%%%%%
Our main goal is to solve
the boundary value problem \eqref{dafermos_similar} and \eqref{bc_u_infty}.
Note that \eqref{dafermos_similar} is a \emph{singularly perturbed equation}
since the perturbation $\epsilon\frac{d^2}{d\xi^2}u$
has a higher order derivative than the other terms in the equation.
We will apply GSPT to deal with singularly perturbed equations.
The idea of GSPT
is to first study a set of subsystems which forms a decomposition of a system,
and then to use the information for the subsystems
to conclude results for the original system.

In Section \ref{subsec_fenichel} and \ref{subsec_silnikov},
we recall some fundamental theorems in GSPT.
We only briefly state necessary theorems
because it is similar to \cite[Section 4]{Hsu:2015a}.
In Section \ref{subsec_corner}
we state and give new proofs for a version
of the Corner Lemma.

%%%%%%%%%%%%%%%%%%%%%%%%%%%%%%%%
\subsection{Fenichel's Theory for Fast-Slow Systems} \label{subsec_fenichel}
%%%%%%%%%%%%%%%%%%%%%%%%%%%%%%%%
Note that \eqref{sf_u} is a \emph{fast-slow system},
which means that the system is of the form \beqeps{sf_xy}
  &\dot x=f(x,y,\epsilon)\\
  &\dot y=\epsilon g(x,y,\epsilon).
\] where $(x,y)\in\mathbb R^n\times \mathbb R^l$, 
and $\epsilon$ is a parameter.
In order to deal with fast-slow systems,
Fenichel's Theory
was developed in \cite{Fenichel:1973,Fenichel:1977,Fenichel:1979}.
Some expositions for that theory can be found in \cite{Wiggins:1994,Jones:1995}.

An important feature of a fast-slow system
is that the system can be decomposed into two subsystems:
the \emph{limiting fast system} and the \emph{limiting slow system}.
The limiting fast system is obtained by taking $\epsilon=0$ in \eqref{sf_xy};
that is, \beq{fast_xy}
  &\dot x=f(x,y,0)\\
  &\dot y=0.
\] 
On the other hand, note that the system \eqref{sf_xy} can be converted to,
after a rescaling of time,
\beqeps{sf_xy_singular}
  &\epsilon x'= f(x,y,\epsilon)\\
  &y'= g(x,y,\epsilon).
\] Taking $\epsilon=0$ in \eqref{sf_xy_singular}, 
we obtain the limiting slow system \beq{slow_xy}
  &0=f(x,y,0)\\
  &y'=g(x,y,0).
\] 
Note that the limiting slow system \eqref{slow_xy}
describes dynamics on the set of critical points of 
the limiting fast system \eqref{fast_xy},
so we will need to piece together the information of
the limiting fast system and the limiting slow system
in the vicinity of the set of critical points.
To piece this information together,
\emph{normal hyperbolicity} defined below will be a crucial condition.
\begin{defn}\label{defn_normally_hyperbolic}
A \emph{critical manifold} $\mathcal S_0$ for \eqref{fast_xy}
is an $l$-dimensional manifold consisting of critical points of \eqref{fast_xy}.
A critical manifold is \emph{normally hyperbolic}
if $D_xf(x,y,0)|_{\mathcal S_0}$ is hyperbolic.
That is,
at any point $(x_0,y_0)\in \mathcal S_0$,
all eigenvalues of $D_xf(x,y,0)|_{(x_0,y_0)}$
have nonzero real part.
\end{defn}

Fenichel's Theory is a center manifold theory for fast-slow systems.
For a normally hyperbolic critical manifold $\mathcal S_0$ for \eqref{fast_xy},
the stable and unstable manifolds
$W^s(\mathcal S_0)$ and $W^u(\mathcal S_0)$
can be defined in the natural way.
We denote them by
$W^s_0(\mathcal S_0)$ and $W^u_0(\mathcal S_0)$
to indicate their invariance under \eqref{sf_xy} with $\epsilon=0$.
Fenichel's Theory assures that
the hyperbolic structure of $\mathcal S_0$ persists under perturbation \eqref{sf_xy}.
Below we state three fundamental theorems
of Fenichel's Theory following \cite{Jones:1995}.

\begin{thm}[Fenichel's Theorem 1]
\label{thm_fenichel_invariant}
Consider the system \eqref{sf_xy},
where $(x,y)\in \mathbb R^n\times \mathbb R^l$,
and $f$, $g$ are $C^r$ for some $r\ge 2$.
Let $\mathcal S_0$ be a compact normally hyperbolic manifold for \eqref{fast_xy}.
Then for any small $\epsilon\ge 0$ there exist
locally invariant $C^r$ manifolds,
denoted by $\mathcal S_\epsilon$,
$W^s_\epsilon(\mathcal S_\epsilon)$
and $W^u_\epsilon(\mathcal S_\epsilon)$,
which are $C^1$ $O(\epsilon)$-close to
$\mathcal S_0$, $W^s_0(\mathcal S_0)$ and $W^u_0(\mathcal S_0)$, respectively.
Moreover, for any continuous families of compact sets
$\mathcal I_\epsilon\subset W^u_\epsilon(\mathcal S_\epsilon)$,
$\mathcal J_\epsilon\subset W^s_\epsilon(\mathcal S_\epsilon)$,
$\epsilon\in [0,\epsilon_0]$,
there exist positive constants $C$ and $\nu$
such that \begin{subequations}\label{est_dist}\begin{align}
  &\mathrm{dist}(z\cdot t,\mathcal S_\epsilon)
  \le Ce^{\nu t}
  \quad\forall\; z\in \mathcal I_\epsilon,\; t\le 0\\
  &\mathrm{dist}(z\cdot t,\mathcal S_\epsilon)
  \le Ce^{-\nu t}
  \quad\forall\; z\in \mathcal J_\epsilon,\; t\ge 0,
\end{align}\end{subequations}
where $\cdot$ denotes
the flow for \eqref{sf_xy}.
\end{thm}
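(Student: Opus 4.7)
The plan is to obtain $\mathcal{S}_\epsilon$, $W^s_\epsilon(\mathcal{S}_\epsilon)$, and $W^u_\epsilon(\mathcal{S}_\epsilon)$ by the Hadamard graph transform, exploiting the spectral gap supplied by normal hyperbolicity. First I would localize near $\mathcal{S}_0$: by compactness and Definition \ref{defn_normally_hyperbolic}, one can choose a tubular neighborhood and adapted bundle coordinates $(a,b^s,b^u)$ with $a$ parametrizing $\mathcal{S}_0$ and $b^{s/u}$ in the fibers on which the linearization $D_x f|_{\mathcal{S}_0}$ is block-diagonalized, with $\mathrm{Re}\,\lambda \le -\nu_0$ on the $b^s$-direction and $\mathrm{Re}\,\lambda \ge \nu_0$ on the $b^u$-direction for some $\nu_0 > 0$. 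A smooth cutoff outside a slightly smaller neighborhood extends the vector field to all of $\mathbb{R}^n \times \mathbb{R}^l$ without altering the dynamics inside, turning the problem into a global one to which Banach space contraction methods apply.

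Next I would construct $\mathcal{S}_\epsilon$ as a graph $(b^s,b^u) = h_\epsilon(a)$ via the time-$T$ graph transform. Normal hyperbolicity renders this map a uniform contraction on the Banach space of bounded Lipschitz sections, with contraction constant comparable to $e^{-\nu_0 T}$; the unique fixed point is $\mathcal{S}_\epsilon$. At $\epsilon = 0$ the fixed point is $h_0 \equiv 0$, so Lipschitz dependence of fixed points on parameters yields $\|h_\epsilon\|_{C^0} = O(\epsilon)$. To upgrade from $C^0$ to $C^1$, and then inductively to $C^r$, I would apply the fiber contraction theorem of Hirsch, Pugh, and Shub to the formal derivative equations, choosing $T$ large enough that each derivative operator remains contractive; the $C^1$ estimate of $O(\epsilon)$-closeness drops out simultaneously. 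With $\mathcal{S}_\epsilon$ in hand, the same graph-transform scheme, carried out over the stable bundle of $\mathcal{S}_\epsilon$ with $b^u$ as dependent variable, produces $W^s_\epsilon(\mathcal{S}_\epsilon)$ as a $C^r$ graph that is $O(\epsilon)$-close to $W^s_0(\mathcal{S}_0)$; reversing time yields $W^u_\epsilon(\mathcal{S}_\epsilon)$.

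The decay estimates \eqref{est_dist} then follow by a Gronwall argument applied inside these manifolds. For any $z \in \mathcal{J}_\epsilon \subset W^s_\epsilon(\mathcal{S}_\epsilon)$, the unstable coordinate of $z\cdot t$ vanishes in the adapted frame, while the stable coordinate satisfies $\dot b^s = B(a,b^s,\epsilon)b^s + O(|b^s|^2)$ with $B$ having spectrum contained in $\{\mathrm{Re}\,\lambda \le -\nu\}$ for any prescribed $\nu < \nu_0$, provided $\epsilon$ and the neighborhood are small enough. Gronwall then gives $|b^s(t)| \le Ce^{-\nu t}$, and the constant $C$ is uniform over $\epsilon \in [0,\epsilon_0]$ because the family $\mathcal{J}_\epsilon$ is continuous and each $\mathcal{J}_\epsilon$ is compact. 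The estimate on $\mathcal{I}_\epsilon$ is obtained by the symmetric argument in reverse time.

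The main obstacle is the $C^r$-smoothness claim. The graph transform delivers Lipschitz regularity directly, but bootstrapping to $C^r$ is delicate: each differentiation of the fixed-point equation introduces additional terms that must again be dominated by the spectral gap, which is why one genuinely needs some form of $r$-normal hyperbolicity rather than mere hyperbolicity. In the present fast--slow setting this is automatic, since the slow directions move at rate $O(\epsilon)$ while the hyperbolic rates are bounded below by $\nu_0$; nevertheless the bookkeeping for the fiber contraction argument -- choosing compatible cutoffs, neighborhoods, and contraction times for all derivative orders simultaneously -- is where the bulk of the technical work lies.
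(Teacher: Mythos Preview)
Your outline is correct and follows the classical route to Fenichel's theorem: graph transform for the slow manifold, fiber contraction for $C^r$ smoothness, and Gronwall for the exponential decay. The paper, however, does not prove this theorem at all; its entire proof is the single line ``See \cite[Theorem 3]{Jones:1995}.'' So there is no comparison of approaches to be made---the paper simply imports the result from the standard reference, whereas you have sketched what that reference (and Fenichel's original papers) actually do. Your sketch is faithful to that literature, including the correct identification of the $C^r$ bootstrap as the delicate step and the observation that in the fast--slow setting the required spectral gap is automatic because the center directions move at rate $O(\epsilon)$.
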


\begin{proof}
See \cite[Theorem 3]{Jones:1995}.
\end{proof}

\begin{rmk}
If $\mathcal S_0$ is locally invariant under \eqref{sf_xy} for each $\epsilon$,
then the $\mathcal S_\epsilon$ can be chosen to be $\mathcal S_0$
because of the construction
in the proof of \cite[Theorem 3]{Jones:1995}.
\end{rmk}

Note that $W^u_\epsilon(\mathcal S_\epsilon)$ and $W^s_\epsilon(\mathcal S_\epsilon)$
can be interpreted as a decomposition
in a neighborhood of $\mathcal S_0$ in $(x,y)$-space.
The following theorem asserts that
this induces a change of coordinates $(a,b,c)$
such that $W^u_\epsilon(\mathcal S_\epsilon)$
and $W^s_\epsilon(\mathcal S_\epsilon)$
correspond to $(a,c)$-space and $(b,c)$-space, respectively.

\begin{thm}[Fenichel's Theorem 2]
\label{thm_fenichel_coordinate}
Suppose the assumptions in Theorem \ref{thm_fenichel_invariant} hold.
Then under a $C^r$ $\epsilon$-dependent
coordinate change $(x,y)\mapsto(a,b,c)$,
the system \eqref{sf_xy} can be brought to the form
\beqeps{sf_abc}
  &\dot a= A^u(a,b,c,\epsilon)a\\
  &\dot b= A^s(a,b,c,\epsilon)b\\
  &\dot c= \epsilon\big( h(c)+ E(a,b,c,\epsilon)\big)
\] in a neighborhood of $\mathcal S_\epsilon$,
where the coefficients are $C^{r-2}$ functions satisfying \beq{cond_spec}
  \inf_{\lambda\in \mathrm{Spec}A^u(a,b,c,0)}
  \mathrm{Re}\,\lambda>2\nu,\quad
  \sup_{\lambda\in \mathrm{Spec}A^s(a,b,c,0)}
  \mathrm{Re}\,\lambda<-2\nu
\] for some $\nu>0$ and \beq{cond_E}
  E=0\quad\text{on }\{a=0\}\cup \{b=0\}.
\] 
\end{thm}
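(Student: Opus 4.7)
The plan is to prove Theorem \ref{thm_fenichel_coordinate} by constructing the coordinate change geometrically from the invariant manifolds supplied by Theorem \ref{thm_fenichel_invariant}, augmented by Fenichel's third theorem on invariant stable and unstable foliations. First I would invoke Theorem \ref{thm_fenichel_invariant} to obtain $\mathcal S_\epsilon$, $W^u_\epsilon(\mathcal S_\epsilon)$ and $W^s_\epsilon(\mathcal S_\epsilon)$. A dimension count shows that at every point of $\mathcal S_\epsilon$ the tangent spaces of $W^u_\epsilon$ and $W^s_\epsilon$ span the ambient $\mathbb R^{n}\times\mathbb R^{l}$, so in a tubular neighborhood of $\mathcal S_\epsilon$ these two submanifolds are transverse and intersect exactly along $\mathcal S_\epsilon$. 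Choose any $C^r$ chart $c$ on $\mathcal S_\epsilon$, pick a $C^r$ $\epsilon$-dependent trivialization of $W^u_\epsilon$ by fiber coordinates $a$ based at $c$, and similarly a trivialization of $W^s_\epsilon$ by $b$ based at $c$. Extending these transverse foliations to a neighborhood produces a $C^r$ diffeomorphism $(a,b,c)\mapsto(x,y)$ onto a neighborhood of $\mathcal S_\epsilon$ in which, by construction, $\{a=0\}=W^s_\epsilon$, $\{b=0\}=W^u_\epsilon$ and $\{a=b=0\}=\mathcal S_\epsilon$.

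Next I would use the invariance of these three manifolds to derive the factored form. Since $W^s_\epsilon$ is flow-invariant, $\dot a$ vanishes identically on $\{a=0\}$; a Hadamard-lemma argument then writes $\dot a=A^u(a,b,c,\epsilon)a$ for some $C^{r-1}$ matrix $A^u$, with the $C^{r-2}$ loss coming from the differentiation involved. The same argument applied to $W^u_\epsilon$ yields $\dot b=A^s(a,b,c,\epsilon)b$. At $\epsilon=0$ and on $\mathcal S_0$ the matrix $A^u(0,0,c,0)$ equals (up to a linear change of basis) the unstable block of $D_xf(x,y,0)|_{\mathcal S_0}$, and similarly $A^s(0,0,c,0)$ equals the stable block; by normal hyperbolicity their spectra lie in $\{\mathrm{Re}\,\lambda>0\}$ and $\{\mathrm{Re}\,\lambda<0\}$ respectively, and continuity in $(a,b,c,\epsilon)$ on a sufficiently small neighborhood and for $\epsilon$ small enough produces a uniform $\nu>0$ satisfying \eqref{cond_spec}.

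For the slow equation, invariance of $\mathcal S_\epsilon$ under \eqref{sf_xy} forces $\dot c|_{\{a=b=0\}}=\epsilon h(c)$ for a $C^{r-1}$ function $h$, so one can write $\dot c=\epsilon\bigl(h(c)+E(a,b,c,\epsilon)\bigr)$ with $E(0,0,c,\epsilon)=0$. The stronger statement \eqref{cond_E}, that $E$ vanishes on all of $\{a=0\}\cup\{b=0\}$, is where Fenichel's invariant foliation theorem is essential: it provides $C^{r-1}$ foliations of $W^s_\epsilon$ and $W^u_\epsilon$ by stable resp. unstable fibers over base points in $\mathcal S_\epsilon$, and the flow maps fibers to fibers over the slow image of the base point. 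Taking the $b$-coordinate to parameterize each stable fiber over $(0,0,c)$ and the $a$-coordinate to parameterize each unstable fiber over $(0,0,c)$, this foliation-invariance translates exactly into the statement that $\dot c$ at $(0,b,c,\epsilon)$ depends only on $(c,\epsilon)$ and agrees with $\epsilon h(c)$, and symmetrically on $\{b=0\}$, which gives \eqref{cond_E}.

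The main obstacle is the correct geometric construction of the chart, namely verifying that the fiber trivializations of $W^u_\epsilon$ and $W^s_\epsilon$ can be assembled into a single $C^r$ change of variables in which both the manifolds and the invariant foliations sit in the claimed coordinate form simultaneously; routine but careful tracking of regularity shows the coordinate change is $C^r$ while the induced coefficients $A^u$, $A^s$, $h$, $E$ are only $C^{r-2}$, which accounts for the regularity loss in the statement. Once this chart is in hand, the algebraic steps (Hadamard factorization, spectral continuity, and foliation-invariance) are essentially immediate.
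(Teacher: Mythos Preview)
Your proposal is correct and follows the standard geometric construction of the Fenichel normal form: straighten $W^u_\epsilon$ and $W^s_\epsilon$ to $\{b=0\}$ and $\{a=0\}$, use Hadamard factorization for the factored form of $\dot a$ and $\dot b$, and invoke the invariant stable/unstable foliations to force $E=0$ on $\{a=0\}\cup\{b=0\}$. The paper itself does not give a proof at all; it simply cites \cite[Section 3.5]{Jones:1995} and \cite[Proposition 1]{Jones:2009}, and what you have sketched is precisely the argument developed in those references, so there is no meaningful difference in approach to compare.
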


\begin{proof}
See \cite[Section 3.5]{Jones:1995} or \cite[Proposition 1]{Jones:2009}.
\end{proof}

The family of trajectories for \eqref{slow_xy}
forms a foliation of $\mathcal S_0$.
The following theorem says that
this induces a foliation of
$W^u_\epsilon(\mathcal S_\epsilon)$ and $W^s_\epsilon(\mathcal S_\epsilon)$.

\begin{thm}[Fenichel's Theorem 3]
\label{thm_foliation}
Suppose the assumptions in Theorem \ref{thm_fenichel_invariant} hold.
Let $\Lambda_0$ be a submanifold in $\mathcal S_0$
which is locally invariant under \eqref{slow_xy}.
Then there exist locally invariant manifolds $\Lambda_\epsilon$,
$W^s_\epsilon(\Lambda_\epsilon)$,
and $W^u_\epsilon(\Lambda_\epsilon)$
for \eqref{sf_xy}
which are $C^{r-2}$ $O(\epsilon)$-close to
$\Lambda_0$,
$W^s_0(\Lambda_0)$,
and $W^u_0(\Lambda_0)$, respectively.
Moreover, for any continuous families of compact sets
$\mathcal I_\epsilon\subset W^u_\epsilon(\Lambda_\epsilon)$,
$\mathcal J_\epsilon\subset W^s_\epsilon(\Lambda_\epsilon)$,
$\epsilon\in [0,\epsilon_0]$,
there exist positive constants $C$ and $\nu$
such that \eqref{est_dist}
holds with $\mathcal S_\epsilon$ replaced by $\Lambda_\epsilon$.
Suppose in addition that
$S_0$ is invariant under \eqref{sf_xy} for each $\epsilon$.
Then $\Lambda_\epsilon$ can be chosen to be $\Lambda_0$.
\end{thm}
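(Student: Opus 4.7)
The plan is to reduce to the normal form of Theorem \ref{thm_fenichel_coordinate} and then construct $\Lambda_\epsilon$ inside the straightened critical manifold, with the stable and unstable manifolds built as unions of invariant fast fibers based at points of $\Lambda_\epsilon$. First I would apply Theorem \ref{thm_fenichel_coordinate} to put the system into the form \eqref{sf_abc} via the $C^r$ $\epsilon$-dependent coordinate change $(x,y)\mapsto (a,b,c)$. In these straightened coordinates $\mathcal{S}_\epsilon$ corresponds to $\{a=b=0\}$, $W^s_\epsilon(\mathcal{S}_\epsilon)$ to $\{a=0\}$, and $W^u_\epsilon(\mathcal{S}_\epsilon)$ to $\{b=0\}$; moreover \eqref{cond_E} implies that the flow restricted to $\mathcal{S}_\epsilon$ reduces exactly to $\dot c = \epsilon h(c)$, with no coupling to $(a,b)$.

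Next I would transfer $\Lambda_0$ into $c$-space. Let $\Lambda_0^c$ denote its image under the coordinate change at $\epsilon=0$. The hypothesis that $\Lambda_0$ is locally invariant under \eqref{slow_xy} translates, after rescaling time, into local invariance of $\Lambda_0^c$ under $c'=h(c)$. Because the restricted flow on $\mathcal{S}_\epsilon$ is precisely $\dot c=\epsilon h(c)$, the set $\{a=b=0,\, c\in\Lambda_0^c\}$ is locally invariant for every small $\epsilon\ge 0$ in the straightened coordinates. I would take $\Lambda_\epsilon$ to be this set pulled back to the original $(x,y)$-coordinates. Since the coordinate change is $C^r$ in $\epsilon$ and reduces to the identification of $\mathcal{S}_0$ with $\{a=b=0\}$ when $\epsilon=0$, the resulting $\Lambda_\epsilon$ is automatically $C^{r-2}$ $O(\epsilon)$-close to $\Lambda_0$.

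For the stable and unstable manifolds of $\Lambda_\epsilon$, I would invoke the Fenichel invariant fiber foliation: to each base point $p\in\mathcal{S}_\epsilon$ one associates a strong stable fiber $\mathcal{F}^s_\epsilon(p)\subset W^s_\epsilon(\mathcal{S}_\epsilon)$ and a strong unstable fiber $\mathcal{F}^u_\epsilon(p)\subset W^u_\epsilon(\mathcal{S}_\epsilon)$, with the invariance property $\mathcal{F}^{s/u}_\epsilon(p)\cdot t = \mathcal{F}^{s/u}_\epsilon(p\cdot t)$ and with all trajectories on $\mathcal{F}^s_\epsilon(p)$ converging exponentially to that of $p$. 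Existence of this foliation is proved by a graph transform on the bundle of candidate fibers over $\mathcal{S}_\epsilon$, leveraging the spectral gap \eqref{cond_spec} between the $O(1)$ hyperbolic rates and the $O(\epsilon)$ tangential rate. With the fibers in hand, I set $W^s_\epsilon(\Lambda_\epsilon)=\bigcup_{p\in\Lambda_\epsilon}\mathcal{F}^s_\epsilon(p)$ and similarly for the unstable direction. Local invariance follows from $\Lambda_\epsilon$ being flow-invariant together with the fiber invariance property; the $O(\epsilon)$-closeness follows from the corresponding closeness of $\Lambda_\epsilon$ and of the fibers; and the estimates \eqref{est_dist} with $\mathcal{S}_\epsilon$ replaced by $\Lambda_\epsilon$ come from combining the exponential contraction along each fiber with the fact that base points remain in $\Lambda_\epsilon$ by invariance.

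The main obstacle is the construction and smoothness of the invariant fiber foliation. The fibers are transverse to a slow direction whose contraction is only $O(\epsilon)$, so the graph transform requires a careful spectral-gap estimate, and this is precisely the place where two derivatives are lost, yielding $C^{r-2}$ regularity rather than $C^r$. The final assertion, that $\Lambda_\epsilon$ may be chosen equal to $\Lambda_0$ whenever $\mathcal{S}_0$ is invariant under \eqref{sf_xy} for every $\epsilon$, follows from the remark after Theorem \ref{thm_fenichel_invariant}: in that situation $\mathcal{S}_\epsilon$ can be taken to be $\mathcal{S}_0$ itself, and the coordinate change leaves the $c$-component of $\Lambda_0$ unchanged, so no $\epsilon$-perturbation of $\Lambda_0$ is needed.
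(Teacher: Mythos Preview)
Your approach is correct and follows essentially the same route as the paper: pass to Fenichel coordinates via Theorem~\ref{thm_fenichel_coordinate}, identify $\Lambda_\epsilon$ with $\{a=b=0,\ c\in\Lambda_0\}$, and take $W^{s}_\epsilon(\Lambda_\epsilon)$, $W^{u}_\epsilon(\Lambda_\epsilon)$ as the pre-images of $\{a=0,\ c\in\Lambda_0\}$ and $\{b=0,\ c\in\Lambda_0\}$, with the final clause handled by the remark after Theorem~\ref{thm_fenichel_invariant}. The only difference is that you invoke the fiber foliation as a separate graph-transform construction; this detour is unnecessary here, because condition~\eqref{cond_E} already makes the $c$-dynamics on $\{a=0\}$ (resp.\ $\{b=0\}$) independent of $b$ (resp.\ $a$), so in Fenichel coordinates the strong fibers are precisely the straight slices $\{a=0,\ c=c_0\}$ and $\{b=0,\ c=c_0\}$, and the paper simply writes down these sets and reads off \eqref{est_dist} directly from \eqref{cond_spec}.
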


\begin{proof}
Using Fenichel's coordinates $(a,b,c)$
in Theorem \ref{thm_fenichel_coordinate} for the splitting of $\mathcal S_0$,
we can take $W_\epsilon^u(\Lambda_\epsilon)$ and $W_\epsilon^s(\Lambda_\epsilon)$
to be the pre-images of the sets $\{(a,b,c): a=0, c\in \Lambda_0\}$
and $\{(a,b,c): b=0, c\in \Lambda_0\}$, respectively,
in $(x,y)$-space.
From \eqref{cond_spec} we obtain \eqref{est_dist}
with $\mathcal S_\epsilon$ replaced by $\Lambda_\epsilon$.
Suppose $S_0$ is invariant under \eqref{sf_xy} for each $\epsilon$,
then from the remark after Theorem \ref{thm_fenichel_invariant},
we can take $\mathcal S_\epsilon=\mathcal S_0$
and hence $\Lambda_\epsilon=\Lambda_0$
\end{proof}

The system \eqref{sf_abc} is called a \emph{Fenichel normal form} for \eqref{sf_xy},
and the variables $(a,b,c)$ are called \emph{Fenichel coordinates}.

%%%%%%%%%%%%%%%%%%%%%%%%%%%%%%%%
\subsection{Silnikov Boundary Value Problem} \label{subsec_silnikov}
%%%%%%%%%%%%%%%%%%%%%%%%%%%%%%%%

We have seen in Section \ref{subsec_fenichel}
that fast-slow systems \eqref{sf_xy}
can locally be converted into normal forms \eqref{sf_abc},
where $A^u$ and $A^s$ satisfy the gap condition \eqref{cond_spec},
and $E$ is a small term satisfying \eqref{cond_E}.
If we append the system with the equation $\dot\epsilon=0$
and then replace $c$ by $\tilde c=(c,\epsilon)$,
we obtain a system of the form \beq{deq_abc_center}
  &\dot{a}= A^u(a,b,\tilde c)a\\
  &\dot{b}= A^s(a,b,\tilde c)b\\
  &\dot{\tilde c}= \tilde h(\tilde c)+ E(a,b,\tilde c),
\] for which \eqref{cond_spec} and \eqref{cond_E} are satisfied
with $E$ replaced by $\tilde E$.
For convenience, we will drop the tilde notation in \eqref{deq_abc_center}
in the remaining discussion.

A Silnikov problem is the system \eqref{deq_abc_center}
along with boundary data of the form \beq{bc_silnikov}
  (b,c)(0)=(b^0,c^0),\quad
  a(T)=a^1,
\] where $T\ge 0$.

The critical manifold for \eqref{deq_abc_center} is $\{a=0,b=0\}$,
on which the system is governed by
the limiting slow system \beq{deq_c_critical}
  \dot{c}= h(c).
\] For a solution $(a(t),b(t),c(t))$ to
the Silnikov boundary value problem \eqref{deq_abc_center} and \eqref{bc_silnikov},
from conditions \eqref{cond_spec} and \eqref{cond_E},
it is natural to expect that
$a(t)$ and $b(t)$ decay to $0$ in backward time and forward time, respectively,
and that $c(t)$ is approximately the solution of \eqref{deq_c_critical}.
A theorem from \cite{Schecter:2008a}
asserts that this is the case:

\begin{thm}[Generalized Deng's Lemma \cite{Schecter:2008a}]
\label{thm_bvp_h}
Consider the system \eqref{deq_abc_center}
satisfying \eqref{cond_spec} and \eqref{cond_E}
with $C^r$ coefficients, $r\ge 1$,
defined on the closure of a bounded open set
$B_{k,\Delta}\times B_{m,\Delta}\times V
\subset \mathbb R^k\times \mathbb R^m\times \mathbb R^l$,
where $B_{k,\Delta}=\{a\in\mathbb R^k: |a|<\Delta\}$,
$\Delta>0$, and $V$ is a bounded open set in $\mathbb R^l$.

Let $K_0$ and $K_1$ be compact subsets of $V$
such that $K_0\subset \mathrm{Int}(K_1)$.
For each $c^0\in K_0$ let $J_{c^0}$ be the maximal interval
such that $\phi(t,c^0)\in \mathrm{Int}(K_1)$ for all $t\in J_{c^0}$,
where $\phi(t,c^0)$ is the solution of \eqref{deq_c_critical} with initial value $c^0$.
Let $\nu>0$ be the number in \eqref{cond_spec}.
Suppose there exists $\beta>0$ such that $\tilde \nu:= \nu- r\beta>0$ and \[
  |\phi(t,c^0)|\le Me^{\beta|t|}\quad\forall\; t\in J_{c^0}.
\] Then there is a number $\delta_0>0$
such that if $|a^1|<\delta_0$, $|b^0|<\delta_0$, $c^0\in V_0$, and ${T}>0$ is in $J_{c^0}$,
then the Silnikov boundary value problem \eqref{deq_abc_center} and \eqref{bc_silnikov}
has a solution $(a,b,c)(t,{T},a^1,b^0,c^0)$ on the interval $0\le t\le {T}$.
Moreover, there is a number $K>0$ such that for all $(t,{T},a^1,b^0,c^0)$ as above
and for all multi-indices $\mathbf{i}$ with $|\mathbf{i}|\le r$, \beq{est_Di_bvp}
  &|D_{\mathbf{i}}a(t,{T},a^1,b^0,c^0)|\le Ke^{-\tilde\nu ({T}-t)}\\
  &|D_{\mathbf{i}}b(t,{T},a^1,b^0,c^0)|\le Ke^{-\tilde\nu t}\\
  &|D_{\mathbf{i}}c(t,{T},a^1,b^0,c^0)-D_{\mathbf{i}}\phi(t,c^0)|
  \le Ke^{-\tilde\nu T}.
\]
\end{thm}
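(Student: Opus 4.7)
The plan is to recast the Silnikov boundary value problem as a fixed-point problem on a weighted function space whose norm already encodes the exponential profile claimed in \eqref{est_Di_bvp}. Specifically, I would work in the Banach space of continuous maps $(a,b,c):[0,T]\to \mathbb{R}^k\times\mathbb{R}^m\times\mathbb{R}^l$ with norm
\[
\|(a,b,c)\|_T
= \sup_{t\in[0,T]} e^{\tilde\nu(T-t)}|a(t)|
+ \sup_{t\in[0,T]} e^{\tilde\nu t}|b(t)|
+ \sup_{t\in[0,T]} |c(t)-\phi(t,c^0)|,
\]
and define a map $\mathcal{T}(a,b,c)=(\tilde a,\tilde b,\tilde c)$ by variation of parameters along the candidate $(a,b,c)$: $\tilde a(t)=\Psi^u(t,T)a^1$ where $\Psi^u$ is the fundamental solution of $\dot X=A^u(a(s),b(s),c(s))X$, $\tilde b(t)=\Psi^s(t,0)b^0$ analogously, and $\tilde c(t)=\phi(t,c^0)+\int_0^t\Xi(t,s)E(a(s),b(s),c(s))\,ds$, with $\Xi$ the solution operator of the variational equation of $\dot c=h(c)$ along $\phi(\cdot,c^0)$. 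The gap condition \eqref{cond_spec} provides $|\Psi^u(t,T)|\le C e^{-2\nu(T-t)}$ and $|\Psi^s(t,0)|\le C e^{-2\nu t}$, so $\tilde a$ and $\tilde b$ automatically carry the desired decay rate $\tilde\nu$ with margin to spare.

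The structural hypothesis \eqref{cond_E} is what makes the $c$-component harmless: since $E$ vanishes on both $\{a=0\}$ and $\{b=0\}$, a first-order Taylor expansion gives $|E(a,b,c)|\le C|a|\,|b|$. For any $(a,b,c)$ in a ball of the weighted space this yields $|E(a(s),b(s),c(s))|\le C e^{-\tilde\nu T}$ uniformly in $s$. Combined with the Gronwall-type bound on $|\Xi(t,s)|$ extracted from $|\phi(t,c^0)|\le Me^{\beta|t|}$, one obtains $|\tilde c(t)-\phi(t,c^0)|\le C e^{-(\tilde\nu-\beta)T}\le C e^{-\tilde\nu T/2}$ after possibly shrinking $\tilde\nu$ by a harmless amount. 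A parallel estimate on the differences $\mathcal{T}(a,b,c)-\mathcal{T}(a',b',c')$ shows that $\mathcal{T}$ is a contraction on a small closed ball of $\mathcal{B}_T$ once $\delta_0$ is chosen small enough, producing a unique fixed point which is the sought solution and which already satisfies the $|\mathbf{i}|=0$ case of \eqref{est_Di_bvp}.

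For the higher-order estimates $|\mathbf{i}|\le r$, I would differentiate the fixed-point equation formally with respect to the parameters $(T,a^1,b^0,c^0)$ and verify inductively that each derivative satisfies an analogous linear integral equation whose forcing is built from lower-order derivatives and whose operator still contracts. The key technical point, and the main obstacle, is that differentiation brings in derivatives of $\phi(t,c^0)$ and of the propagators $\Psi^u,\Psi^s$, and these can grow by a factor as large as $e^{\beta|t|}$ per derivative because of the slow-flow growth hypothesis. Taking $r$ derivatives therefore costs up to $e^{r\beta T}$, but each term still inherits one of the hyperbolic gains $e^{-2\nu(T-t)}$, $e^{-2\nu t}$, or the product $|a|\,|b|\le e^{-2\nu T}$ from the $E$ structure. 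The net exponent is $-(2\nu-r\beta)T$ or better, which is dominated by $-\tilde\nu T=-(\nu-r\beta)T$ precisely under the stated hypothesis $\tilde\nu>0$. Carrying out this bookkeeping carefully — tracking how much of the spectral gap $\nu$ is spent on the contraction, on absorbing the $E$-nonlinearity, and on the $r$-fold differentiation — is the substance of the proof in \cite{Schecter:2008a}.
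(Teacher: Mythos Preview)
The paper does not supply a proof of this theorem at all: it is quoted verbatim as a known result from \cite{Schecter:2008a} and used as a black box, so there is no in-paper argument to compare your proposal against. Your sketch is in fact a faithful outline of the strategy in \cite{Schecter:2008a} (and, before it, of Deng's original lemma): a contraction on a weighted function space built from the variation-of-parameters formulas, with the product structure $|E|\le C|a|\,|b|$ coming from \eqref{cond_E} controlling the drift in $c$, and the loss of $r\beta$ in the exponent arising from differentiating the slow flow $r$ times. So your proposal is correct in spirit and matches the cited source, but there is nothing in the present paper to weigh it against.
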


%%%%%%%%%%%%%%%%%%%%%%%%%%%%%%%%%%%
\subsection{The Corner Lemma}
\label{subsec_corner}
%%%%%%%%%%%%%%%%%%%%%%%%%%%%%%%%%%%
\begin{figure}[t]
\centering
\begin{parbox}{.48\textwidth}{\centering
\boxed{\includegraphics[trim = 5cm 7.5cm 4cm 7.4cm, clip, width=.47\textwidth]{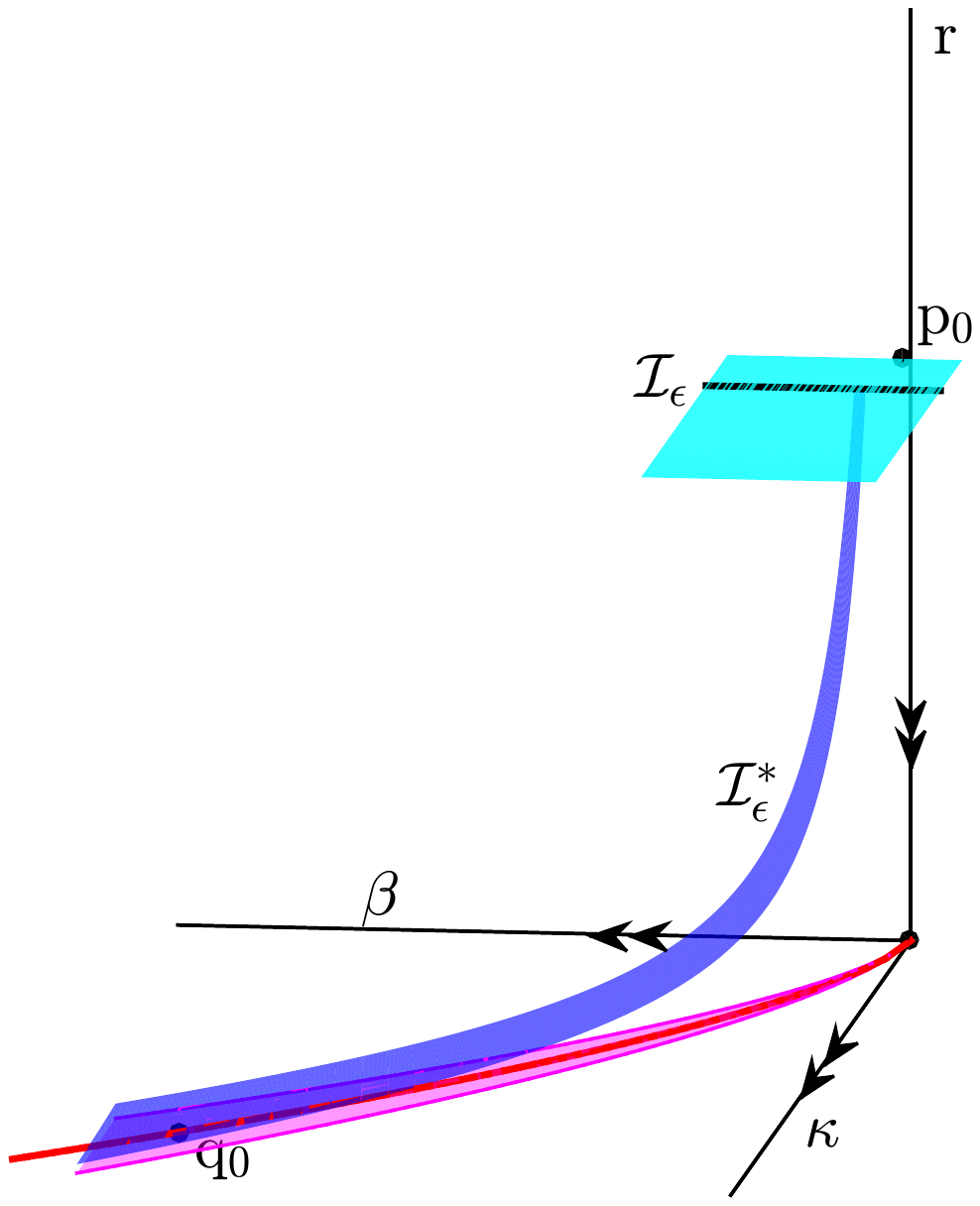}
}\\(a)}\end{parbox}\;
\begin{parbox}{.48\textwidth}{\centering
\boxed{
\includegraphics[trim = 6.5cm 7.5cm 2.5cm 7.4cm, clip, width=.47\textwidth]{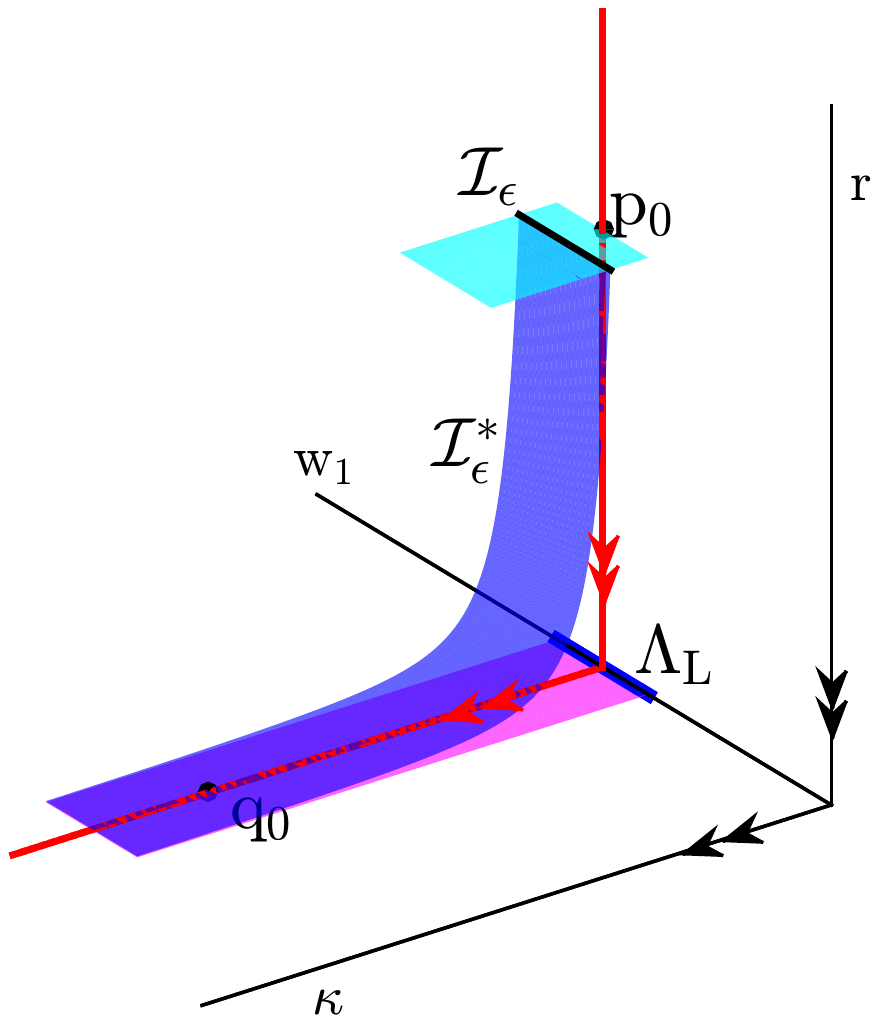}
}\\ (b)}\end{parbox}
\caption{The Corner Lemma with $a=(\beta,\kappa)$, $b=r$ and $c=(w_1,w_2,\xi)$.
The projections of the dynamics  for this $6$D system
in $(\beta,\kappa,r)$- and $(\kappa,r,w_1)$-spaces
are illustrated in (a) and (b), respectively.
Note that $\mathcal I_\epsilon^*$ expands exponentially in the $\beta$-direction,
but in the $w_1$-direction it changes only mildly.}
\label{fig_corner}
\end{figure}

The Corner Lemma was first asserted in \cite{Schecter:2004},
but its author later pointed out \cite[Remark 2.4]{Schecter:2008}
that the proof was flawed and needed to be reworked.
In Theorem \ref{thm_corner} we
modify both the statement and the proof of the original lemma.
In our modified version,
the required assumptions are more restricted,
but they are already enough for our purpose. %in Chapter \ref{ch_KK}.

First we state 
the special case of Theorem \ref{thm_bvp_h}
with $h\equiv 0$ in \eqref{deq_abc_center} as follows.
\begin{thm}\label{thm_bvp_h0}
Consider a system of the form \beq{deq_abc}
  &\dot{a}= A^u(a,b,c)a\\
  &\dot{b}= A^s(a,b,c)b\\
  &\dot{c}= E(a,b,c),
\] satisfying \eqref{cond_spec} and \eqref{cond_E}
with $C^r$ coefficients, $r\ge 1$,
defined on the closure of a bounded open set 
$\mathcal B= B_{k,\Delta}\times B_{m,\Delta}\times V
\subset \mathbb R^k\times \mathbb R^m\times \mathbb R^l$.
Then for any $(a^1,b^0,c^0)\in \mathcal B$ and $T\ge 0$,
the Silnikov boundary value problem \eqref{deq_abc} and \eqref{bc_silnikov} has a unique solution,
denoted by $(a,b,c)(t;T,a^1,b^0,c^0)$, $t\in [0,T]$.
Moreover, if we set \[
  p_T= (a,b,c)(0;T,a^1,b^0,c^0),\quad
  q_T= (a,b,c)(T;T,a^1,b^0,c^0)
\] and write $p_T=(a^\din_T,b^0,c^0)$ and $q_T=(a^1,\hat{b}_T,\hat{c}_T)$,
then \beq{est_pin_bvp}
  \|(a^\din_T,\hat{b}_T,\hat{c}_T-c^0)\|_{C^r(\mathcal B)}
  \le \tilde C e^{-\mu T}
\] for some positive constants $\tilde C$ and $\mu$.
\end{thm}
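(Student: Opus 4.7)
The plan is to recognize Theorem \ref{thm_bvp_h0} as the $h \equiv 0$ specialization of the Generalized Deng's Lemma (Theorem \ref{thm_bvp_h}) and to supply the uniqueness claim separately. I will first verify the hypotheses of Theorem \ref{thm_bvp_h} in this simplified setting, then read off the desired $C^r$ estimate from \eqref{est_Di_bvp}, and finally argue uniqueness using the spectral gap.

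With $h \equiv 0$ the flow of the limiting slow system \eqref{deq_c_critical} is stationary: $\phi(t, c^0) \equiv c^0$. Consequently, for any compact $K_0 \subset \mathrm{Int}(K_1) \subset V$ and any $c^0 \in K_0$ the maximal interval is $J_{c^0} = \mathbb{R}$, and the growth bound $|\phi(t,c^0)| \le M e^{\beta|t|}$ holds for every $\beta > 0$ with $M = \sup_{K_1} |c|$; in particular I may choose $\beta$ small enough that $\tilde{\nu} = \nu - r\beta > 0$. Theorem \ref{thm_bvp_h} then gives, for all sufficiently small $|a^1|, |b^0|$ and each $T > 0$, a solution $(a, b, c)(t; T, a^1, b^0, c^0)$ on $[0, T]$ satisfying \eqref{est_Di_bvp}. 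Evaluating the first bound at $t = 0$ and the remaining two at $t = T$, and using $\phi(t, c^0) = c^0$, gives $|D_{\mathbf{i}} a^\din_T|, |D_{\mathbf{i}} \hat{b}_T|, |D_{\mathbf{i}}(\hat{c}_T - c^0)| \le K e^{-\tilde{\nu} T}$, which is \eqref{est_pin_bvp} with $\mu = \tilde{\nu}$ and $\tilde{C} = 3K$.

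The main obstacle I anticipate is uniqueness, which Theorem \ref{thm_bvp_h} does not explicitly assert. My plan is to show that any solution of \eqref{deq_abc} and \eqref{bc_silnikov} with small boundary data automatically satisfies exponential a priori bounds analogous to those above, after which the contraction-mapping scheme underlying Deng's Lemma applies within the weighted function space and forces uniqueness. The a priori bounds come from the spectral gap \eqref{cond_spec}: integrating $\dot{a} = A^u(a, b, c) a$ backward from $t = T$ and applying Gronwall gives $|a(t)| \le C e^{-\nu(T - t)} |a^1|$, and a symmetric forward argument on $\dot{b} = A^s(a, b, c) b$ yields $|b(t)| \le C e^{-\nu t} |b^0|$. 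Condition \eqref{cond_E} combined with Hadamard's lemma then gives $E(a, b, c) = O(|a||b|)$, so $\dot{c}$ is exponentially integrable and the map sending a candidate $(a,b,c)$ to the right-hand side of its variation-of-constants representation contracts in the weighted norm for small data, forcing any two solutions to coincide.
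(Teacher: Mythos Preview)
Your approach is exactly the paper's: Theorem~\ref{thm_bvp_h0} is introduced there simply as the $h\equiv 0$ case of Theorem~\ref{thm_bvp_h}, with no separate proof, and your verification that $\phi(t,c^0)\equiv c^0$ lets one take $\beta>0$ arbitrarily small (hence $\tilde\nu>0$, $J_{c^0}=\mathbb R$) is precisely the reduction intended. Your uniqueness sketch via backward/forward Gronwall and $E=O(|a||b|)$ from \eqref{cond_E} goes beyond what the paper supplies and is sound.

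One small point: Theorem~\ref{thm_bvp_h} delivers solutions only for $|a^1|,|b^0|<\delta_0$, while Theorem~\ref{thm_bvp_h0} as stated claims the result on all of $\mathcal B$. You note this (``sufficiently small'') but do not close the gap, and neither does the paper; in the application (proof of Theorem~\ref{thm_corner}) the boundary data $a^1$ lies near $a(q_0)$, which need not be small. This is easily patched by flowing forward/backward a bounded time to reach the small-data regime, or by taking $\Delta$ small from the outset, but strictly speaking the statement as written is slightly stronger than what either argument yields directly.
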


We will consider special cases of the system \eqref{deq_abc}
for which there is an invariant manifold of codimension $1$
which is transverse to an unstable direction.
For definiteness, we assume $\{a_k=0\}$ to be invariant under \eqref{deq_abc},
and the matrix-valued function $A^u(a,b,c)$ is of the form \beq{def_Au0}
  A^u= \begin{pmatrix}
    A_0^u& *\\
    0& \lambda_k
  \end{pmatrix}
\] where $A_0^u$ is a $(k-1)\times(k-1)$ matrix function
and $\lambda_k$ is a positive scalar function.

Using Theorem \ref{thm_bvp_h0},
we will prove the following:

\begin{thm}[Corner Lemma]\label{thm_corner}
Consider \eqref{deq_abc} defined
on the closure of
a bounded open set
$B_{k,\Delta}\times B_{m,\Delta}\times V
\subset \mathbb R^k\times \mathbb R^m\times \mathbb R^l$,
where the coefficients $A^u$, $A^s$ and $E$ are $C^r$ for some $r\ge 3$,
and $A^u$ is of the form \eqref{def_Au0}.
Assume \eqref{cond_spec} and \beq{cond_E_weak}
  E(a,b,c)=0
  \quad\text{on }\;
  \{a=0\}\cap \{b=0\}.
\] Let $\Lambda\subset V$ be a $\sigma$-dimensional $C^r$ manifold,
$0\le \sigma\le l$,
and let $\mathcal I$ be a $C^r$ manifold of the form \beq{def_Icorner}
  \mathcal I
  = \{(a,b,c): |a|<\Delta_1, b=b^0, c=c^0+ \theta(a,c^0)a, c^0\in \Lambda\},
\] where $0<\{\Delta_1,|b^0|\}<\Delta$,
and $\theta$ is a $(l\times k)$-matrix function.
Let $\mathcal I_\epsilon= \mathcal I\cap \{a_k=\epsilon\}$.
Denote ${\mathcal I}_\epsilon^*= {\mathcal I}_\epsilon\cdot [0,\infty)$.
Then the following holds:
Fix any $q_0\in W^u(\Lambda)$ with positive $a_k$-coordinate.
Then there exists a neighborhood $V_0$ of $q_0$
satisfying that \beq{Ieps_V0_corner}
  \text{
    $\mathcal I_\epsilon^*\cap V_0$
    is $C^{r-3}$ close to $W^u(\Lambda)\cap V_0$
  }
\] as $\epsilon\to 0$. See Fig \ref{fig_corner}.

Furthermore,
given any sequence of points $q_\epsilon\in \mathcal I_\epsilon^*\cap V_0$,
$\epsilon\in [0,\epsilon_0]$,
which converges to a point $q_0\in W^u(\Lambda_L)$ as $\epsilon\to 0$,
let $p_\epsilon\in \mathcal I_\epsilon$ and $T_\epsilon>0$
be such that $q_\epsilon= p_\epsilon\cdot T_\epsilon$,
and let $p_0$ be the unique point in $\mathcal I_0$
satisfying $\pi^s(p_0)=\pi^u(q_0)$,
where $\pi^{s,u}$ are the projections along stable/unstable fibers.
Then $p_\epsilon\to p_0$ as $\epsilon\to 0$, 
and \beq{est_Teps_corner}
  \tilde{C}^{-1}\log\frac1\epsilon
  \le T_\epsilon
  \le \tilde{C}\log\frac1\epsilon
\] for some $\tilde{C}>0$.
\end{thm}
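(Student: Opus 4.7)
The plan is to reduce the entire theorem to the Silnikov boundary value problem of Theorem \ref{thm_bvp_h0}. I would choose a single bounded box $\mathcal B=B_{k,\Delta}\times B_{m,\Delta}\times V$ containing both $\mathcal I$ and a neighborhood of $q_0$, so that every trajectory arc from $\mathcal I_\epsilon$ into that neighborhood is governed by Theorem \ref{thm_bvp_h0}. For each admissible tuple $(T,a^1,b^0,c^0)$ the theorem produces a unique orbit with $p_T=(a^\din_T,b^0,c^0)$ and $q_T=(a^1,\hat b_T,\hat c_T)$ satisfying the exponential bound \eqref{est_pin_bvp}.

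The first step is to parameterize $\mathcal I_\epsilon^*\cap V_0$. A point $q$ near $q_0$ belongs to $\mathcal I_\epsilon^*$ iff there exist $T\ge 0$ and $c_*\in\Lambda$ with $p_T\in\mathcal I_\epsilon$, which reduces to the two scalar/vector equations
\[
(a^\din_T)_k=\epsilon,\qquad c^0=c_*+\theta(a^\din_T,c_*)\,a^\din_T,
\]
while $b^0$ is fixed by the definition of $\mathcal I$. The block-triangular form \eqref{def_Au0} gives a scalar linear equation $\dot a_k=\lambda_k(a,b,c)\,a_k$ along each trajectory, and since $\lambda_k\ge 2\nu>0$ by \eqref{cond_spec}, we obtain bounds $\tilde C^{-1}a^1_k e^{-\Lambda^* T}\le (a^\din_T)_k\le \tilde C a^1_k e^{-2\nu T}$ for $a^1_k>0$. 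An implicit function argument inverts the first equation for a unique $T=T(\epsilon,a^1,c_*)$ satisfying \eqref{est_Teps_corner}, and then the second equation, being a small contraction in $c^0$ because $a^\din_T\to 0$, fixes $c^0=c^0(a^1,c_*,\epsilon)$.

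Substituting back, the map $(a^1,c_*)\mapsto q_T=(a^1,\hat b_T,\hat c_T)$ parameterizes $\mathcal I_\epsilon^*$ near $q_0$. By \eqref{est_pin_bvp} and $T\to\infty$ as $\epsilon\to 0$, we have $\hat b_T\to 0$ and $\hat c_T-c^0\to 0$, while $c^0\to c_*$; since $W^u(\Lambda)\cap V_0$ is itself parameterized by $(a^1,c_*)\mapsto(a^1,0,c_*)$, this yields \eqref{Ieps_V0_corner}. For the second part of the theorem, given $q_\epsilon\to q_0$ the same inverse parameterization uniquely determines $(a^1_\epsilon,c_*^{(\epsilon)})$ and hence $p_\epsilon\in\mathcal I_\epsilon$; passing to $\epsilon=0$ and using uniqueness in the Silnikov BVP yields $p_\epsilon\to p_0$ where $p_0\in\mathcal I_0$ lies on the stable fiber through $\pi^u(q_0)=(0,0,c_*)\in\Lambda$, i.e.\ $\pi^s(p_0)=\pi^u(q_0)$.

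The main obstacle is the derivative accounting. The estimate \eqref{est_pin_bvp} gives $C^r$ bounds that decay like $e^{-\mu T}$, but differentiating the implicit equation $(a^\din_T)_k=\epsilon$ in $(a^1,c_*)$ produces factors of $(\partial_T\Phi)^{-1}$ of order $e^{\lambda T}$ together with polynomial powers of $T\sim\log\epsilon^{-1}$. After $j$ derivatives the residual smallness is of size $T^{O(j)}e^{-(\mu-jC)T}$, so control is lost once $jC>\mu$; this is why only $C^{r-3}$ closeness, rather than $C^r$, is claimed, and why the stronger structural hypothesis \eqref{def_Au0} on $A^u$ is needed to push the quantitative argument through under the weakened vanishing condition \eqref{cond_E_weak}. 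Carefully tracking these losses in the three steps---solving the BVP, inverting for $T$, and evaluating the endpoint map---is precisely the point that the original argument in \cite{Schecter:2004} mishandled.
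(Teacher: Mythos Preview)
Your overall strategy---reduce to the Silnikov boundary value problem and read off the endpoint map---is the same as the paper's, but there is a genuine gap and a misdiagnosis of the derivative loss.

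The gap: you invoke Theorem \ref{thm_bvp_h0} directly, but that theorem requires the strong vanishing condition \eqref{cond_E}, namely $E=0$ on $\{a=0\}\cup\{b=0\}$, whereas the Corner Lemma only assumes the weak condition \eqref{cond_E_weak}, $E=0$ on $\{a=0\}\cap\{b=0\}$. Without \eqref{cond_E} the $c$-variable is not constant along the stable and unstable manifolds, so neither the estimate \eqref{est_pin_bvp} nor your identification $W^u(\Lambda)\cap V_0=\{(a^1,0,c_*)\}$ is available. The paper handles this by first applying Deng's straightening lemma \cite[Lemma 2.2]{Deng:1990} to pass to new coordinates in which \eqref{cond_E} holds; this costs two derivatives, bringing the coefficients down to $C^{r-2}$.

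The misdiagnosis: your account of the derivative loss---factors of $(\partial_T\Phi)^{-1}\sim e^{\lambda T}$ from inverting $(a^\din_T)_k=\epsilon$---is not where the $r-3$ comes from. The paper avoids any implicit-function inversion entirely: after Deng's change it introduces $\tilde c=c-\chi(b)\theta(a,c^0)a$ to flatten $\mathcal I$ to $\{b=b^0,\tilde c=c^0\}$, then rescales time by $\lambda_k^{-1}$ so that the $a_k$-equation becomes $a_k'=a_k$ and $T=\log(a_k^1/\epsilon)$ is explicit. These two further manipulations cost one more derivative, leaving $C^{r-3}$ coefficients, and then Theorem \ref{thm_bvp_h0} applied at that regularity gives \eqref{Ieps_V0_corner} directly with no loss in the endpoint estimate itself. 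So the count $r\to r-3$ is bookkeeping from three preparatory coordinate changes, not from blowup in an inverse function theorem. Your two implicit equations (for $T$ and for $c^0$) are unnecessary once $\mathcal I$ is straightened and time is normalized.
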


\begin{proof}
Under the assumption \eqref{cond_E_weak},
from \cite[Lemma 2.2]{Deng:1990},
there exists a $C^{r-2}$ change of variables
of the form $(a,b,c)\mapsto (a,b,\hat{c})$
so that the new system converted from \eqref{deq_abc},
still denoted by \eqref{deq_abc},
satisfies \eqref{cond_E}.
The change of coordinate is a modification only on $c$,
so $\mathcal I$ is still parametrized as \eqref{def_Icorner}
in the new coordinates.
Therefore, by dropping the hat in $\hat{c}$,
we assume \eqref{cond_E} holds for the system \eqref{deq_abc},
and the coefficients are $C^{r-2}$ functions.

The stable/unstable manifolds for \eqref{deq_abc} are \beq{WuLambda_corner}
  W^s(\Lambda)= \{(a,b,c): b=0\},\quad
  W^u(\Lambda)= \{(a,b,c): a=0\}.
\] From \eqref{cond_E}, the slow variable $c$ is constant on $\{a=0\}\cup\{b=0\}$,
which implies \beq{pi_corner_pf}
  \pi^u(a,0,c)= (0,0,c),\quad
  \pi^s(0,b,c)= (0,0,c).
\] 

Let \beq{def_A_corner}
  \mathcal A=\{a\in \mathbb R^k: |a-a(q_0)|<\Delta_2\}
\] for some positive number $\Delta_2<\frac12\min\{\Delta,|a(q_0)|,a_k(q_0)\}$,
so that $\mathcal A\subset B_{k,\Delta}$,
where $a(q_0)$ and $a_k(q_0)$ denote the $a$- and $a_k$-coordinates of $q_0$.
Choose a smooth real-valued function $\chi(b)$
so that $\chi(b^0)=1$ and $\chi(0)=0$.
Let \beq{def_ctilde}
  &\tilde{c}
  = c- \chi(b)\theta(a,c^0)a.
\] Then from $\chi(b^0)=0$ we have \begin{align}
  &\tilde c
  = c- \theta(a,c^0)a
  \quad\text{on }\; \{b=b^0\}  \label{ctilde_b0}
\intertext{and from $\chi(0)=0$ we have}
  &\tilde c
  = c
  \quad\text{on }\; \{a=0\}\cup \{b=0\}.  \label{ctilde_a0}
\end{align}
From \eqref{ctilde_b0},
the image of $\mathcal I$ in $(a,b,\tilde{c})$-space is \beq{Itilde_para}
  \tilde{\mathcal I}
  = \{(a,b,\tilde{c}): |a|<\Delta_1, b=b^0, \tilde{c}=c^0, c^0\in \Lambda\}.
\] 
From \eqref{cond_spec} we know \beq{est_lambdak}
  \tilde{C}^{-1}< \lambda_k< \tilde{C}
\] for some positive constant $\tilde{C}$.
In $(a,b,\tilde c)$-coordinates,
the system \eqref{deq_abc} is converted to,
after dividing the equation by $\lambda_k$, \beq{deq_abc_tilde}
  &{a}'=\begin{pmatrix}
    \tilde{A}^u_0&*\\  0&1
  \end{pmatrix}a,\quad
   {b}'= \tilde{A}^s\, b,\quad
  {\tilde{c}}'= \tilde{E},
\] for some $C^{r-3}$ coefficients $\tilde{A}^u_0$, $\tilde{A}^s$ and $\tilde{E}$,
where $\prime$ denotes the derivative
with respect to the time variable $\zeta$ defined by \beq{deta_dt}
  d\zeta/d\sigma= \lambda_k,
\] where $\sigma$ is the time variable for \eqref{deq_abc}.
Clearly \eqref{cond_spec} holds
with $A^u$, $A^s$ and $\nu$ replaced by $\tilde{A}^u$, $\tilde{A}^s$
and $\tilde{\nu}:=\nu/\tilde{C}$.
Note that the condition \eqref{cond_E}
means $c$ is constant on $\{a=0\}\cup \{b=0\}$.
From \eqref{ctilde_a0}
we see that
$\tilde{c}$ is also constant on that set.
Hence \eqref{cond_E} holds with $E$ replaced by $\tilde E$.
Thus Theorem \ref{thm_bvp_h0} can be applied to \eqref{deq_abc_tilde}.

By Theorem \ref{thm_bvp_h0},
for any sufficiently large number $T$ and any $(a^1,c^0)\in\mathcal A\times \Lambda$, 
we can set $(a,b,\tilde{c})(t ;T,a^1,b^0,c^0)$, $t\in [0,T]$,
to be the solution of \eqref{deq_abc_tilde} satisfying 
\beq{bc_corner_pf}
  (b,\tilde c)(0)= (b^0,c^0),\quad
  a(T)= a^1.
\] Since the equation for $a_k$ in \eqref{deq_abc_tilde} is $a_k'=a_k$,
by choosing $T=\zeta_\epsilon:=\log(a_k^1/\epsilon)$,
where $a_k^1$ is the $a_k$-coordinate of $a^1$,
the solution corresponding to \eqref{bc_corner_pf} satisfies $a_k(0)=\epsilon$.
We set \[
  \tilde{p}_\epsilon
  = (a,b,\tilde{c})(0;\zeta_\epsilon,a^1,b^0,c^0),\quad
  \tilde{q}_\epsilon
  = (a,b,\tilde{c})(\zeta_\epsilon;\zeta_\epsilon,a^1,b^0,c^0),
\] and let $p_\epsilon$ and $q_\epsilon$
be the images of $\tilde{p}_\epsilon$ and $\tilde{q}_\epsilon$, respectively,
in $(a,b,c)$-space.
From \eqref{Itilde_para} we see that $\tilde{p}_\epsilon\in \tilde{\mathcal I}$,
and hence $p_\epsilon\in {\mathcal I}$.
Since the $a_k$-coordinate of $p_\epsilon$ is $a_k(0)=\epsilon$,
we conclude that $p_\epsilon\in {\mathcal I}_\epsilon$.

Regarding $\tilde{p}_\epsilon$ and $\tilde{q}_\epsilon$
as functions of $(a^1,c^0)\in \mathcal A\times \Lambda$,
using \eqref{est_pin_bvp}
with $T$ and $\nu$ replaced by $\zeta_\epsilon$ and $\tilde{\nu}$,
we have \beq{est_pintilde_corner}
  \|\tilde{p}_\epsilon- (0,b^0,c^0)\|_{C^{r-3}(\mathcal A\times\Lambda)}
  + \|\tilde{q}_\epsilon- (a^1,0,c^0)\|_{C^{r-3}(\mathcal A\times\Lambda)}
  \le C\epsilon^{\tilde\nu}.
\] From \eqref{ctilde_a0} it follows that
the $\tilde c$-coordinates of $\tilde{p}_\epsilon$ and $\tilde{q}_\epsilon$
are $O(\epsilon^{\tilde\nu})$-close to $c^0$ in $C^{r-2}$-norm.
Hence \eqref{est_pintilde_corner} holds
with $\tilde{p}_\epsilon$ and $\tilde{q}_\epsilon$
replaced by $p_\epsilon$ and $q_\epsilon$.
Since $p_\epsilon$ and $q_\epsilon$
parametrize $\mathcal I_\epsilon$ and $\mathcal I_\epsilon^*$
in neighborhoods of $p_0$ and $q_0$,
by \eqref{WuLambda_corner} this proves \eqref{Ieps_V0_corner}.

Next we consider 
the sequences $q_\epsilon$ and $p_\epsilon$ described in the statement.
Write \[
  p_\epsilon= (a^\din_\epsilon,b^\din_\epsilon,c^\din_\epsilon),\quad
  q_\epsilon= (\hat{a}_\epsilon,\hat{b}_\epsilon,\hat{c}_\epsilon),
\] and $q_0= (a^1,0,c^0)$ in $(a,b,c)$-coordinates.
By the definition of $\mathcal I$, we have $b^\din_\epsilon=b^0$.
The assumption $q_\epsilon\to q_0$ gives $\hat{c}_\epsilon\to c^0$,
and then by \eqref{pi_corner_pf}
the assumption $\pi^u(q_0)=\pi^s(p_0)$ implies $p_0= (0,b^0,c^0)$.
From \eqref{est_pintilde_corner}
we have $a^\din_\epsilon=o(1)$ and $c^\din_\epsilon= \hat{c}_\epsilon+o(1)$.
It follows that $c^\din_\epsilon\to c^0$,
and hence $p_\epsilon\to p_0$.

Let $T_\epsilon>0$ be the number such that
$q_\epsilon= p_\epsilon\cdot T_\epsilon$.
Since $p\in\mathcal I_\epsilon$,
the $a_k$-coordinate of $p_\epsilon$ equals $\epsilon$,
so from \eqref{deta_dt} we have \beq{Teps_lambda}
  T_\epsilon
  = \int_0^{\zeta_\epsilon}\frac{1}{\lambda_k}\;d\zeta,\quad\text{where }\;
  \zeta_\epsilon= \log\frac{a_k(q_\epsilon)}{\epsilon}
  = \log\frac{a_k(q_0)+o(1)}{\epsilon}.
\] Inserting \eqref{est_lambdak} in \eqref{Teps_lambda},
we then obtain \eqref{est_Teps_corner}.
\end{proof}

%%%%%%%%%%%%%%%%%%%%%%%%%%%%%%%%
\section{Singular Configuration}
\label{sec_singular_config}
%%%%%%%%%%%%%%%%%%%%%%%%%%%%%%%%
We will find trajectories of limiting subsystems
of the fast-slow system \eqref{sf_u12}
such that the union of those trajectories
forms a singular configuration joining the end states $u_L$ and $u_R$.

%%%%%%%%%%%%%%%%%%%%%%%%%%%%%%%%
\subsection{End States $\mathcal U_L$ and $\mathcal U_R$} \label{subsec_fenichel_UL}
%%%%%%%%%%%%%%%%%%%%%%%%%%%%%%%%
Observe that the system \eqref{fast_u12}
has a normally hyperbolic critical manifold \beq{def_s0}
  \mathcal S_0
  = \big\{
    (u,w,\xi):
    f(u)- \xi u - w= 0,
    \xi\ne \mathrm{Re }(\lambda_\pm(u))
  \big\},
\] where $\lambda_{\pm}(u)$ are the eigenvalues of $Df(u)$,
as defined in \eqref{def_lambda_pm}.
The limiting slow system for \eqref{sf_u12} is \beq{slow_u}
  &0=f(u)-\xi u- w\\
  &w'=-u\\
  &\xi'=1.
\]
From $\mathrm{(H1)}$
we have $s<\mathrm{Re}(\lambda_\pm(u_L))$,
so $(u_L,w_L,s)\in \mathcal S_0$.
Choose $\delta>0$ so that $s+2\delta<\mathrm{Re}(\lambda_\pm(u_L))$,
and set \beq{def_ul}
  \mathcal U_L
  &= (u_L,w_L,s)\myflow{slow_u} (-\infty,\delta]\\
  &= \{(u,w,\xi): u=u_L, w=w_L-\alpha_1 u_L, \xi= s+\alpha_1, \alpha_1\in(-\infty,\delta]\},
\] where $\myflow{slow_u}$
denotes the flow for \eqref{slow_u}.
It is clear that $\mathcal U_L\subset \mathcal S_0$
is normally hyperbolic with respect to \eqref{fast_u12},
and is locally invariant with respect to \eqref{sf_u}.

Note that each point in $\mathcal U_L$
is a hyperbolic equilibrium for the $2$-dimensional system \eqref{fast_u},
and the unstable manifold $W^u_0(\mathcal U_L)$ is naturally defined.

\begin{prop}\label{prop_ul}
Assume $\mathrm{(H1)}$.
Let $\mathcal U_L$ be defined in \eqref{def_ul}.
Fix any $r\ge 1$.
There exists a family of invariant manifolds
$W_\epsilon^u(\mathcal U_L)$
which are $C^k$ $O(\epsilon)$-close to $W_0^u(\mathcal U_L)$
such that for any continuous family $\{\mathcal I_\epsilon\}_{\epsilon\in [0,\epsilon_0]}$ of
compact sets $\mathcal I_\epsilon\subset W_\epsilon^u(\mathcal U_L)$, \beq{est_dist_ul}
  \mathrm{dist}(p\myflow{sf_u} t,\mathcal U_L)
  \le Ce^{\mu t}
  \quad\forall\; p\in \mathcal I_\epsilon,\; t\le 0, \epsilon\in [0,\epsilon_0],
\] for some positive constants $C$ and $\mu$.
\end{prop}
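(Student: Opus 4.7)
The plan is to recognize Proposition \ref{prop_ul} as a direct application of Fenichel's Theorem 3 (Theorem \ref{thm_foliation}), taking the critical manifold $\mathcal S_0$ from \eqref{def_s0} as the base and $\mathcal U_L$ as the locally invariant submanifold $\Lambda_0$. The observation I would exploit to simplify the argument is that $\mathcal U_L$ is invariant not only under the limiting slow system \eqref{slow_u} but under the full system \eqref{sf_u} for \emph{every} $\epsilon \geq 0$; by the remark following Theorem \ref{thm_fenichel_invariant} and the last sentence of Theorem \ref{thm_foliation}, this allows me to take $\mathcal U_{L,\epsilon} = \mathcal U_L$ itself, so only the unstable fibers need to be perturbed.

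First I would verify the three hypotheses. Normal hyperbolicity of $\mathcal S_0$ under \eqref{fast_u} follows because the linearization of the fast equation at $(u,w,\xi) \in \mathcal S_0$ is $Df(u) - \xi I$, whose eigenvalues $\lambda_\pm(u) - \xi$ have nonzero real part precisely under the exclusion $\xi \neq \mathrm{Re}(\lambda_\pm(u))$ built into \eqref{def_s0}. Local invariance of $\mathcal U_L$ under \eqref{slow_u} is automatic from its definition in \eqref{def_ul} as a slow trajectory through $(u_L,w_L,s)$. To check invariance of $\mathcal U_L$ under \eqref{sf_u} for each $\epsilon$: along $\mathcal U_L$ one has $u \equiv u_L$, and $f(u_L) - (s+\alpha_1) u_L - (w_L - \alpha_1 u_L) = f(u_L) - s u_L - w_L = 0$ by \eqref{def_wLwR}, so $\dot u = 0$; meanwhile $\dot w = -\epsilon u_L$ and $\dot \xi = \epsilon$ just shift the parameter $\alpha_1 \mapsto \alpha_1 + \epsilon t$, keeping the orbit inside $\mathcal U_L$.

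With the hypotheses in hand I would apply Theorem \ref{thm_foliation} on an exhausting sequence of compact sub-arcs $\mathcal U_L^{(n)} := \mathcal U_L \cap \{\alpha_1 \geq -n\}$, since Fenichel's theorem is stated for compact objects. Each application produces an invariant unstable manifold that is $C^r$ $O(\epsilon)$-close to the unperturbed one—the vector field is polynomial, so the ambient smoothness can be taken as large as needed to absorb the two-derivative loss in Fenichel's theorem—and the union yields $W^u_\epsilon(\mathcal U_L)$. The backward-time decay \eqref{est_dist_ul} is then precisely the first half of the estimate \eqref{est_dist} in Theorem \ref{thm_foliation}; the rate $\mu$ and constant $C$ depend only on the uniform normal spectral gap along $\mathcal U_L$, which is controlled because $\xi$ stays in a bounded range on any compact piece covering a given family $\{\mathcal I_\epsilon\}$.

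The only obstacle I see is administrative—reconciling the non-compact parameter $\alpha_1$ with Fenichel's compact setup—and it is entirely handled by the exhaustion above, since every continuous compact family $\{\mathcal I_\epsilon\}$ supplied in the statement is confined to the unstable manifold of a single $\mathcal U_L^{(n)}$.
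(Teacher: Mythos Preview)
Your proposal is correct and follows essentially the same route as the paper: both apply Fenichel's Theorem~3 (Theorem~\ref{thm_foliation}) with $\Lambda_0=\mathcal U_L$, after observing that $\mathcal U_L$ is locally invariant under \eqref{sf_u} for every $\epsilon$. The only cosmetic difference is in how the non-compactness of $\mathcal U_L$ is handled: the paper simply notes that the normal spectral gap is uniform (since $\xi-\mathrm{Re}(\lambda_\pm(u_L))<-\delta$ on all of $\mathcal U_L$) and invokes the proof in \cite{Jones:1995} directly, whereas you pass to a compact exhaustion $\mathcal U_L^{(n)}$ and observe that any fixed compact family $\{\mathcal I_\epsilon\}$ lies over a single piece---both arguments are valid and amount to the same thing.
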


\begin{proof}
This follows
from Theorem \ref{thm_foliation}
by taking $\mathcal U_L$ to be $\mathcal{U}_0$.
Although $\mathcal U_L$ is not compact,
it is uniformly normally hyperbolic 
since $\xi-\mathrm{Re}(\lambda_\pm(u_L))<-\delta$ on $\mathcal U_L$,
and the proof of Theorem \ref{thm_foliation}
in \cite[Theorem 4]{Jones:1995} is still valid.
\end{proof}

\begin{rmk}
Proposition \ref{prop_ul}
was also asserted in \cite{Schecter:2004,Liu:2004,Keyfitz:2012}.
\end{rmk}

From $\mathrm{(H1)}$ we also have,
by decreasing $\delta$ if necessary,
$s-2\delta>\mathrm{Re}(\lambda_\pm(u_L))$,
and hence a similar result holds for for the set $\mathcal U_R$ defined by
\beq{def_ur}
  \mathcal U_R
  &= (u_R,w_R,s)\myflow{slow_u} [-\delta,\infty)\\
  &= \{(u,w,\xi): u=u_R, w=w_R-\alpha_2 u_R, \xi= s+ \alpha_2, \alpha_2\in [-\delta,\infty)\}.
\]

\begin{prop}
Assume $\mathrm{(H1)}$.
Let $\mathcal U_R$ be defined by \eqref{def_ur}.
Fix any $k\ge 1$.
There exists a family of invariant manifolds
$W_\epsilon^s(\mathcal U_R)$
which are $C^k$ $O(\epsilon)$-close to $W_0^s(\mathcal U_R)$
such that for any continuous family
$\{\mathcal J_\epsilon\}_{\epsilon\in [0,\epsilon_0]}$ 
of compact sets
$\mathcal J_\epsilon\subset W^s_\epsilon(\mathcal U_R)$,
\beq{est_dist_ur}
  \mathrm{dist}(p\myflow{sf_u} t,\mathcal U_R)
  \le Ce^{-\mu t}
  \quad\forall\; p\in \mathcal J_\epsilon,\; t\ge 0, \epsilon\in [0,\epsilon_0],
\] for some positive constants $C$ and $\mu$.
\end{prop}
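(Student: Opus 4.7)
The plan is to mirror the proof of Proposition \ref{prop_ul} with the roles of stable and unstable manifolds (and of forward and backward time) interchanged, since the statement for $\mathcal U_R$ is the time-reversed analogue of the statement for $\mathcal U_L$. First I would check that $\mathcal U_R\subset \mathcal S_0$: for $(u_R,w_R,s)$ the definition \eqref{def_wLwR} gives $f(u_R)-su_R-w_R=0$, and flowing by the limiting slow system \eqref{slow_u} keeps the trajectory inside $\mathcal S_0$ by definition of $\mathcal S_0$, so every point of $\mathcal U_R$ as parametrized in \eqref{def_ur} satisfies $f(u)-\xi u-w=0$. This also shows $\mathcal U_R$ is locally invariant under \eqref{slow_u}, so Fenichel's Theorem 3 (Theorem \ref{thm_foliation}) applies with $\Lambda_0=\mathcal U_R$.

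Next I would verify uniform normal hyperbolicity along $\mathcal U_R$. The linearization of the fast system \eqref{fast_u} at an equilibrium $(u_0,w,\xi)\in\mathcal S_0$ has eigenvalues $\lambda_\pm(u_0)-\xi$. On $\mathcal U_R$ the parameter $\alpha_2\in[-\delta,\infty)$ and $u_0=u_R$, so using $\mathrm{(H1)}$ together with the choice $s-2\delta>\mathrm{Re}(\lambda_\pm(u_R))$ (which is where the remark immediately before the proposition about shrinking $\delta$ is used), we obtain
\[
\mathrm{Re}(\lambda_\pm(u_R))-\xi \;=\; \mathrm{Re}(\lambda_\pm(u_R))-s-\alpha_2 \;<\; -\delta
\]
for every point of $\mathcal U_R$. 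Thus both eigenvalues at every point of $\mathcal U_R$ have real part bounded above by $-\delta$; in particular they are stable and the hyperbolicity constants are uniform along $\mathcal U_R$. Hence $W_0^s(\mathcal U_R)$ is well-defined in the fast system as the union of local stable manifolds over $\mathcal U_R$.

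Now I would apply Theorem \ref{thm_foliation} with $\Lambda_0=\mathcal U_R$, taking $\mathcal S_0$ to be a normally hyperbolic open subset of the critical manifold containing $\mathcal U_R$. The theorem yields locally invariant $C^k$ manifolds $\mathcal U_{R,\epsilon}$ and $W^s_\epsilon(\mathcal U_{R,\epsilon})$ that are $C^k$ $O(\epsilon)$-close to $\mathcal U_R$ and $W^s_0(\mathcal U_R)$, respectively, and the inequality \eqref{est_dist_ur} follows from the forward-time estimate in \eqref{est_dist}. Since $\mathcal U_R$ is already locally invariant under \eqref{sf_u} for every $\epsilon$ (the whole set $\{u=u_R\}$ is invariant: the first two equations of \eqref{sf_u12} vanish there, and the remaining equations for $w,\xi$ keep the trajectory in this plane), the remark following Theorem \ref{thm_fenichel_invariant} together with the last assertion of Theorem \ref{thm_foliation} allows us to take $\mathcal U_{R,\epsilon}=\mathcal U_R$; this yields the invariant family $W^s_\epsilon(\mathcal U_R)$ as stated.

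The only non-routine point is the non-compactness of $\mathcal U_R$, which keeps us from directly invoking Theorem \ref{thm_foliation} verbatim. I would handle it exactly as Proposition \ref{prop_ul} handles the non-compactness of $\mathcal U_L$: the uniform lower bound $\xi-\mathrm{Re}(\lambda_\pm(u_R))>\delta$ along $\mathcal U_R$ provides uniform spectral gaps, so the standard construction in \cite[Theorem 4]{Jones:1995} carries through without modification, giving uniform constants $C$ and $\mu$ valid on all of $\mathcal U_R$. This is the step that requires the most care, but it is verbatim the argument already used in Proposition \ref{prop_ul}, so I would simply refer to that proof.
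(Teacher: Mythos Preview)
Your approach is essentially the same as the paper's, which gives no separate proof and simply presents the proposition as the time-reversed analogue of Proposition~\ref{prop_ul}. Your added detail (checking $\mathcal U_R\subset\mathcal S_0$, computing the spectral gap, invoking Theorem~\ref{thm_foliation}, and handling non-compactness via uniform hyperbolicity exactly as in the proof of Proposition~\ref{prop_ul}) is all correct.

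One small slip to fix: the set $\{u=u_R\}$ is \emph{not} invariant under \eqref{sf_u12}. The first two equations do not vanish on all of $\{u=u_R\}$; they vanish only where additionally $w=f(u_R)-\xi u_R$. What you actually need, and what is true, is that the curve $\mathcal U_R$ itself is invariant: on $\mathcal U_R$ one has $\dot u=f(u_R)-\xi u_R-w=0$ by construction, and $(\dot w,\dot\xi)=\epsilon(-u_R,1)$ is tangent to the parametrization \eqref{def_ur}. With this correction the invocation of the last sentence of Theorem~\ref{thm_foliation} goes through and you may take $\mathcal U_{R,\epsilon}=\mathcal U_R$ as claimed.
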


%%%%%%%%%%%%%%%%%%%%%%%%%%%%%%%%
\subsection{Intermediate States $\mathcal P_L$ and $\mathcal P_R$}
\label{subsec_hyp_UL}
%%%%%%%%%%%%%%%%%%%%%%%%%%%%%%%%

It is easy to see that $\mathcal P_L$ defined in \eqref{def_pl}
is a normally hyperbolic critical manifold for \eqref{deq_brk},
so $C^k$ unstable and stable manifolds
$W^u(\mathcal P_L)$ and $W^s(\mathcal P_L)$ of $\mathcal P_L$ exist
for any fixed $k\ge 1$.
Note that $\{r=0\}$ and $\{\kappa=0\}$ are invariant under \eqref{deq_brk}
while $\{\beta=\rho_3\}$ is not.
We can straighten $W^u(\mathcal P_L)$ and $W^s(\mathcal P_L)$ 
by modifying $\beta$:

\begin{prop}\label{prop_pl}
Let $W^{u,s}(\mathcal P_L)$
be $C^k$ unstable/stable manifolds of $\mathcal P_L$ for \eqref{deq_brk}, $k\ge 1$.
There exists a $C^k$ function
${\hat\beta}={\hat\beta}(\beta,r,w_1,w_2,\xi)$ such that \beq{a_beta_r0}
  {\hat\beta}= \beta
  \quad\text{when }r=0
\] and $({\hat\beta},r,\kappa,w_1,w_2,\xi)$
is a change of coordinates near $\mathcal P_L$
satisfying \begin{align}
  &W^s(\mathcal P_L)
  = \{({\hat\beta},r,\kappa,w_1,w_2,\xi):
  {\hat\beta}=\rho_3,\kappa=0\}  \label{WsPL_ark}\\
  &W^u(\mathcal P_L)
  = \{({\hat\beta},r,\kappa,w_1,w_2,\xi): r=0\}.  \label{WuPL_ark}
\end{align} Hence \eqref{deq_brk} is converted into \beq{deq_ark}
  &\dot{{\hat\beta}}
  = \big(\tfrac{-2}3\rho_3(\rho_3^2-3)+ h_1\big)(\tilde\beta-\rho_3)\\
  &\dot{r}= \big(\tfrac{-\rho_3^3}6+ h_2\big)r\\
  &\dot{\kappa}= \big(\tfrac{\rho_3^3}6+h_3\big)\kappa\\
  &\dot{w}_1= h_4\\
  &\dot{w}_2= -\kappa\\
  &\dot{\xi}= \kappa r^2,
\] where $h_i$ are $C^{k-1}$ functions
satisfying $h_1,h_2,h_3=O(|({\hat\beta},r,\kappa)|)$
and $h_4=O(|r|\cdot |({\hat\beta},\kappa)|)$
as $|({\hat\beta},\kappa,r)|\to 0$,
and the projection $\pi^s_{\mathcal P_L}$ into $\mathcal P_L$ along stable fibers is \beq{pi_WsPL}
  \pi^s_{\mathcal P_L}(0,r,0,w_1,w_2,\xi)= (0,0,0,w_1,w_2,\xi).
\]
\end{prop}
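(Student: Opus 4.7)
The plan is to straighten $W^s(\mathcal{P}_L)$ inside the invariant hyperplane $\{\kappa=0\}$ by introducing $\hat\beta$ as a graph variable, then derive \eqref{deq_ark} from Taylor factorization using the invariances of $\{r=0\}$, $\{\kappa=0\}$, and $W^s(\mathcal{P}_L)$. To start, read off the hyperbolic structure at $\mathcal{P}_L$ directly from \eqref{deq_brk}: every term of $\dot r$ (resp.\ $\dot\kappa$) carries a factor $r$ (resp.\ $\kappa$), so $\{r=0\}$ and $\{\kappa=0\}$ are invariant, and linearization at any $p\in\mathcal{P}_L$ yields eigenvalues $\lambda_\beta=-\tfrac{2}{3}\rho_3(\rho_3^2-3)>0$, $\lambda_r=-\rho_3^3/6<0$, $\lambda_\kappa=\rho_3^3/6>0$, plus three zeros along $(w_1,w_2,\xi)$. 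Since $\dim W^u(\mathcal{P}_L)=5=\dim\{r=0\}$ with matching tangent spaces at $\mathcal{P}_L$, uniqueness of the center-unstable manifold gives \eqref{WuPL_ark}. The exponential instability of $\kappa$ forces $W^s(\mathcal{P}_L)\subset\{\kappa=0\}$; inside this invariant set $w_1,w_2,\xi$ are conserved and the dynamics reduces to a two-dimensional saddle system in $(\beta,r)$ parameterized by $(w_1,w_2,\xi)$, whose one-dim stable manifold is a graph $\beta=g(r,w_1,w_2,\xi)$ with $g(0,\cdot)\equiv\rho_3$ and joint $C^k$-regularity via the parameter-dependent stable manifold theorem.

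Define $\hat\beta:=\beta-g(r,w_1,w_2,\xi)+\rho_3$. The identity $g|_{r=0}\equiv\rho_3$ gives \eqref{a_beta_r0}; $\partial\hat\beta/\partial\beta=1$ makes $(\hat\beta,r,\kappa,w_1,w_2,\xi)$ a $C^k$ local diffeomorphism; and $\{\hat\beta=\rho_3,\kappa=0\}=W^s(\mathcal{P}_L)$ gives \eqref{WsPL_ark}. For the converted system, invariance of $\{r=0\}$ (resp.\ $\{\kappa=0\}$) forces $\dot r=(-\rho_3^3/6+h_2)r$ (resp.\ $\dot\kappa=(\rho_3^3/6+h_3)\kappa$); invariance of $W^s(\mathcal{P}_L)$ gives $\dot{\hat\beta}=0$ on $\{\hat\beta=\rho_3,\kappa=0\}$, and Taylor factorization delivers $\dot{\hat\beta}=(\lambda_\beta+h_1)(\hat\beta-\rho_3)$ after a small near-identity adjustment of $\hat\beta$ incorporating a $\kappa$-dependent correction (to absorb the residual cross-term), justified by the strong-unstable spectral gap $\lambda_\beta>\lambda_\kappa$. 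The equation $\dot w_1=-\beta\kappa r$ vanishes on $\{r=0\}\cup\{\kappa=0\}$, so $h_4=O(|r|\cdot|(\hat\beta,\kappa)|)$, while $\dot w_2=-\kappa$ and $\dot\xi=\kappa r^2$ are unchanged. Finally, \eqref{pi_WsPL} follows because on $W^s(\mathcal{P}_L)$ the transformed equations give $\dot w_1=\dot w_2=\dot\xi=0$ with $r$ decaying exponentially, so $(\rho_3,r,0,w_1,w_2,\xi)$ flows to $(\rho_3,0,0,w_1,w_2,\xi)\in\mathcal{P}_L$.

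The main obstacle is establishing joint $C^k$-regularity of $g$ over the non-compact parameter range in $(w_1,w_2,\xi)$, and justifying the near-identity $\kappa$-correction of $\hat\beta$ that promotes $\{\hat\beta=\rho_3\}$ to a 5-dim invariant manifold, namely the center-stable-weak-unstable manifold complementing the strong-unstable $\beta$-direction. Once these invariant-manifold constructions are in place, the rest of \eqref{deq_ark} and \eqref{pi_WsPL} reduces to direct Taylor bookkeeping combined with the explicit structure of \eqref{deq_brk}.
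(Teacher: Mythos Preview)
Your core construction is exactly the paper's: linearize at $\mathcal P_L$, use invariance of $\{r=0\}$ and $\{\kappa=0\}$ to identify $W^u(\mathcal P_L)=\{r=0\}$ and $W^s(\mathcal P_L)=\{\kappa=0,\ \beta=\rho_3+\phi(r,w_1,w_2,\xi)\,r\}$, then set $\hat\beta=\beta-\phi r$ (your $\beta-g+\rho_3$). The paper's proof stops there and leaves the verification of \eqref{deq_ark} implicit.

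The divergence is your extra ``$\kappa$-dependent near-identity adjustment'' of $\hat\beta$, meant to promote $\{\hat\beta=\rho_3\}$ to a $5$-dimensional invariant manifold. You are right that with the simple $\hat\beta$ one only gets $\dot{\hat\beta}|_{\hat\beta=\rho_3}=O(r\kappa)$, not identically zero, so the first line of \eqref{deq_ark} is not literally achieved. But your fix has two problems. First, any $\kappa$-dependent correction contradicts the proposition's explicit hypothesis $\hat\beta=\hat\beta(\beta,r,w_1,w_2,\xi)$; and one checks that no $\kappa$-independent $\hat\beta$ can make $\{\hat\beta=\rho_3\}$ invariant, since the obstruction term $r\kappa(\phi_{w_1}\beta r+\phi_{w_2}-\phi_\xi r^2)$ has $\phi_\xi\ne0$ at leading order. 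Second, the pseudo-stable manifold complementing the strong-unstable $\beta$-direction has smoothness capped by the spectral ratio $\lambda_\beta/\lambda_\kappa$, so the claim ``$C^k$ for any $k\ge1$'' would fail for large $k$.

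The clean resolution is that the first line of \eqref{deq_ark} is mildly overstated. What the paper's $\hat\beta$ actually yields, and what the Corner Lemma (Theorem~\ref{thm_corner}) actually requires, is
\[
  \dot{\hat\beta}=(\lambda_\beta+h_1)(\hat\beta-\rho_3)+h_1'\,\kappa,\qquad h_1'=O(r),
\]
namely the block-triangular form $\dot a=A^u a$ with $a=(\hat\beta-\rho_3,\kappa)$ and $A^u$ as in \eqref{def_Au0}. This follows directly from invariance of $\{\hat\beta=\rho_3,\kappa=0\}=W^s(\mathcal P_L)$ via Hadamard factorization, with no further adjustment. So drop the $\kappa$-correction and record the weaker form; the rest of your argument (including \eqref{pi_WsPL}) then goes through verbatim. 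Your non-compactness concern is also a non-issue: the proposition is used only near the single point $(w_{1L},w_{2L},s)\in\mathcal P_L$, so one restricts $(w_1,w_2,\xi)$ to a compact neighborhood from the outset.
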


\begin{proof}
At each point of 
$\mathcal P_L$
defined in \eqref{def_pl},
the linearized system corresponds to the matrix
represented in $(\beta,r,\kappa)$-coordinates as \beq{matrix_PL}
  \begin{pmatrix}
    \tfrac{-2}3\rho_3(\rho_3^2-3)&\tfrac{-1}2\rho_3\xi&0\\
    0&\tfrac{-1}6\rho_3^3&0\\
    0&0&\tfrac16\rho_3^3
  \end{pmatrix},
\] which has eigenvalues \beq{eigval_PL}
  \tfrac{-2}3\rho_3(\rho_3^2-3)
  >0,\quad
  \tfrac{-1}6\rho_3^3<0,\quad
  \tfrac{1}6\rho_3^3>0,
\] and eigenvectors \beq{eigvec_PL}
  (1,0,0)^\top,\quad
  \big(\tfrac{1}2\rho_3\xi,\tfrac{-2}3\rho_3(\rho_3^2-3)+\tfrac16\rho_3^3,0\big)^\top,\quad
  (0,0,1)^\top.
\]
Since the sets $\{r=0\}$ and $\{\kappa=0\}$
are invariant under \eqref{deq_brk},
it follows that \beq{WuPL_brk}
  W^u(\mathcal P_L)= \{(\beta,r,\kappa,w_1,w_2): r=0\}
\] and $W^s(\mathcal P_L)$ can be parameterized as \beq{WsPL_brk}
  W^s(\mathcal P_L)
  = \big\{(\beta,r,\kappa,w_1,w_2,\xi):
     \kappa=0
     \;\text{and}\;
    \beta= \rho_3+ \phi(r,w_1,w_2,\xi)r
  \big\}
\] where $\phi$ is a $C^k$ function satisfying \beq{phi_linear}
  \phi(r,w_1,w_2,\xi)
  = \frac{\tfrac{1}2\rho_3\xi}{\tfrac{-2}3\rho_3(\rho_3^2-3)+\tfrac16\rho_3^3}
  + O(r).
\] Set \beq{def_a}
  {\hat\beta}= \beta- \phi(r,w_1,w_2,\xi)r.
\] Then \eqref{phi_linear} implies \eqref{a_beta_r0}.
Now \eqref{WsPL_ark} follows from \eqref{WsPL_brk} and \eqref{def_a},
and \eqref{WuPL_ark} follows from \eqref{WuPL_brk}.
\end{proof}

A similar result holds for $\mathcal P_R$. We omit it here.

%%%%%%%%%%%%%%%%%%%%%%%%%%%%%%%%%%%
\subsection{Transversal Intersections}
\label{subsec_transversal}
%%%%%%%%%%%%%%%%%%%%%%%%%%%%%%%%%%%

\begin{figure}[t]
\centering
\begin{parbox}{.78\textwidth}{\centering
{\includegraphics[trim = 2cm 7.4cm 1.6cm 7.45cm, clip, width=.76\textwidth]{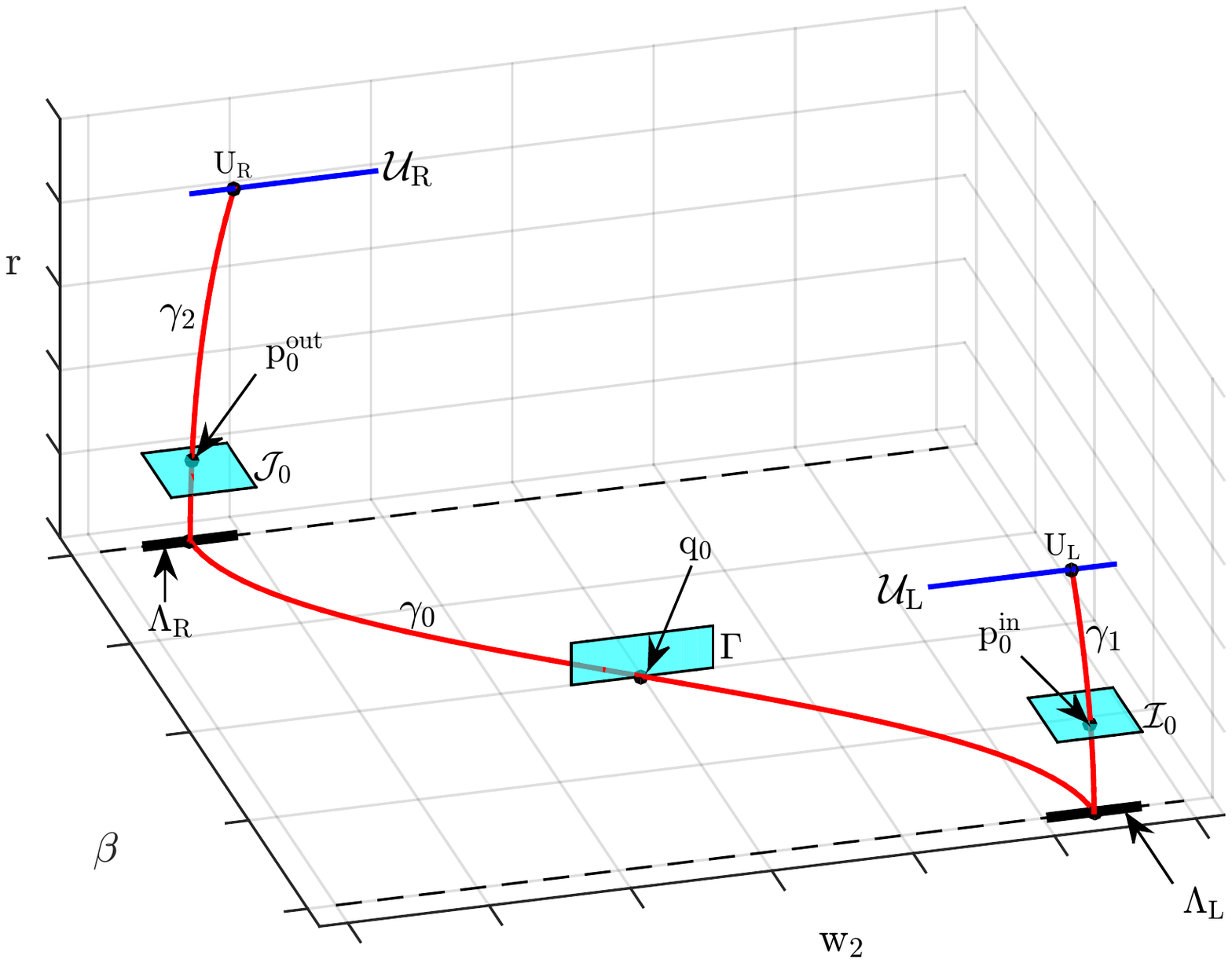}}
}\end{parbox}
\caption{
The $1$D intervals $\Lambda_L$ and $\Lambda_R$
are projections of $\mathcal I_0$ and $\mathcal J_0$, respectively,
on the critical manifolds $\mathcal P_L$ and $\mathcal P_R$.
In the $5$D space $\{r=0\}$,
the $3$D manifolds $W^u(\Lambda_L)$ and $W^s(\Lambda_R)$
intersect transversally at $q_0$,
and their intersection is the curve $\gamma_0$,
which is transversal to $\Gamma$.}
\label{fig_connection}
\end{figure}

To prove the Main Theorem,
we need to find trajectories for \eqref{deq_brk} connecting $\mathcal U_L$ and $\mathcal U_R$
in $(\beta,r,\kappa,w_1,w_2,\xi)$-space
satisfying $r\kappa=\epsilon$ for each small $\epsilon>0$.
Note that the trajectories $\gamma_1$ and $\gamma_2$
given in Proposition \ref{prop_sss}
satisfy $\gamma_1\subset W^u_0(\mathcal U_L)\cap W^s(\mathcal P_L)$
and $\gamma_2\subset W^s_0(\mathcal U_R)\cap W^u(\mathcal P_R)$.
Our strategy is to first define
$2$-dimensional manifolds
$\mathcal I_\epsilon$ and $\mathcal J_\epsilon$, $\epsilon\in [0,\epsilon_0]$,
contained in $W^u_\epsilon(\mathcal U_L)$ and $W^s_\epsilon(\mathcal U_R)$, respectively,
such that $\cup_\epsilon \mathcal I_\epsilon$ and $\cup_\epsilon \mathcal J_\epsilon$
are transverse to $\gamma_1$ and $\gamma_2$,
and then track forward/backward trajectories from $\mathcal I_\epsilon$ and $\mathcal J_\epsilon$.
An illustration with $\epsilon=0$ is shown in Fig \ref{fig_connection}.

To track trajectories evolving from $\mathcal I_\epsilon$ and $\mathcal J_\epsilon$,
we will apply the Corner Lemma
stated in Section \ref{subsec_corner}.
The key idea is to show that
the manifolds that evolve from $\mathcal I_\epsilon$ and $\mathcal J_\epsilon$,
denoted by $\mathcal I_\epsilon^*$ and $\mathcal J_\epsilon^*$, respectively,
are $C^1$ close to $W^u(\Lambda_L)$ and $W^s(\Lambda_R)$,
where $\Lambda_L\subset \mathcal P_L$ and $\Lambda_R\subset \mathcal P_R$
are projections of $\mathcal I_0$ and $\mathcal J_0$.
Hence transversal intersection of $W^u(\Lambda_L)$ and $W^s(\Lambda_R)$
will imply that of $\mathcal I_\epsilon^*$ and $\mathcal J_\epsilon^*$.

Fix a small $r^0>0$ so that $\gamma_1$ intersects $\{r=r^0\}$ at a unique point.
Denote this point by $p^\din_0$. That is, \beq{def_pin0}
  p^\din_0= \gamma_1\cap \{r=r^0\}.
\] We set \beq{def_Ieps}
  \mathcal I_\epsilon
  = W^u_\epsilon(\mathcal U_L)\cap \{r=r^0\}\cap V_1,
\] where $V_1$ is an open neighborhood of $p^\din_0$ to be specified below:
From the expression \eqref{def_ul}, $\mathcal U_L$ is $1$-dimensional,
so from $\mathrm{(H1)}$ we see that $W^u_\epsilon(\mathcal U_L)$ is $3$-dimensional.
Hence we can choose $V_1$ so that
 $\mathcal I_\epsilon$ is parametrized,
in $({\hat\beta},r,\kappa,w_1,w_2,\xi)$-coordinates given in Proposition \ref{prop_pl},
by \beq{Ieps_para}
  \mathcal I_\epsilon
  &= \big\{
  ({\hat\beta},r,\kappa,w_1,w_2,\xi): r=r^0,\kappa=\epsilon/r^0,\\
  &\qquad\qquad
  (w_1,w_2,\xi)= (w_{1L},w_{2L},s)+ \alpha_1(-u_{1L},-u_{2L},1)
  + \epsilon\theta({\hat\beta},\alpha_1,\epsilon),\\
  &\qquad\qquad
  |{\hat\beta}-\rho_3|<\Delta_1,|\alpha_1|<\Delta_1
  \big\}
\] for some $\Delta_1>0$ and some $C^4$ function $\theta$.
(The order of differentiability of $\theta$ is chosen so that the Corner Lemma applies.)
Note that $\mathcal I_0$ is a affine surface,
and $\mathcal I_\epsilon$ can be viewed as a perturbation of $\mathcal I_0$.

Let \beq{def_I}
  \mathcal I= \bigcup_{\epsilon\in [0,\epsilon_0]} \mathcal I_\epsilon.
\] Since $p^\din_0\in \gamma_1\subset W^u(\mathcal U_L)\cap W^s(\mathcal P_L)$,
frrm \eqref{WsPL_brk} and \eqref{Ieps_para}
we see that $\mathcal I$ and $W^s(\mathcal P_L)$ intersect transversally at $p^\din_0$,
and the projection into $\mathcal P_L$ of their intersection
along stable fibers is, by \eqref{pi_WsPL}, \beq{LambdaL_para}
  \Lambda_L
  &= \{(\beta,r,\kappa,w_1,w_2,\xi):\;
  \beta= \rho_3,r=0,\kappa=0,\\
  &\qquad\qquad
  (w_1,w_2,\xi)
  = (w_{1L},w_{2L},s)+\alpha_1(-u_{1L},-u_{2L},1),\\
  &\qquad\qquad
  |\alpha_1|<\Delta_1\}.
\] Also we let $p^\dout_0$, $\mathcal J_\epsilon$,
$\mathcal J$ and $\Lambda_R$ be analogously defined.

Since $\Lambda_L$ is a subset of the
normally hyperbolic critical manifold $\mathcal P_L$ for \eqref{deq_ark},
the unstable manifold $W^u(\Lambda_L)$ can be defined in the natural way.
From \eqref{WuPL_ark} we see that $W^u(\Lambda_L)\subset\{r=0\}$.
Similarly, $\Lambda_R$ and $W^s(\Lambda_R)$ are defined,
and $W^s(\Lambda_R)\subset\{r=0\}$.
Note that the trajectory $\gamma_0$ given in Proposition \ref{prop_gamma0} satisfies \[
  \gamma_0\subset W^u(\Lambda_L)\cap W^s(\Lambda_R).
\]
To track the intersection of $W^u(\Lambda_L)$ and $W^s(\Lambda_R)$ along $\gamma_0$,
we fix a hyperplane \beq{def_Gamma}
  \Gamma= \{(\beta,r,\kappa,w_1,w_2,\xi): \beta= 0\}
\] and set \beq{def_q0}
  q_0
  =\gamma_0\cap \Gamma.
\] 

\begin{prop}\label{prop_transversal_lambda}
$W^u(\Lambda_L)$ and $W^s(\Lambda_R)$
intersect transversally at $q_0$ in the space $\{r=0\}$,
and their intersection near $q_0$
is a portion of the curve $\gamma_0$ given in Proposition \ref{prop_gamma0},
and hence is transverse to $\Gamma$ at $\gamma_0$.
\end{prop}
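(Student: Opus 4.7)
The plan is to exploit the explicit solutions furnished by Proposition \ref{prop_wbk} to write down bases for $T_{q_0}W^u(\Lambda_L)$ and $T_{q_0}W^s(\Lambda_R)$ inside the invariant slice $\{r=0\}$, and then verify transversality by a direct rank computation. Since $\dim\{r=0\}=5$ and both $W^u(\Lambda_L)$ and $W^s(\Lambda_R)$ are $3$-dimensional, transversality at $q_0$ is equivalent to showing that the six tangent vectors (three from each side) span the full $5$-dimensional tangent space $T_{q_0}\{r=0\}$; a dimension count then forces the intersection of tangent spaces to be $1$-dimensional, which by uniqueness must locally coincide with $\gamma_0$.

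Concretely, I would parametrize $W^u(\Lambda_L)$ near $q_0$ by combining \eqref{def_kappa_minus} with \eqref{LambdaL_para}, i.e.\ by the map $(\sigma,\bar\kappa,\alpha_1)\mapsto\bigl(\iota_1(\sigma),\,\bar\kappa\,\iota_2(\sigma),\,w_{1L}-\alpha_1 u_{1L},\,w_{2L}-\alpha_1 u_{2L}\pm\bar\kappa\,\iota_3(\sigma),\,s+\alpha_1\bigr)$, and similarly parametrize $W^s(\Lambda_R)$ using the ``$+$'' branch of Proposition \ref{prop_wbk} together with the analogous description of $\Lambda_R$. Differentiating at $(\sigma,\bar\kappa,\alpha_1)=(0,\kappa_0,0)$, using $\iota_1(0)=0$, $\iota_1'(0)=-1$, $\iota_2(0)=1$ and $\iota_2'(0)=0$, gives explicit $5$-vectors: $\partial_\sigma$ equals the velocity $\dot\gamma_0(q_0)=(-1,0,0,-\kappa_0,0)$ of the system \eqref{deq_brk0} at $q_0$, $\partial_{\bar\kappa}$ lies in the $(\kappa,w_2)$-plane with nonzero $w_2$-component proportional to $\iota_3(0)$, and $\partial_{\alpha_1}$ contributes $(-u_{1L},-u_{2L},1)$ in the $(w_1,w_2,\xi)$-slot. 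An analogous triple is produced for $T_{q_0}W^s(\Lambda_R)$, and crucially the $\partial_\sigma$-vectors from the two sides coincide because both equal $\dot\gamma_0(q_0)$.

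The rank check then reduces to a small explicit linear-algebra problem. After dropping the duplicated $\partial_\sigma$-column I need to verify that the five vectors $\partial_\sigma,\partial_{\bar\kappa},\partial_{\hat\kappa},\partial_{\alpha_1},\partial_{\alpha_2}$ are linearly independent. The pair $\partial_{\bar\kappa}$ and $\partial_{\hat\kappa}$ has the same $\kappa$-coefficient but differs in the $w_2$-coefficient by $2\iota_3(0)\ne 0$, which is exactly the nondegeneracy that allowed us to solve for $\kappa_0$ in \eqref{def_kappa0}; together with $\partial_\sigma$ these three vectors already span the $(\beta,\kappa,w_2)$-subspace. Projecting $\partial_{\alpha_1}$ and $\partial_{\alpha_2}$ onto the complementary $(w_1,\xi)$-plane yields $(-u_{1L},1)$ and $(-u_{1R},1)$, whose linear independence is equivalent to $u_{1L}\ne u_{1R}$; this is guaranteed by $\mathrm{(H1)}$ via $\lambda_+(u_R)=u_{1R}+1<u_{1L}+1=\lambda_+(u_L)$. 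So the full $5\times 5$ matrix is invertible, transversality holds in $\{r=0\}$, and the intersection locally equals $\gamma_0$.

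Transversality of $\gamma_0$ to $\Gamma=\{\beta=0\}$ is then immediate from the same computation, since the first equation of \eqref{deq_brk0} gives $\dot\beta(q_0)=-\tfrac16\cdot 6=-1\ne 0$, so $\dot\gamma_0(q_0)\notin T_{q_0}\Gamma$. The one genuine subtlety I anticipate is keeping the sign conventions straight in the $w_2$-derivatives of the ``$-$'' and ``$+$'' branches of Proposition \ref{prop_wbk}, since a spurious sign coincidence there would collapse $\partial_{\bar\kappa}$ and $\partial_{\hat\kappa}$ onto the same vector and break the rank argument; once those signs are pinned down, the entire proposition reduces to the nonvanishing of $\iota_3(0)$ and of the determinant $u_{1R}-u_{1L}$, both of which have already been justified.
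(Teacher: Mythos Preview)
Your proposal is correct and follows essentially the same approach as the paper: both compute explicit spanning sets for $T_{q_0}W^u(\Lambda_L)$ and $T_{q_0}W^s(\Lambda_R)$ from the parametrizations in Proposition~\ref{prop_wbk} and the descriptions of $\Lambda_L,\Lambda_R$, and then reduce transversality to the two nondegeneracies $\iota_3(0)\ne 0$ and $u_{1L}\ne u_{1R}$. Your block decomposition of the rank check (first span $(\beta,\kappa,w_2)$, then project $\partial_{\alpha_1},\partial_{\alpha_2}$ to $(w_1,\xi)$) and the separate verification that $\dot\beta(q_0)=-1\ne 0$ are exactly the computations the paper summarizes in one sentence.
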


\begin{proof}
From Proposition \ref{prop_wbk} and \ref{prop_gamma0},
we have \beq{tangent_lambdaL}
  T_{q_0}W^u(\Lambda_L)
  = \mathrm{Span} \left\{
  \begin{pmatrix}1\\0\\ *\\0\\ *\\0\end{pmatrix},
  \begin{pmatrix}0\\0\\1\\0\\2{\iota_3}(0)\\0\end{pmatrix},
  \begin{pmatrix}0\\0\\0\\-u_{1L}\\-u_{2L}\\1\end{pmatrix}
  \right\}
\] and \beq{tangent_lambdaR}
  T_{q_0}W^s(\Lambda_R)
  = \mathrm{Span} \left\{
  \begin{pmatrix}1\\0\\ *\\0\\ *\\0\end{pmatrix},
  \begin{pmatrix}0\\0\\1\\0\\-2{\iota_3}(0)\\0\end{pmatrix},
  \begin{pmatrix}0\\0\\0\\-u_{1R}\\-u_{2R}\\1\end{pmatrix}
  \right\}
\]
in $(\beta,r,\kappa,w_1,w_2,\xi)$ coordinates,
where ${\iota_3}(\sigma)$ is the positive function defined in \eqref{def_q2q3}.
Since ${\iota_3}(0)\ne 0$ and $u_{1L}\ne u_{1R}$,
from \eqref{tangent_lambdaL} and \eqref{tangent_lambdaR}
we see that $T_{q_0}W^u(\Lambda_L)$
and $T_{q_0}W^s(\Lambda_R)$
span $(\beta,\kappa,w_1,w_2,\xi)$-space
and they have a $1$-dimensional intersection
which is transversal to $\Gamma$.
Since $q_0\in \gamma_0\subset W^u(\Lambda_L)\cap W^s(\Lambda_R)$,
the desired result follows.
\end{proof}

Now we have obtained the
singular configuration $\gamma_1\cup \gamma_0\cup \gamma_2$,
which joins the end states $u_L$ and $u_R$.
In the next section we will show that there are solutions of \eqref{sf_u}
which are close to the singular configuration.

%%%%%%%%%%%%%%%%%%%%%%%%%%%%%%%%%%%
\section{Completing the Proof of the Main Theorem}
\label{sec_pf_main}
%%%%%%%%%%%%%%%%%%%%%%%%%%%%%%%%%%%
We split the proof of the main theorem into two parts.
In Section \ref{sub{KK_sec_complete}istence} we prove
the existence of solutions of
the boundary value problem \eqref{dafermos_similar} and \eqref{bc_u_infty}.
In Section \ref{subsec_convergence}
we derive the weak convergence \eqref{limit_u12}.

%%%%%%%%%%%%%%%%%%%%%%%%%%%%%%%%%%%
\subsection{Existence of Viscous Profile}
\label{sub{KK_sec_complete}istence}
%%%%%%%%%%%%%%%%%%%%%%%%%%%%%%%%%%%

\begin{prop}\label{prop_qeps}
Let $p^\din_0$, $p^\dout_0$, $q_0$,
$\mathcal I_\epsilon$, $\mathcal J_\epsilon$ and $\Gamma$
be defined in Section \ref{subsec_transversal}.
For each small $\epsilon>0$,
there exist $p^\din_\epsilon\in \mathcal I_\epsilon$,
$p^\dout_\epsilon\in \mathcal J_\epsilon$,
$q_\epsilon\in \Gamma$ and $T_{1\epsilon},T_{2\epsilon}>0$
such that \beq{link_qeps}
  q_\epsilon= p^\din_\epsilon\cdot T_{1\epsilon},\quad
  q_\epsilon= p^\dout_\epsilon\cdot (-T_{2\epsilon}),
\] where $\cdot$ denotes the flow for \eqref{deq_brk},
satisfying \beq{limit_qeps}
  (p^\din_\epsilon, p^\dout_\epsilon, q_\epsilon)
  = (p^\din_0, p^\dout_0, q_0)+ o(1)
\] as $\epsilon\to 0$, and \beq{est_T12eps}
  C^{-1}\log\frac1{\epsilon}\le T_{i\epsilon}\le C\log\frac1{\epsilon},\quad i=1,2,
\] for some $C>0$.
Moreover, if we set $\beta_\epsilon(\sigma)$ and $\kappa_\epsilon(\sigma)$
to be the $\beta$- and $\kappa$-coordinates of $q_\epsilon\cdot \sigma$,
$\sigma\in [-T_{1\epsilon},T_{2\epsilon}]$,
then \beq{est_kappa_eps}
  \max_{\sigma\in [-T_{1\epsilon},T_{2\epsilon}]}\kappa_\epsilon(\sigma)
  = \kappa_0+ o(1)
\] and \beq{bkeps_minmax}
  \max_{\sigma\in [-T_{1\epsilon},T_{2\epsilon}]}
  \pm\beta_\epsilon(\sigma)\kappa_\epsilon(\sigma)
  = \omega_0+ o(1),
\] as $\epsilon\to 0$, where $\kappa_0$ is defined in \eqref{def_kappa0},
and \beq{def_omega0}
  \omega_0
  = \kappa_0\, {\iota_2}(\sigma_0),
\] where $\sigma_0$ is the unique number such that ${\iota_1}(\sigma_0)=1$,
and ${\iota_1}(\sigma)$, ${\iota_2}(\sigma)$ are positive functions
defined in \eqref{def_q1} and \eqref{def_q2q3}.
\end{prop}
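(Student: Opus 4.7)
The plan is to locate $q_\epsilon$ as the unique intersection of $\Gamma$ with $\mathcal I_\epsilon^* \cap \mathcal J_\epsilon^*$ by invoking the Corner Lemma (Theorem \ref{thm_corner}) twice: once at $\mathcal P_L$ for the forward evolution of $\mathcal I_\epsilon$, and once in reversed time at $\mathcal P_R$ for the backward evolution of $\mathcal J_\epsilon$. For the application at $\mathcal P_L$, I use the coordinates supplied by Proposition \ref{prop_pl}: set $a=({\hat\beta}-\rho_3,\kappa)$, $b=r$, $c=(w_1,w_2,\xi)$. The set $\{\kappa=0\}$ is invariant under \eqref{deq_brk} because $\kappa$ factors out of $\dot\kappa$, so $A^u$ has the block-triangular form \eqref{def_Au0} with $a_k=\kappa$. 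The parametrization \eqref{Ieps_para} of $\mathcal I$ then fits the form \eqref{def_Icorner} after rescaling $a_k$ by $r^0$ so that $\mathcal I_\epsilon=\mathcal I\cap\{a_k=\epsilon\}$. Theorem \ref{thm_corner} yields that $\mathcal I_\epsilon^*$ is $C^{r-3}$ close to $W^u(\Lambda_L)$ in a neighborhood of any point of $W^u(\Lambda_L)$ with positive $\kappa$-coordinate. The symmetric setup at $\mathcal P_R$ (stable and unstable directions interchanged, time reversed, again with $a_k=\kappa$) gives that $\mathcal J_\epsilon^*$ is $C^{r-3}$ close to $W^s(\Lambda_R)$.

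A direct computation using \eqref{deq_brk} shows that $r\kappa$ is conserved by the flow, so both $\mathcal I_\epsilon^*$ and $\mathcal J_\epsilon^*$ lie in the $5$-dimensional invariant hypersurface $\{r\kappa=\epsilon\}$, where they are $C^1$-small perturbations of $W^u(\Lambda_L),W^s(\Lambda_R)\subset\{r=0\}$. By Proposition \ref{prop_transversal_lambda} the latter intersect transversally within $\{r=0\}$ along $\gamma_0$, and $\gamma_0$ is transverse to $\Gamma$ at $q_0$. Hence for small $\epsilon>0$, $\mathcal I_\epsilon^*\cap\mathcal J_\epsilon^*$ is a $C^1$-perturbation of $\gamma_0$ and meets $\Gamma$ at a unique point near $q_0$; I take $q_\epsilon$ to be that point. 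The Corner Lemma then produces $p^\din_\epsilon\in\mathcal I_\epsilon$ and $T_{1\epsilon}>0$ with $q_\epsilon=p^\din_\epsilon\cdot T_{1\epsilon}$, the convergence $p^\din_\epsilon\to p^\din_0$ (identifying $p^\din_0$ via $\pi^s(p^\din_0)=\pi^u(q_0)$, which lands on the $\alpha_1=0$ fiber of $\Lambda_L$ traced out by $\gamma_1$, i.e., the starting point of $\gamma_1$ on $\{r=r^0\}$), and the time estimate \eqref{est_T12eps}. The analogous construction at $\mathcal P_R$ furnishes $p^\dout_\epsilon$ and $T_{2\epsilon}$.

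The asymptotic identities \eqref{est_kappa_eps} and \eqref{bkeps_minmax} follow by comparison with $\gamma_0$. Along $\gamma_0$, $\kappa(\sigma)=\kappa_0\iota_2(\sigma)$ and $\dot\iota_2=\tfrac{\iota_1^3}{6}\iota_2$; since $\iota_1$ vanishes only at $\sigma=0$, the positive function $\iota_2$ attains its global maximum $1$ there---which is exactly where $\gamma_0$ crosses $\Gamma$---while $\kappa$ vanishes identically along $\gamma_1\cup\gamma_2$. Locally uniform convergence of the $\epsilon$-trajectory to $\gamma_1\cup\gamma_0\cup\gamma_2$ on every compact $\sigma$-segment then gives \eqref{est_kappa_eps}. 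A direct differentiation using \eqref{deq_brk0_beta}--\eqref{brk0_kappa} yields $\tfrac{d}{d\sigma}(\iota_1\iota_2)=\iota_2(\iota_1^2-1)$, so the extrema of $\beta\kappa=\kappa_0\iota_1\iota_2$ along $\gamma_0$ occur precisely where $\iota_1=\pm1$, producing values $\pm\omega_0$ as in \eqref{def_omega0}; the same convergence extends the estimate to the $\epsilon$-trajectory. The main technical obstacle I anticipate lies in verifying the Corner Lemma's structural hypothesis \eqref{def_Au0} in the time-reversed setup at $\mathcal P_R$ and in uniformly controlling the trajectory through the transition regions near $\mathcal P_L$ and $\mathcal P_R$, so as to rule out the possibility that the suprema of $\kappa_\epsilon$ or $\pm\beta_\epsilon\kappa_\epsilon$ are realized outside the $\gamma_0$-portion of the orbit.
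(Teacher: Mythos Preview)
Your proposal is correct and follows essentially the same route as the paper's proof: apply the Corner Lemma at $\mathcal P_L$ and (in reversed time) at $\mathcal P_R$, use the conserved quantity $r\kappa=\epsilon$ together with Proposition \ref{prop_transversal_lambda} to locate $q_\epsilon$ as the transversal intersection near $q_0$, and identify the extrema of $\kappa$ and $\beta\kappa$ along $\gamma_0$ by differentiating $\iota_1\iota_2$. Your computation $\tfrac{d}{d\sigma}(\iota_1\iota_2)=\iota_2(\iota_1^2-1)$ is in fact cleaner than the paper's, which carries a minor algebraic slip but reaches the same conclusion that the critical point occurs at $\iota_1=1$.
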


\begin{proof}
Let $\mathcal I= \bigcup_\epsilon\mathcal I_\epsilon$.
Since $\mathcal I_\epsilon\subset\{r=r^0\}$,
from the relation $\kappa=\epsilon/r$ we have \[
  \mathcal I_\epsilon
  = \mathcal I\cap \{\kappa=\epsilon/r^0\}.
\] 
From the construction of $p^\din_0$, $p^\dout_0$ and $q_0$, we have \[
  &\pi^u_{\mathcal P_L}(q_0)
  = \pi^s_{\mathcal P_L}(p^\din_0)= (0,0,0,w_{1L},w_{2L},s)\in \Lambda_L\\
  &\pi^s_{\mathcal P_R}(q_0)
  = \pi^u_{\mathcal P_R}(p^\dout_0)= (0,0,0,w_{1R},w_{2R},s)\in \Lambda_R\\
\] in $(\beta,r,\kappa,w_1,w_2,\xi)$-coordinates,
where $\pi^{s,u}_{\mathcal P_{L,R}}$ is the projection onto $\mathcal P_{L,R}$
along stable/unstable fibers.
For the system \eqref{deq_ark}, the conditions for the Corner Lemma are satisfied.
Hence there exists a neighborhood $V_0$ of $q_0$
such that \beq{IV0_C1}
  \text{
    $\mathcal{I}_\epsilon^*\cap V_0$
    is $C^1$ close to $T_{q_0}W^u(\Lambda_L)\cap V_0$,
  }
\] where $\mathcal I_\epsilon^*= \mathcal I_\epsilon\cdot [0,\infty)$.
Similarly, setting $\mathcal J_\epsilon^*= \mathcal J_\epsilon\cdot (-\infty,0]$,
we have \beq{JV0_C1}
   \text{
    $\mathcal{J}_\epsilon^*\cap V_0$
    is $C^1$ close to $T_{q_0}W^s(\Lambda_R)\cap V_0$.
  }
\]
From \eqref{IV0_C1}, \eqref{JV0_C1} and Proposition \ref{prop_transversal_lambda},
it follows that the projections of $\mathcal{I}_\epsilon^*$ and $\mathcal{J}_\epsilon^*$
in the $5$-dimensional space $\{r=0\}$
intersect transversally at a unique point in $\Gamma$ near $q_0$.
From the relation $r=\epsilon/\kappa$,
we then recover a unique intersection point \beq{def_qeps}
  q_\epsilon
  \in \mathcal{I}_\epsilon^*\cap \mathcal{J}_\epsilon^*\cap \Gamma
\] in $(\beta,r,\kappa,w_1,w_2,\xi)$-space.
By the construction we have 
\eqref{link_qeps} and \eqref{limit_qeps}.
The estimate \eqref{est_T12eps} 
follows from \eqref{est_Teps_corner}.

The unstable fiber containing $q_0$ 
in $W^u(\mathcal P_L)$ is the trajectory
$\gamma_0$ defined in Proposition \ref{prop_gamma0}.
The $\beta$- and $\kappa$-coordinates on $\gamma_0$
are ${\iota_1}(\sigma)$ and $\kappa_0{\iota_2}(\sigma)$, respectively.
From \eqref{def_q2q3} we know ${\iota_2}(\sigma)\le {\iota_2}(0)=1$.
Hence \eqref{est_kappa_eps} follows.
To prove \eqref{bkeps_minmax},
by symmetry of $\gamma_0$, it suffices to show that \beq{bk_minmax}
  \max_{\sigma\in (-\infty,0]} {\iota_1}(\sigma){\iota_2}(\sigma)
  = {\iota_2}(\sigma_0),
\] where $\sigma_0$ is defined by ${\iota_1}(\sigma_0)=1$.
Note that the values of ${\iota_1}(\sigma)$ and ${\iota_2}(\sigma)$ are positive on $(-\infty,0)$, and \[
  {\iota_1}(0){\iota_2}(0)= 0\cdot 1= 0,\quad
  {\iota_1}(-\infty){\iota_2}(-\infty)= \rho_3\cdot 0= 0.
\] By taking the derivative of ${\iota_1}(\sigma){\iota_2}(\sigma)$ it can be readily seen
that the maximum of this function
occurs at a unique number $\sigma_0$ satisfying ${\iota_1}(\sigma_0)=1$.
Indeed, from the definition \eqref{def_q1} and \eqref{def_q2q3}, we have \[
  \frac{d}{d\sigma}\big( {\iota_1}(\sigma){\iota_2}(\sigma)\big)
  &={\iota_2}(\sigma)\left[
    \dot{q}_1(\sigma)+ \tfrac16{\iota_1}(\sigma)^3
  \right]\\
  &=\tfrac16{\iota_2}(\sigma)\left[
    -(\beta^4-6\beta^2+6)+\beta^3
  \right],
\] where we write $\beta={\iota_1}(\sigma)$.
Since $0<{\iota_1}(\sigma)<\rho_3$ for $\sigma\in (-\infty,0)$,
this derivative has a unique zero,
which occurs when $\beta=1$.
This proves \eqref{bk_minmax} and hence \eqref{bkeps_minmax}.
\end{proof}

\begin{prop}\label{prop_existence}
Let $q_\epsilon= (\beta_\epsilon^0,r_\epsilon^0,\kappa_\epsilon^0,
w_{1\epsilon}^0,w_{2\epsilon}^0,\xi_\epsilon^0)\in \Gamma$
be defined in Proposition \ref{prop_qeps}.
Let $(u_{1\epsilon}^0,u_{2\epsilon}^0)= (\beta_\epsilon^0/r_\epsilon^0,1/(r_\epsilon^0)^2)$
and \beq{def_utilde}
  (\tilde{u}_{1\epsilon},\tilde{u}_{2\epsilon},
  \tilde{w}_{1\epsilon},\tilde{w}_{2\epsilon})(\xi)
  = (u_{1\epsilon}^0,u_{2\epsilon}^0,w_{1\epsilon}^0,w_{2\epsilon}^0)
  \myflow{dafermos_similar_xi}(\xi-\xi_\epsilon^0),
\] or equivalently, \beq{def_utilde2}
  (\tilde{u}_{1\epsilon},\tilde{u}_{2\epsilon},
  \tilde{w}_{1\epsilon},\tilde{w}_{2\epsilon},\xi)
  = (u_{1\epsilon}^0,u_{2\epsilon}^0,w_{1\epsilon}^0,w_{2\epsilon}^0,\xi_\epsilon^0)
  \myflow{sf_u12}\left(\tfrac{\xi-\xi_\epsilon^0}{\epsilon}\right).
\] Then $(\tilde{u}_{1\epsilon},\tilde{u}_{2\epsilon})$
is a solution of \eqref{dafermos_similar} and \eqref{bc_u_infty},
and it satisfies \eqref{est_umax}.
\end{prop}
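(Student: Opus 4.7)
The plan is to extend the middle trajectory $q_\epsilon\cdot\sigma$ of Proposition~\ref{prop_qeps} backward and forward under \eqref{sf_u12} to a full orbit defined for all $\xi\in\mathbb{R}$, and then to translate the $(\beta,\kappa)$ estimates of Proposition~\ref{prop_qeps} into $L^\infty$ bounds on $(u_1,u_2)$.

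First I would extend the orbit on both sides. Since $p^\din_\epsilon\in\mathcal{I}_\epsilon\subset W^u_\epsilon(\mathcal{U}_L)$ and $p^\dout_\epsilon\in\mathcal{J}_\epsilon\subset W^s_\epsilon(\mathcal{U}_R)$, the backward orbit from $p^\din_\epsilon$ stays on $W^u_\epsilon(\mathcal{U}_L)$ and, by Proposition~\ref{prop_ul}, approaches $\mathcal{U}_L$ exponentially; symmetrically on the right. A direct computation shows that $\mathcal{U}_L$ is itself invariant under \eqref{sf_u12} for every $\epsilon$ (on $\mathcal{U}_L$ one has $\dot u=f(u_L)-(s+\alpha_1)u_L-(w_L-\alpha_1 u_L)=0$), and on it the system reduces to $\dot u=0$, $\dot w=-\epsilon u_L$, $\dot\xi=\epsilon$; the same holds for $\mathcal{U}_R$. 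Since $\dot\xi=\epsilon>0$ throughout \eqref{sf_u12}, the backward and forward orbits are defined for all time and satisfy $\xi\to\mp\infty$ with $u\to u_{L,R}$. Because \eqref{sf_u12} and \eqref{dafermos_similar_xi} define the same orbit in $(u,w,\xi)$-space up to a time rescaling, and \eqref{dafermos_similar_xi} is the first-order form of \eqref{dafermos_similar}, the reconstructed $(\tilde u_{1\epsilon},\tilde u_{2\epsilon})(\xi)$ solves \eqref{dafermos_similar}--\eqref{bc_u_infty}.

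Next I would establish the maximum estimates by splitting the orbit into the backward leg reaching $p^\din_\epsilon$, the middle leg from $p^\din_\epsilon$ through $q_\epsilon$ to $p^\dout_\epsilon$, and the forward leg leaving $p^\dout_\epsilon$. On the outer legs the orbit is $O(\epsilon)$-close, by Fenichel's theorems, to compact portions of $\gamma_1\cup\mathcal{U}_L$ or $\gamma_2\cup\mathcal{U}_R$, on which $r=1/\sqrt{u_2}\ge c>0$; hence $u_1=\beta/r$ and $u_2=1/r^2$ remain $O(1)$ uniformly in $\epsilon$ there. On the middle leg the invariant $r\kappa=\epsilon$ rewrites these as $u_2=\kappa^2/\epsilon^2$ and $u_1=\beta\kappa/\epsilon$, so \eqref{est_kappa_eps} and \eqref{bkeps_minmax} give
\[
  \max_\xi\tilde u_{2\epsilon}=(\kappa_0^2+o(1))\epsilon^{-2},\quad
  \max_\xi(\pm\tilde u_{1\epsilon})=(\omega_0+o(1))\epsilon^{-1},
\]
which dominate the $O(1)$ outer bounds and are therefore global maxima, establishing \eqref{est_umax}.

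The main technical obstacle will be the uniformity-in-$\epsilon$ control of the outer legs. Proposition~\ref{prop_ul} provides exponential attraction to $\mathcal{U}_L$ only for continuous compact families inside $W^u_\epsilon(\mathcal{U}_L)$, whereas the leg from $p^\din_\epsilon$ back to the asymptotic slow motion on $\mathcal{U}_L$ consists of a transient portion that stays $O(\epsilon)$-close to the compact curve $\gamma_1$ followed by a long tail near $\mathcal{U}_L$. Combining these two estimates (transient bounded via Fenichel applied to compact subarcs of $\gamma_1$, tail exponentially attracted via Proposition~\ref{prop_ul}) yields uniform boundedness of $r$ away from zero on the whole leg; the forward leg is handled symmetrically.
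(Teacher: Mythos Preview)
Your proposal is correct and follows essentially the same approach as the paper: use the inclusions $p^\din_\epsilon\in\mathcal I_\epsilon\subset W^u_\epsilon(\mathcal U_L)$ and $p^\dout_\epsilon\in\mathcal J_\epsilon\subset W^s_\epsilon(\mathcal U_R)$ together with Proposition~\ref{prop_ul} and its analogue for $\mathcal U_R$ to obtain \eqref{bc_u_infty}, and then convert the $(\beta,\kappa)$ estimates \eqref{est_kappa_eps}--\eqref{bkeps_minmax} into $(u_1,u_2)$ estimates via the identity $r\kappa=\epsilon$. Your explicit three-leg decomposition and the argument that the outer legs contribute only $O(1)$ to the maxima is more detailed than the paper's proof, which simply invokes \eqref{est_kappa_eps} and \eqref{bkeps_minmax} without separately bounding the outer portions.
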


\begin{proof}
Since \eqref{dafermos_similar} and \eqref{dafermos_similar_xi} are equivalent
and $(\tilde{u}_{1\epsilon},\tilde{u}_{2\epsilon},
\tilde{w}_{1\epsilon},\tilde{w}_{2\epsilon})(\xi)$
is a solution of \eqref{dafermos_similar_xi},
we know $(\tilde{u}_{1\epsilon},\tilde{u}_{2\epsilon})$ is a solution of \eqref{dafermos_similar}.

Let $T_{1\epsilon}$ and $T_{2\epsilon}$ be as defined in Proposition \ref{prop_qeps}.
Then \[
  q_\epsilon\cdot (-T_{1\epsilon})\in \mathcal I_\epsilon,\quad
  q_\epsilon\cdot (T_{2\epsilon})\in \mathcal I_\epsilon,
\] where $\cdot$ denotes the flow for \eqref{deq_brk}.
Since $\mathcal I_\epsilon\subset W^u_\epsilon(\mathcal U_L)$
and $\mathcal J_\epsilon\subset W^s_\epsilon(\mathcal U_R)$,
from \eqref{est_dist_ul} and \eqref{est_dist_ur} it follows that \[
  \lim_{t\to -\infty}\mathrm{dist}(p_\epsilon^0\cdot t,\mathcal U_L)=0,
  \quad
  \lim_{t\to \infty}\mathrm{dist}(p_\epsilon^0\cdot t,\mathcal U_R)=0,
\] which implies \eqref{bc_u_infty}.
Since $\tilde{u}_{2\epsilon}=(1/\tilde{r}_\epsilon)^2=(\tilde{\kappa}_\epsilon/\epsilon)^2$
and $\tilde{u}_{1\epsilon}=\tilde\beta_\epsilon/\tilde{r}_\epsilon
=\tilde\beta_\epsilon\tilde{\kappa}_\epsilon/\epsilon$,
from \eqref{est_kappa_eps} and \eqref{bkeps_minmax}
we obtain \eqref{est_umax}.
\end{proof}

Here we justify the assertion made at the end of Section \ref{sec_cpt}.
From the equation for $\dot\xi$ in \eqref{deq_brk}, we have
$\dot{\tilde{\xi}}_{\epsilon}=\tilde{\kappa}_{\epsilon}\tilde{r}_{\epsilon}^2
= \epsilon^2/\tilde{\kappa}_{\epsilon}$.
Since the integral of $1/\kappa$ along any compact segment of $\gamma_0$ is finite,
the change in $\xi$ near such a segment is of order $O(\epsilon^2)$.

%%%%%%%%%%%%%%%%%%%%%%%%%%%%%%%%%%%
\subsection{Convergence of Viscous Profile}
\label{subsec_convergence}
%%%%%%%%%%%%%%%%%%%%%%%%%%%%%%%%%%%

\begin{prop}\label{prop_est_utilde}
Let $\tilde{u}_\epsilon= (\tilde{u}_{1\epsilon},\tilde{u}_{2\epsilon})$
be the solution of \eqref{dafermos_similar} and \eqref{bc_u_infty}
given in Proposition \ref{prop_existence}.
Let $p^\din_\epsilon$ and $p^\dout_\epsilon$
be defined in Proposition \ref{prop_qeps},
and $s$ defined in \eqref{def_s}.
Then \begin{align}
  &|\xi_\epsilon^\din-s|+ |\xi_\epsilon^\dout-s|= o(1)\label{est_xi_eps}\\
  &\int_{-\infty}^{\xi_\epsilon^\din}|\tilde{u}_\epsilon(\xi)-u_L|\;d\xi
  + \int_{\xi_\epsilon^\dout}^{\infty} |\tilde{u}(\xi)-u_R|\;d\xi
  = o(1)\label{int_u_outer}\\
  &\int_{\xi_\epsilon^\din}^{\xi_\epsilon^\dout} \tilde{u}_\epsilon(\xi)\;d\xi
  = (0,e_0)+ o(1)\label{int_u_inner}
\end{align} as $\epsilon\to 0$,
where $\xi^{\mathrm{in,out}}_\epsilon$ and 
are the $\xi$-coordinates of $p^{\mathrm{in,out}}_\epsilon$.
\end{prop}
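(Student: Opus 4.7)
The plan is to partition the real line into the three intervals $(-\infty,\xi^{\din}_\epsilon)$, $(\xi^{\din}_\epsilon,\xi^{\dout}_\epsilon)$, and $(\xi^{\dout}_\epsilon,+\infty)$, and to estimate $\tilde u_\epsilon$ on each region separately. On the two outer intervals I will use Fenichel-type exponential attraction to $\mathcal U_L$ and $\mathcal U_R$; on the inner interval I will exploit the slow equation $w_\xi=-u$ of \eqref{dafermos_similar_xi} to reduce the integral of $\tilde u_\epsilon$ to a difference of boundary values of $\tilde w_\epsilon$. The asymptotics of $p^{\din}_\epsilon$ and $p^{\dout}_\epsilon$ from Proposition \ref{prop_qeps} supply the needed control at those boundaries.

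Estimate \eqref{est_xi_eps} is an immediate corollary of Proposition \ref{prop_qeps}: the limits $p^{\din}_0\in\gamma_1$ and $p^{\dout}_0\in\gamma_2$ both lie in the slice $\{\xi=s\}$ (because $\gamma_1$ and $\gamma_2$ are trajectories of \eqref{fast_u12} with $\xi=s$ fixed), so reading off the $\xi$-coordinate yields $\xi^{\din}_\epsilon,\xi^{\dout}_\epsilon\to s$. For \eqref{int_u_outer}, I will use that $p^{\din}_\epsilon\in\mathcal I_\epsilon\subset W^u_\epsilon(\mathcal U_L)$, so the backward orbit in the $\sigma$-time of \eqref{sf_u12} remains on $W^u_\epsilon(\mathcal U_L)$, and Proposition \ref{prop_ul} supplies $\mathrm{dist}(p^{\din}_\epsilon\cdot\sigma,\mathcal U_L)\le Ce^{\mu\sigma}$ for $\sigma\le 0$. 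Since $u\equiv u_L$ on $\mathcal U_L$ and $\xi=\xi^{\din}_\epsilon+\epsilon\sigma$ along the trajectory, this translates to $|\tilde u_\epsilon(\xi)-u_L|\le Ce^{\mu(\xi-\xi^{\din}_\epsilon)/\epsilon}$ for $\xi\le\xi^{\din}_\epsilon$, whose integral in $\xi$ is of order $O(\epsilon)=o(1)$. The symmetric argument based on $W^s_\epsilon(\mathcal U_R)$ takes care of the $u_R$ tail.

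For \eqref{int_u_inner}, rather than working in the $\tau$-time scale used inside the shock, the cleanest route is to integrate the slow equation $w_\xi=-u$ of \eqref{dafermos_similar_xi} directly, which gives
\[
  \int_{\xi^{\din}_\epsilon}^{\xi^{\dout}_\epsilon}\tilde u_\epsilon(\xi)\,d\xi
  =\tilde w_\epsilon(\xi^{\din}_\epsilon)-\tilde w_\epsilon(\xi^{\dout}_\epsilon).
\]
By Proposition \ref{prop_qeps} the right-hand side converges to $w_L-w_R$, and combining this with $w_{1L}=w_{1R}$ (established in the proof of Proposition \ref{prop_gamma0}) and the definition $e_0=w_{2L}-w_{2R}$ produces the claimed limit $(0,e_0)$. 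The main technical point I expect to have to check is that the Fenichel estimate in Proposition \ref{prop_ul} can genuinely be applied uniformly for all $\sigma\in(-\infty,0]$, so that the outer integral is $O(\epsilon)$ rather than merely $O(1)$; this should follow from the uniform normal hyperbolicity of $\mathcal U_L$ along its entire length together with the compact-family estimate built into Proposition \ref{prop_ul}.
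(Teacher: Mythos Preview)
Your arguments for \eqref{est_xi_eps} and \eqref{int_u_outer} are essentially identical to the paper's: both read off the $\xi$-coordinate of $p^{\din}_0,p^{\dout}_0$ from \eqref{limit_qeps}, and both bound the tails by the Fenichel exponential estimate \eqref{est_dist_ul}--\eqref{est_dist_ur} followed by integrating $Ce^{\mu(\xi-\xi^{\din}_\epsilon)/\epsilon}$ over $(-\infty,\xi^{\din}_\epsilon]$ to get $O(\epsilon)$.

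For \eqref{int_u_inner} you take a genuinely different and shorter route. The paper passes to the desingularized time $\sigma$ of \eqref{deq_brk} and treats the two components separately: for $u_2$ it uses $\dot w_2=-\kappa$ together with $d\xi=\kappa r^2\,d\sigma$ to obtain $\int u_{2}\,d\xi=w_2(p^{\din}_\epsilon)-w_2(p^{\dout}_\epsilon)$, and for $u_1$ it estimates $\int|u_{1}|\,d\xi=\epsilon\int|\beta|\,d\sigma\le C\epsilon(T_{1\epsilon}+T_{2\epsilon})=O(\epsilon\log(1/\epsilon))$ via the time bound \eqref{est_T12eps}. You instead integrate $w_\xi=-u$ once and invoke $w_{1L}=w_{1R}$, which handles both components simultaneously without any reference to the blown-up coordinates or to \eqref{est_T12eps}. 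Your version is correct and more elementary for exactly the statement \eqref{int_u_inner}. The paper's computation, however, yields the stronger conclusion $\int_{\xi^{\din}_\epsilon}^{\xi^{\dout}_\epsilon}|\tilde u_{1\epsilon}|\,d\xi=o(1)$, i.e.\ an $L^1$ bound rather than mere cancellation; this is what one actually needs in the next step (Proposition~\ref{prop_convergence_similar}) to pass the test function $\psi(\xi)$ through the inner integral, since controlling $\int(\psi(\xi)-\psi(s))\tilde u_{1\epsilon}\,d\xi$ requires a bound on $\int|\tilde u_{1\epsilon}|\,d\xi$, not just on $\int\tilde u_{1\epsilon}\,d\xi$. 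So your proof establishes the proposition as stated, but be aware that the paper's extra absolute-value estimate is not wasted effort.
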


\begin{proof}
Note that $s$ is the $\xi$-coordinate of $p^\din_0$.
From the triangular inequality we have  \[
  |\xi^\din_\epsilon-s|
  \le |p^\din_\epsilon-p^\din_0|
 = o(1),
\] where the second inequality follows from \eqref{limit_qeps}.
A similar inequality holds for $\xi^\dout_\epsilon$,
so we obtain \eqref{est_xi_eps}.

Since every point in $\mathcal U_L$ has $u$-coordinate equal to $u_L$, \[
  |\tilde{u}(\xi)-u_L|
  \le \mathrm{dist}\big((\tilde u(\xi),\tilde w(\xi),\xi),\mathcal U_L\big)
  = \mathrm{dist}\big(
    (u_\epsilon^0,w_\epsilon^0,\xi_\epsilon^0)
    \myflow{sf_u}\tfrac{\xi-\xi_\epsilon^0}{\epsilon},
    \mathcal U_L
  \big),
\] where the last equality follows from \eqref{def_utilde}.
Using \eqref{est_dist_ul},
the last term is $\le C\exp\big(\mu\tfrac{\xi-\xi_\epsilon^0}{\epsilon}\big)$.
Since $\xi_\epsilon^\din<\xi_\epsilon^0$, it follows that \[
  \int_{-\infty}^{\xi_\epsilon^\din}|\tilde{u}(\xi)-u_L|\;d\xi
  &\le \int_{-\infty}^{\xi_\epsilon^\din}
    C\exp\big(\mu\tfrac{\xi-\xi_\epsilon^0}{\epsilon}\big)
  \;d\xi
  &\le \int_{-\infty}^{\xi_\epsilon^\din}
    C\exp\big(\mu\tfrac{\xi-\xi_\epsilon^\din}{\epsilon}\big)
  \;d\xi
  = \frac{\epsilon}{\mu} C.
\] 
A similar inequality holds for
$\int_{\xi_\epsilon^\dout}^{\infty}|\tilde{u}(\xi)-u_R|\;d\xi$,
so we obtain \eqref{int_u_outer}.

From the equation for $\dot{\xi}$ in \eqref{deq_brk},
denoting the time variable by $\sigma$,
we can write $\xi=\xi(\sigma)$ by \beq{xi_zeta}
  \xi(0)=\xi^0_\epsilon,\quad
  \tfrac{d\xi}{d\sigma}= \tilde{\kappa}_\epsilon(\xi) \tilde{r}_\epsilon(\xi)^2,
\] From \eqref{link_qeps} we have \beq{xi_T12}
  \xi(-T_{1\epsilon})= \xi^\din_\epsilon,\quad
  \xi(T_{2\epsilon})= \xi^\dout_\epsilon.
\] From \eqref{xi_zeta} and \eqref{xi_T12},
using the equation for $\dot{w}_2$ in \eqref{deq_brk},
it follows that \beq{int_u2_inner}
  &\int_{\xi^\din}^{\xi^\dout}\tilde{u}_{2\epsilon}(\xi)\;d\xi
  = \int_{\xi^\din}^{\xi^\dout}\frac{1}{(\tilde{r}_{\epsilon}(\xi))^2}\;d\xi
  = \int_{-T_{1\epsilon}}^{T_{2\epsilon}}
  \frac{\tilde{\kappa_\epsilon}(\sigma)\tilde{r}_\epsilon(\sigma)^2}
  {\tilde{r}_\epsilon(\sigma)^2}\;d\sigma\\
  &= -\int_{-T_{1\epsilon}}^{T_{2\epsilon}}\dot{\tilde{w}}_{2\epsilon}(\sigma)\;d\sigma
  = w_2(p^\dout_\epsilon)- w_2(p^\din_\epsilon)
  = w_{2L}- w_{2R}+ o(1)
\] where $w_2(p)$ denotes the $w_2$-coordinate of $p$,
and that \beq{int_u1_inner}
  &\int_{\xi^\din}^{\xi^\dout}|\tilde{u}_{1\epsilon}(\xi)|\;d\xi
  = \int_{\xi^\din}^{\xi^\dout}
  \frac{|\tilde{\beta_\epsilon}(\xi)|}{\tilde{r}_\epsilon(\xi)}\;d\xi
  = \int_{-T_{1\epsilon}}^{T_{2\epsilon}}
  |\tilde{\beta}_\epsilon(\sigma)|\tilde{\kappa}_\epsilon(\sigma)\tilde{r}_\epsilon(\sigma)\;d\sigma\\
  &= \epsilon\int_{-T_{1\epsilon}}^{T_{2\epsilon}}
  |\tilde{\beta}_\epsilon(\sigma)|\;d\sigma
  \le C \epsilon(T_{1\epsilon}+T_{2\epsilon})
  \le \tilde{C}(\epsilon\log\tfrac1\epsilon)
  = o(1),
\] where the last inequality follows from \eqref{est_T12eps}.
Now \eqref{int_u2_inner} and \eqref{int_u1_inner} give \eqref{int_u_inner}.
\end{proof}

The remaining part of this section is analogous
to that in Section \ref{TwoPhase_subsec_convergence},
so we only sketch the proofs briefly.

\begin{prop}\label{prop_convergence_similar}
Let $\tilde{u}_\epsilon= (\tilde{u}_{1\epsilon},\tilde{u}_{2\epsilon})$
be the solution of \eqref{dafermos_similar} and \eqref{bc_u_infty}
given in Proposition \ref{prop_existence}. Then \beq{limit_utilde}
  \tilde{u}_\epsilon
  \rightharpoonup
  u_L+ (u_R-u_L)\mathrm{H}(\xi-s)+ (0,e_0)\delta(\xi-s)
\] in the sense of distributions as $\epsilon\to 0$.
\end{prop}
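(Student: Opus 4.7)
The plan is to test $\tilde u_\epsilon$ against an arbitrary $\varphi\in C_c^\infty(\mathbb R)$ and split the integral $\int_{\mathbb R}\tilde u_\epsilon\varphi\,d\xi$ at the two endpoints $\xi_\epsilon^\din<\xi_\epsilon^\dout$ supplied by Proposition \ref{prop_qeps}, analyzing the three resulting pieces separately. By \eqref{est_xi_eps} the middle window collapses onto $\{s\}$ as $\epsilon\to 0$, which is where the delta must concentrate.

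On the left outer interval I would write
\[
  \int_{-\infty}^{\xi_\epsilon^\din}\tilde u_\epsilon\varphi\,d\xi
  = u_L\int_{-\infty}^{\xi_\epsilon^\din}\varphi\,d\xi
  + \int_{-\infty}^{\xi_\epsilon^\din}(\tilde u_\epsilon-u_L)\varphi\,d\xi.
\]
The error term is bounded by $\|\varphi\|_\infty$ times the left piece in \eqref{int_u_outer}, hence $o(1)$; the main term converges to $u_L\int_{-\infty}^s\varphi\,d\xi$ by \eqref{est_xi_eps} and dominated convergence. The right outer interval is handled by the same template with $u_R$.

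For the middle interval I would decompose $\varphi(\xi)=\varphi(s)+(\varphi(\xi)-\varphi(s))$. The $\varphi(s)$ contribution is $\varphi(s)\cdot\bigl((0,e_0)+o(1)\bigr)$ by \eqref{int_u_inner}, which produces the delta-term. For the remainder, the first component is dominated by $2\|\varphi\|_\infty\int_{\xi_\epsilon^\din}^{\xi_\epsilon^\dout}|\tilde u_{1\epsilon}|\,d\xi=o(1)$ via \eqref{int_u1_inner}; for the second component, using the positivity $\tilde u_{2\epsilon}=1/\tilde r_\epsilon^2\ge 0$, I would factor out $\sup_{\xi\in[\xi_\epsilon^\din,\xi_\epsilon^\dout]}|\varphi(\xi)-\varphi(s)|$, which is $o(1)$ by uniform continuity of $\varphi$ combined with the shrinking of the window, while the remaining integral is $e_0+o(1)$ again by \eqref{int_u_inner}. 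Summing the three pieces reproduces exactly the right-hand side of \eqref{limit_utilde}.

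The main point requiring care is the $\tilde u_{2\epsilon}$ contribution on the middle interval: this function blows up at rate $\epsilon^{-2}$ by \eqref{est_u2max}, so the argument is essentially an exact cancellation between unbounded pointwise growth and a vanishing interval length. Positivity of $\tilde u_{2\epsilon}$ is what permits the clean sup-times-integral factorization; without it, finer pointwise estimates on $\tilde u_{2\epsilon}$ would be required.
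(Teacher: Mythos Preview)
Your proposal is correct and is precisely the detailed unpacking of the paper's own proof, which merely says ``using Proposition~\ref{prop_est_utilde} it can be readily seen that'' the pairing equals the right-hand side. Your three-interval split with the $\varphi(\xi)=\varphi(s)+(\varphi(\xi)-\varphi(s))$ decomposition on the middle window, together with the positivity of $\tilde u_{2\epsilon}$ to control the remainder there, is exactly how one fills in that sketch.
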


\begin{proof}
Given any smooth function $\psi\in C^\infty_c(\mathbb R)$ with compact support,
using Proposition \ref{prop_est_utilde} it can be readily seen that
\[
  \int_{-\infty}^\infty \psi(\xi)\tilde{u}_\epsilon(\xi)\;d\xi
  = u_L\int_{-\infty}^{s}\psi(\xi)\;d\xi
  +u_R\int_{s}^\infty \psi(\xi)\,d\xi
  + (0,e_0)\psi(s)
  + o(1).
\]
This holds for all $\psi$, so \eqref{limit_utilde} holds.
\end{proof}

\begin{prop}\label{prop_convergence}
Let $\tilde{u}_\epsilon= (\tilde{u}_{1\epsilon},\tilde{u}_{2\epsilon})$
be the solution of \eqref{dafermos_similar} and \eqref{bc_u_infty}
given in Proposition \ref{prop_existence}.
Let $u_\epsilon(x,t)= \tilde{u}_\epsilon(x/t)$.
Then the weak convergence \eqref{limit_u12} holds.
\end{prop}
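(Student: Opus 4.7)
The plan is to deduce the planar convergence from the self-similar convergence already established in Proposition~\ref{prop_convergence_similar}, via the single change of variables $\xi = x/t$. Fix a test function $\varphi \in C_c^\infty(\mathbb{R}\times(0,\infty))$ with support contained in some strip $[-X,X]\times[t_1,t_2]$ with $t_1>0$. For each fixed $t>0$, substitute $x=\xi t$ in the inner integral (so $dx = t\,d\xi$) to rewrite
\[
\iint \varphi(x,t)\,u_\epsilon(x,t)\,dx\,dt \;=\; \int_{-\infty}^\infty \tilde u_\epsilon(\xi)\,\Phi(\xi)\,d\xi,
\]
where
\[
\Phi(\xi) \;:=\; \int_0^\infty t\,\varphi(\xi t,\,t)\,dt.
\]
Because $\varphi$ is supported in $|x|\le X$ with $t\ge t_1>0$, one has $\Phi(\xi)=0$ whenever $|\xi|>X/t_1$, so $\Phi\in C_c^\infty(\mathbb{R})$ and Proposition~\ref{prop_convergence_similar} applies.

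Applying that proposition gives
\[
\int \tilde u_\epsilon(\xi)\,\Phi(\xi)\,d\xi \;\longrightarrow\; u_L\int_{-\infty}^s\Phi\,d\xi + u_R\int_s^\infty\Phi\,d\xi + (0,e_0)\,\Phi(s).
\]
It remains to reinterpret each of these three terms as a pairing of $\varphi$ with the right-hand side of \eqref{limit_u12}. Reversing the substitution in the first two integrals yields $\int_{-\infty}^s\Phi\,d\xi = \iint\bigl(1-\mathrm{H}(x-st)\bigr)\varphi\,dx\,dt$ and $\int_s^\infty\Phi\,d\xi = \iint \mathrm{H}(x-st)\varphi\,dx\,dt$, which together give the step-function part $\iint[u_L+(u_R-u_L)\mathrm{H}(x-st)]\varphi\,dx\,dt$. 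For the point mass, one computes directly
\[
(0,e_0)\,\Phi(s) \;=\; (0,e_0)\int_0^\infty t\,\varphi(st,t)\,dt,
\]
which by the normalization \eqref{def_delta} equals $\bigl\langle (0,e_0/\sqrt{1+s^2})\,t\,\delta_{\{x=st\}},\,\varphi\bigr\rangle$, exactly the delta-function contribution in \eqref{limit_u2}.

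There is no genuinely hard step here: once Proposition~\ref{prop_convergence_similar} is in hand, the argument is essentially algebraic. The only subtle point is the arc-length factor $\sqrt{1+s^2}$ in the definition \eqref{def_delta}; it cancels against the reciprocal weight $1/\sqrt{1+s^2}$ appearing in \eqref{limit_u2}, which is precisely why the Main Theorem states the singular coefficient in that form. The lone technical observation one must make along the way is that $t_1>0$ is essential for $\Phi$ to have compact $\xi$-support, which is harmless since $u_\epsilon(x,t)$ is defined only for $t>0$ and the convergence \eqref{limit_u12} is naturally understood in $\mathcal{D}'(\mathbb{R}\times(0,\infty))$.
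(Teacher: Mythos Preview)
Your proof is correct and follows essentially the same route as the paper: change variables $x=\xi t$ and invoke Proposition~\ref{prop_convergence_similar}. The only cosmetic difference is that you first apply Fubini to collapse the $t$-integral into a single test function $\Phi(\xi)$ before taking the limit, whereas the paper's sketch keeps the $t$-integral outside and applies the self-similar convergence slice-by-slice; your ordering is arguably cleaner since it sidesteps any question of interchanging $\lim_{\epsilon\to 0}$ with $\int_0^\infty dt$.
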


\begin{proof}
Given any $\varphi\in C^\infty_c(\mathbb R\times \mathbb R_+)$,
using Proposition \ref{prop_convergence_similar} we have \[
  &\int_0^\infty\int_{-\infty}^\infty \varphi(x,t)u_\epsilon(x,t)\,dx\,dt
  =\int_0^\infty t\int_{-\infty}^\infty \varphi(t\xi,t)\tilde{u}_\epsilon(\xi)\,d\xi\,dt\\
  &=\int_0^\infty t\left\{
      u_L\int_{-\infty}^s \varphi(t\xi,t)\,d\xi
      + (0,e_0)\varphi(st,t)
      + u_R\int_s^\infty \varphi(t\xi,t)\,d\xi
  \right\}dt+ o(1)\\
  &= u_L\int_0^\infty\!\!\! \int_{-\infty}^{st}\varphi(x,t)\,dx\,dt
  +u_R\int_0^\infty\!\!\!\! \int_{st}^{\infty}\varphi(x,t)\,dx\,dt
  +(0,e_0)\int_0^\infty t\varphi(st,t)\,dt
  +o(1).
\] 
By the definition \eqref{def_delta},
this implies \eqref{limit_u12}.
\end{proof}

Proposition \ref{prop_existence} and \ref{prop_convergence}
complete the proof of the Main Theorem.

%%%%%%%%%%%%%%%%%%%%%%%%%%%%%%%%%%%
\section*{acknowledgements}
%%%%%%%%%%%%%%%%%%%%%%%%%%%%%%%%%%%
The author would express his gratitude to his advisor,
Professor Babara L.\ Keyfitz,
without whose help this work would not have been possible.

%\bibliographystyle{abbrv}
%\bibliographystyle{alpha}
%\bibliography{bibfile}

\end{document}